\documentclass[12pt,reqno]{amsart}
\usepackage{amsmath, amsfonts, amssymb, amsthm, amscd, amsbsy}
\usepackage{fancyhdr}
\usepackage[usenames,dvipsnames,svgnames,x11names,hyperref]{xcolor}
\usepackage{geometry}
\usepackage{graphicx}
\usepackage[pagebackref]{hyperref}%
\usepackage{amsmath}%
\setcounter{MaxMatrixCols}{30}%
\usepackage{amsfonts}%
\usepackage{amssymb}
\usepackage{enumerate}

\setlength{\textwidth}{6.4in}
\setlength{\oddsidemargin}{0.2in}
\setlength{\evensidemargin}{0.2in}
\setlength{\textheight}{9in}
\setlength{\topmargin}{-0.2in}
\setlength{\headheight}{0in}
\setlength{\headsep}{.4in}
\setlength{\footskip}{.5in}

\providecommand{\U}[1]{\protect\rule{.1in}{.1in}}
\hypersetup{backref=true,
pagebackref=true,
hyperindex=true,
colorlinks=true,
breaklinks=true,
urlcolor=NavyBlue,
linkcolor=Fuchsia,
bookmarks=true,
bookmarksopen=false,
filecolor=black,
citecolor=ForestGreen,
linkbordercolor=red
}

\allowdisplaybreaks

%

\newtheorem{theorem}{Theorem}[section]
\theoremstyle{plain}

\newtheorem{corollary}{Corollary}[section]

\newtheorem{lemma}{Lemma}[section]

\newtheorem{proposition}{Proposition}
\newtheorem{remark}{Remark}

\numberwithin{equation}{section}









\def\vint{\mathop{\mathchoice%
          {\setbox0\hbox{$\displaystyle\intop$}\kern 0.22\wd0%
           \vcenter{\hrule width 0.6\wd0}\kern -0.82\wd0}%
          {\setbox0\hbox{$\textstyle\intop$}\kern 0.2\wd0%
           \vcenter{\hrule width 0.6\wd0}\kern -0.8\wd0}%
          {\setbox0\hbox{$\scriptstyle\intop$}\kern 0.2\wd0%
           \vcenter{\hrule width 0.6\wd0}\kern -0.8\wd0}%
          {\setbox0\hbox{$\scriptscriptstyle\intop$}\kern 0.2\wd0%
           \vcenter{\hrule width 0.6\wd0}\kern -0.8\wd0}}%
          \mathopen{}\int}



\begin{document}

\title[Optimal  bounds for   stability of HLS and Sobolev inequalities]{Optimal  stability of Hardy-Littlewood-Sobolev and Sobolev inequalities of arbitrary orders with dimension-dependent constants}
\author{Lu Chen}
\address[Lu Chen]{Key Laboratory of Algebraic Lie Theory and Analysis of Ministry of Education, School of Mathematics and Statistics, Beijing Institute of Technology, Beijing
100081, PR China}
\email{chenlu5818804@163.com}

\author{Guozhen Lu}
\address[Guozhen Lu]{Department of Mathematics, University of Connecticut, Storrs, CT 06269, USA}
\email{guozhen.lu@uconn.edu}

\author{Hanli Tang}
\address[Hanli Tang]{Laboratory of Mathematics and Complex Systems (Ministry of Education), School of Mathematical Sciences, Beijing Normal University, Beijing, 100875, China}
\email{hltang@bnu.edu.cn}
\address{}
\keywords{Stability of Sobolev inequality, Optimal asymptotic lower bound, Stability of HLS inequality, }
\thanks{The first author was partly supported by the National Key Research and Development Program (No.
2022YFA1006900) and National Natural Science Foundation of China (No. 12271027). The second author was partly supported by a Simons grant and a Simons Fellowship from the Simons Foundation.  The third author
was partly supported by National Key Research and Development Program (No. 2020YFA0712900) and the Fundamental Research Funds for the Central Universities(2233300008)}


\begin{abstract}Recently,
Dolbeault-Esteban-Figalli-Frank-Loss \cite{DEFFL}   established the optimal stability of the first-order $L^2$-Sobolev inequality  with dimension-dependent constant. Subsequently,   Chen-Lu-Tang \cite{CLT2} obtained  the optimal stability for the $L^2$ fractional Sobolev inequality of order $s$ when $0<s<1$ with dimension-dependent  and order-dependent constants.     The purpose of this paper is to consider the remaining unsolved case $1<s<\frac{n}{2}$. Our strategy is to  first establish the optimal stability for the HLS inequality directly without using the stability of the Sobolev inequality. The main difficulty lies in establishing the optimal local stability of HLS inequality  when $1<s<\frac{n}{2}$.
The loss of the Hilbert structure of the distance appearing in the stability of the HLS inequality brings much challenge in establishing the desired stability. To achieve our goal, we develop a new strategy based on the $H^{-s}-$decomposition instead of $L^{\frac{2n}{n+2s}}-$decomposition to obtain the local stability of the HLS inequality with $L^{\frac{2n}{n+2s}}-$distance.  However,  new difficulties arise   to deduce the global stability from the local stability  because of the non-uniqueness  and non-continuity of $\|r\|_{\frac{2n}{n+2s}}$ for the rearrangement flow.  This is overcome by the norm comparison theorem for $\|r\|_{\frac{2n}{n+2s}}$ and ``new continuity" theorem for the rearrangement flow  (see Lemma \ref{comparable}, Lemma \ref{continuity} and Lemma \ref{nonlocal stability}).  

As an important application of the optimal stability of the HLS inequality together with the duality theory of the  stability developed initially by Carlen \cite{Car} and further improved in \cite{CLT1}, we  deduce the optimal stability of  the $L^2$-Sobolev inequality of order s when $1\le s<\frac{n}{2}$ and the non-Hilbertian $L^{\frac{2n}{n+2s}}$-Sobolev inequality with the dimension-dependent constants. As another application,  we can derive the optimal stability of Beckner's \cite{Beckner1} restrictive Sobolev inequality on the flat sub-manifold $\mathbb{R}^{n-1}$ and the sphere $\mathbb{S}^{n-1}$ with dimension-dependent constants. 

\end{abstract}

\maketitle
\section{Introduction}
\vskip0.3cm
Lieb's sharp form \cite{Lieb} of the Hardy-Littlewood-Sobolev inequality in $\mathbb{R}^n$  for $0<s<\frac {n}{2}$  states
\begin{equation}
	\label{eq-hls}
	\| G\|_{\frac{2n}{n+2s}}^2\geq \mathcal S_{s,n}\left \|(-\Delta)^{-s/2} G \right\|_2^2
	\qquad\text{for all}\ G\in L^{\frac{2n}{n+2s}}(\mathbb{R}^n)
\end{equation}
with
\begin{equation}
	\label{eq:sobconst}
	\mathcal S_{s,n} = (4\pi)^s \ \frac{\Gamma(\frac{n+2s}{2})}{\Gamma(\frac{n-2s}{2})} \left( \frac{\Gamma(\frac n2)}{\Gamma(n)} \right)^{2s/n}
	= \frac{\Gamma(\frac{n+2s}{2})}{\Gamma(\frac{n-2s}{2})} \ |\mathbb{S}^n|^{2s/n} \,
\end{equation}
being the sharp constant. Furthermore, he also showed that the equality of HLS inequality \eqref{eq-hls} holds if and only if
$$G\in \mathcal{M}_{HLS}:=\left\{cF(\frac{\cdot-x_0}{a}): c\in \mathbb{R},\  x_0\in \mathbb{R}^n,\  a>0 \right\},$$
where $F(x)=(1+|x|^2)^{-\frac{n+2s}{2}}$.

Lieb in \cite{Lieb} also proved the sharp Sobolev inequality in $\mathbb{R}^n$ as an equivalent reformulation of the sharp Hardy-Littlewood-Sobolev inequality in the case of conformal index. Specifically, he proved for $0<s<\frac {n}{2}$ there holds
\begin{equation}
	\label{eq-sob}
	\left\|(-\Delta)^{s/2} U \right\|_2^2 \geq \mathcal S_{s,n} \| U\|_{\frac{2n}{n-2s}}^2
	\qquad\text{for all}\ U\in \dot H^s(\mathbb{R}^n)
\end{equation}
where $\dot H^s(\mathbb{R}^n)$ denotes the completion of $C_{c}^{\infty}(\mathbb{R}^n)$ under the norm $\big(\int_{\mathbb{R}^n}|(-\Delta)^{\frac{s}{2}}U|^2dx\big)^{\frac{1}{2}}$.
The sharp constant $\mathcal S_{s,n}$ of inequality (\ref{eq-sob}) has been computed first by Rosen \cite{Ro} in the case $s=1$, $n=3$ and then independently by Aubin \cite{Au} and Talenti \cite{Ta} in the case $s=1$. Furthermore, Lieb also showed that the equality of Sobolev inequality \eqref{eq-sob} holds if and only if
$$U\in \mathcal{M}_s:=\left\{cV(\frac{\cdot-x_0}{a}): c\in \mathbb{R},\  x_0\in \mathbb{R}^n,\  a>0 \right\},$$
where $V(x)=(1+|x|^2)^{-\frac{n-2s}{2}}$.
\vskip0.1cm

The stability of Sobolev inequality started from Brezis and Lieb. In \cite{BrLi} they asked if the following refined first order Sobolev inequality ($s=1$ in (\ref{eq-sob}))
holds for some distance function $d$:
$$\left\|(-\Delta)^{1/2} U \right\|_2^2 - \mathcal S_{1,n} \| U\|_{\frac{2n}{n-2}}^2\geq c d^{2}(U, \mathcal{M}_1).$$
This question was answered affirmatively  in a pioneering work by Bianchi and Egnell \cite{BiEg}, in the case $s=2$ by the second author and Wei \cite{LuWe} and in the case of any positive even integer $s<n/2$ by
Bartsch, Weth and Willem \cite{BaWeWi}. In 2013, Chen, Frank and Weth \cite{ChFrWe} established the stability of Sobolev inequality for all $0<s<n/2$. They proved that
\begin{equation}\label{Sob sta ine}
\left\|(-\Delta)^{s/2} U \right\|_2^2 - \mathcal S_{s,n} \| U\|_{\frac{2n}{n-2s}}^2\geq C_{n,s} d^{2}(U, \mathcal{M}_s),
\end{equation}
for all $U\in \dot H^s(\mathbb{R}^n)$, where $C_{n,s}>0$ and $d(U,\mathcal{M}_s)=\min\{\|(-\Delta)^{s/2}(U-\phi)\|_{L^2}:\phi \in \mathcal{M}_s\}$.
\vskip0.1cm

Although the fractional Sobolev inequality is equivalent to the HLS inequality of diagonal case, their stabilities are not fully equivalent. Carlen \cite{Car} developed a duality theory of  stability  based on the quantitative convexity and deduced the following stability of HLS inequality from the stability of fractional Sobolev inequality: There exists some positive constant $C_0(n,s)$ such that for any $G\in L^{\frac{2n}{n+2s}}(\mathbb{R}^n)$, there holds
$$\|G\|^2_{\frac{2n}{n+2s}}-S_{n,s}\|(-\Delta)^{-s/2}G\|_2^2\geq C_0(n,s)\inf\limits_{H\in \mathcal{M}_{HLS}}\|G-H\|^2_{\frac{2n}{n+2s}}.$$
For the study of stability of other kinds of functional
and geometric inequalities such as $L^p$-Sobolev inequality, Log-Sobolev inequality, Moser-Onofri inequality, Sobolev trace inequality, Gagliardo-Nirenberg-Sobolev inequality, Caffarelli-Kohn-Nirenberg identities, Brunn-Minkowski inequality etc., we refer the interested readers to the papers \cite{CFMP}, \cite{FiNe}, \cite{FiZh}, \cite{CLT}, \cite{CLT2}, \cite{CLT1}, \cite{BDNS}, \cite{CaF}, \cite{CFLL},
\cite{DFLL}, \cite{JF1}, \cite{JF2} and references therein.
\vskip0.1cm

So far the stability of Sobolev inequality is proved by establishing the local stability
of Sobolev inequality based on the spectrum analysis of elliptic or high order elliptic operator and using the Lions' concentration compactness technique to obtain global stability of Sobolev inequality. However, this method
does not tell us more information about the constant $C_{n,s}$ except it is positive.
Recently, Dolbeault, Esteban, Figalli, Frank and Loss in \cite{DEFFL} obtained for the first time the optimal asymptotic behavior for the lower bound $C_{n,s}$ when $s=1$. In fact, they proved

\vskip0.3cm
\textbf{Theorem A. }
\textit{There is an explicit constant} $\beta>0$ \textit{such that for all} $n\geq 3$ \textit{and for all} $U\in \dot H^1(\mathbb{R}^n)$, \textit{there holds}
$$\left\|\nabla U \right\|_2^2 - \mathcal S_{1,n} \| U\|_{\frac{2n}{n-2}}^2\geq \frac{\beta}{n}d^2(U,\mathcal{M}_1).$$
\vskip0.3cm

As an application, they also established the stability for Gaussian log-Sobolev inequality. The strategy adopted  by the authors of \cite{DEFFL} is to establish the local stability with explicit and optimal remainder term in the first step, where spherical harmonics and the technique of complicated orthogonality is used. In the second step, they prove the stability for nonnegative functions away from the minimizer set $\mathcal{M}_1$ by the competing symmetries method developed in \cite{CaLo} and a rearrangement flow of the $L^2$ integral of the gradient. At last they finish their proof by a concavity argument.
\vskip0.1cm

Inspired by the work of Dolbeault, Esteban, Figalli, Frank and Loss \cite{DEFFL}, the current authors in \cite{CLT1}  extended  their  result to general $0<s<\frac{n}{2}$ with the explicit lower bounds. Furthermore, the authors obtained \cite{CLT2}   the optimal asymptotic lower bounds when the dimension $n\rightarrow \infty$ for $0<s<1$ and when $s\rightarrow 0$ for all dimension $n$. However, it should be noted that the explicit lower bound obtained in \cite{CLT1} is not optimal in the asymptotic sense when $n\rightarrow +\infty$ for fractional order $1<s<\frac{n}{2}$. Therefore, the optimal asymptotic lower bound
when $n\rightarrow +\infty$ for fractional order $1<s<\frac{n}{2}$ remains to be unknown.
\vskip0.1cm

 We now recall briefly how we accomplish our goal in \cite{CLT1, CLT2}. More specifically, we first establish the local stability of Sobolev inequality for nonnegative function with explicit lower bound for general $0<s<\frac{n}{2}$, or with optimal lower bounds for $0<s<1$. Then by a duality method developed by Carlen \cite{Car}, we obtain the local stability of HLS inequality with  explicit  lower bounds for all $0<s<\frac{n}{2}$ and optimal lower bounds  for $0<s<1$. Then, using the competing symmetries method, a rearrangement flow of HLS integral which is effective for all $0<s<n/2$ and a concavity argument, we can establish the stability of HLS inequality with explicit  lower bounds for $0<s<\frac{n}{2}$ and optimal lower bounds when $n\to \infty$ for $0<s<1$. Finally, by the duality method again, we can get the global stability of the Sobolev inequality with explicit lower bounds for $0<s<\frac{n}{2}$ or optimal lower bounds for $0<s<1$.
\vskip0.1cm

In this paper, our goal is to establish both the stability of HLS inequality and the stability of the  Sobolev inequaly with the optimal asymptotic lower bound for all $1\leq s<n/2$.
The first result of this paper is to establish the optimal
asymptotic lower bound for stability of the HLS inequality when $n\rightarrow \infty$ for $1\leq s<n/2$.

\vskip0.3cm
\begin{theorem}\label{sta of hls}
For any fixed $s\in[1,n/2)$, there exists a positive constant $\beta_s$ such that for any $g\in L^{\frac{2n}{n+2s}}(\mathbb{R}^n)$, there holds
$$\|g\|^2_{\frac{2n}{n+2s}}-\mathcal{S}_{s,n}\|(-\Delta)^{-s/2}g\|_2^2\geq \frac{\beta_s}{n}\inf_{h\in \mathcal{M}_{HLS}}\|g-h\|^2_{\frac{2n}{n+2s}}.$$
\end{theorem}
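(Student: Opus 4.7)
The plan is to prove Theorem \ref{sta of hls} in two stages: first an optimal local stability estimate in an $L^{\frac{2n}{n+2s}}$-neighborhood of the minimizer manifold $\mathcal{M}_{HLS}$, and then a globalization step combining the competing symmetries iteration of Carlen-Loss with a rearrangement flow, closed off by a concavity argument in the spirit of \cite{DEFFL} and \cite{CLT2}.

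For the local step, after the usual scaling and translation normalization I would write $g = F + r$ with $F(x)=(1+|x|^2)^{-(n+2s)/2}$. The principal obstruction is that $L^{\frac{2n}{n+2s}}$ is not Hilbertian for $s>0$, so a direct Bianchi-Egnell style spectral analysis in this norm is unavailable. Following the strategy advertised in the introduction, I would instead decompose $r$ with respect to the Hilbertian $H^{-s}$ inner product induced by $\|(-\Delta)^{-s/2}\cdot\|_2^2$: split $r$ into its $H^{-s}$-projection onto the tangent space of $\mathcal{M}_{HLS}$ at $F$ (spanned by $F$ itself, the translation directions, and the dilation direction) plus an orthogonal remainder. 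Stereographic projection to $\mathbb{S}^n$ diagonalises the Riesz potential through the Funk-Hecke formula in spherical harmonics, and after removing the first few modes corresponding to the tangent directions one obtains a spectral gap with the sharp $1/n$-dependence. A quantitative norm comparison between $\|r\|_{H^{-s}}$ and $\|r\|_{\frac{2n}{n+2s}}$ on the orthogonal complement, together with a careful Taylor expansion of the $L^{\frac{2n}{n+2s}}$-norm at $F$ to bound the cubic and higher remainders (which requires more care for $1<s<n/2$ than for $0<s<1$), converts this spectral estimate into the desired local bound $\mathrm{deficit}(g)\ge \frac{\beta_s}{n}\|r\|_{\frac{2n}{n+2s}}^2$ for $r$ sufficiently small.

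For the global step, I would first reduce to nonnegative functions, since replacing $g$ by $|g|$ only decreases the deficit while leaving the distance to $\mathcal{M}_{HLS}$ under control. Applying the competing symmetries iteration to $|g|$ produces a sequence along which the HLS deficit is monotonically nonincreasing and which converges, after a suitable conformal adjustment, to the minimizer $F$. To bridge the regime where the iterates remain far from $\mathcal{M}_{HLS}$ with the local neighborhood handled in step one, I would use a rearrangement flow interpolating $g$ and its symmetric decreasing rearrangement, quantitatively tracking both the deficit and the $L^{\frac{2n}{n+2s}}$-distance to $\mathcal{M}_{HLS}$ along the flow. A concavity argument analogous to that of \cite{DEFFL} then propagates the local optimal constant $\beta_s/n$ to a global one.

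The main obstacle is the one foreshadowed by the authors in the abstract. In the local step, the absence of a Hilbert structure on $L^{\frac{2n}{n+2s}}$ means that the $H^{-s}$-decomposition is only a surrogate, and the passage between $H^{-s}$-distance and $L^{\frac{2n}{n+2s}}$-distance must be sharp enough to preserve the $1/n$ asymptotic rather than introducing a dimension-dependent loss. In the global step, the quantity $\|r\|_{\frac{2n}{n+2s}}$ with $r$ ranging over $g_t - \mathcal{M}_{HLS}$ along the rearrangement flow may be neither realised by a unique closest point nor continuous in the flow parameter, unlike the $L^2$-distance in the first-order case of \cite{DEFFL}. Overcoming this requires precisely the norm comparison theorem and the ``new continuity'' theorem for the rearrangement flow that the authors announce; carrying the local optimal rate through these ingredients and through the concavity step without losing the factor $1/n$ is the heart of the proof.
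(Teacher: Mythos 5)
Your global step is essentially the paper's: reduce to nonnegative functions, iterate the Carlen--Loss competing symmetries, interpolate via the continuous rearrangement flow, and close with a concavity argument, with the norm-comparison lemma (Lemma \ref{comparable}) and the subsequential continuity lemma (Lemma \ref{continuity}) supplying exactly the substitutes for uniqueness and continuity of $\|r_\tau\|_{\frac{2n}{n+2s}}$ that you flag. Likewise, the $H^{-s}$-decomposition and the identification of the tangent space of $\mathcal{M}_{HLS}$ with the degree $0$ and $1$ spherical-harmonic modes, giving the orthogonality constraints and the Funk--Hecke gap $1-A_{n,s}(2)\sim 4s/n$, are what the paper does.

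The genuine gap is in the conversion step of your local estimate. You propose a ``quantitative norm comparison between $\|r\|_{H^{-s}}$ and $\|r\|_{\frac{2n}{n+2s}}$ on the orthogonal complement'' so that the $H^{-s}$-spectral gap upgrades to a bound in $\|r\|_{\frac{2n}{n+2s}}^2$. Such a comparison cannot hold in the direction you need: the HLS inequality gives $\|r\|_{H^{-s}}\lesssim\|r\|_{\frac{2n}{n+2s}}$, and there is no reverse estimate on the infinite-dimensional orthogonal complement, so a lower bound for the deficit in terms of $\|r\|_{H^{-s}}^2$ does not upgrade to one in terms of $\|r\|_{\frac{2n}{n+2s}}^2$. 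The paper never proves a spectral gap in $H^{-s}$ and never compares the two norms of a single $r$ (its Lemma \ref{comparable} compares $\|r_1\|_{\frac{2n}{n+2s}}$ and $\|r_2\|_{\frac{2n}{n+2s}}$ for two different nearest points, which is a different matter). Instead it cuts $r$ at two heights, $r_1=\min\{r,\gamma\}$, $r_2=\min\{(r-\gamma)_+,M-\gamma\}$, $r_3=(r-M)_+$, proves a sharp pointwise Taylor-type inequality for $(1+r)^p-1-pr$ in terms of $r_1,r_2,r_3$ (Lemma \ref{q-estimate}), and splits the bilinear form $\langle\mathcal{P}_{2s}r,r\rangle$ compatibly (Lemma \ref{lem split}). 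The low and middle pieces $r_1,r_2$ are then controlled by $L^2$ Funk--Hecke spectral estimates (where the quadratic form \emph{is} Hilbertian), while the high piece $r_3$ is controlled directly by the sharp HLS inequality in $L^{\frac{2n}{n+2s}}$. This piecewise, genuinely non-Hilbertian bookkeeping, not a norm comparison, is what yields the final lower bound $\frac{C_s}{n}\|r\|_{\frac{2n}{n+2s}}^2$; without it your local step does not close.
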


\begin{remark}
 We only prove that Theorem \ref{sta of hls} holds when $n\geq N_s$. However, when $n\leq N_s$, we can apply the lower bound estimate for stability of HLS obtained in \cite{CLT1} to get the lower bound $\beta_{s}$ independent of $n$.
 \end{remark}
Let $(-\Delta)^{-\frac{s}{2}}g=u$, the above optimal stability of HLS inequality can lead to the stability of $L^{\frac{2n}{n+2s}}$-Sobolev inequality for high-order derivatives.

\begin{corollary}
For any fixed $s\in[1,n/2)$, there exists a positive constant $\beta_s$ such that for any $u\in W^{s, \frac{2n}{n+2s}}(\mathbb{R}^n)$, there holds
$$\|(-\Delta)^{\frac{s}{2}}u\|^2_{\frac{2n}{n+2s}}-\mathcal{S}_{s,n}\|u\|_2^2\geq \frac{\beta_s}{n}\inf_{h\in \widetilde{\mathcal{M}_{HLS}}}\|(-\Delta)^{\frac{s}{2}}(u-h)\|^2_{\frac{2n}{n+2s}},$$
where $\widetilde{\mathcal{M}_{HLS}}$ denotes all extremal functions of $L^{\frac{2n}{n+2s}}$-Sobolev inequality for high-order derivatives.
\end{corollary}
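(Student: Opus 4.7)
The plan is to obtain the corollary as an essentially mechanical consequence of Theorem~\ref{sta of hls} via the Riesz-potential substitution $g = (-\Delta)^{s/2}u$. Since $u\in W^{s,\frac{2n}{n+2s}}(\mathbb{R}^n)$, the image $g$ lies in $L^{\frac{2n}{n+2s}}(\mathbb{R}^n)$, and conversely for any $g\in L^{\frac{2n}{n+2s}}$ the function $u:=(-\Delta)^{-s/2}g$ is well-defined and belongs to the relevant Sobolev space (with $\|u\|_2<\infty$ guaranteed by the HLS inequality \eqref{eq-hls}). Thus the map $u\mapsto g=(-\Delta)^{s/2}u$ is a bijection between the natural function classes in the two inequalities.

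Under this substitution, the left-hand side of the claimed inequality reads
\begin{equation*}
\|(-\Delta)^{s/2}u\|_{\frac{2n}{n+2s}}^2 - \mathcal S_{s,n}\|u\|_2^2 \;=\; \|g\|_{\frac{2n}{n+2s}}^2 - \mathcal S_{s,n}\|(-\Delta)^{-s/2}g\|_2^2,
\end{equation*}
which is exactly the HLS deficit to which Theorem~\ref{sta of hls} applies. Therefore the left-hand side is bounded below by $\frac{\beta_s}{n}\inf_{h\in\mathcal M_{HLS}}\|g-h\|_{\frac{2n}{n+2s}}^2$.

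It remains to identify this infimum with the Sobolev-type distance appearing in the corollary. The extremal manifold for the high-order $L^{\frac{2n}{n+2s}}$-Sobolev inequality is $\widetilde{\mathcal M_{HLS}} = (-\Delta)^{-s/2}\mathcal M_{HLS}$; in particular $\tilde h\in\widetilde{\mathcal M_{HLS}}$ if and only if $h:=(-\Delta)^{s/2}\tilde h\in \mathcal M_{HLS}$, and this correspondence is bijective. Under it,
\begin{equation*}
\|g-h\|_{\frac{2n}{n+2s}}^2 = \|(-\Delta)^{s/2}(u-\tilde h)\|_{\frac{2n}{n+2s}}^2,
\end{equation*}
so that
\begin{equation*}
\inf_{h\in\mathcal M_{HLS}}\|g-h\|_{\frac{2n}{n+2s}}^2 = \inf_{\tilde h\in\widetilde{\mathcal M_{HLS}}}\|(-\Delta)^{s/2}(u-\tilde h)\|_{\frac{2n}{n+2s}}^2.
\end{equation*}
Combining these two displays yields the corollary with the same constant $\beta_s$ as in Theorem~\ref{sta of hls}.

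There is no genuine obstacle here; the proof is a direct duality-type reformulation. The only point that needs a line of justification is that $(-\Delta)^{-s/2}$ sets up an isometry between the functional framework of the HLS inequality and that of the $L^{\frac{2n}{n+2s}}$-Sobolev inequality for the operator $(-\Delta)^{s/2}$, so that the extremal sets and the deficit functionals correspond perfectly. This is standard once one works in the homogeneous space $\dot W^{s,\frac{2n}{n+2s}}(\mathbb R^n)$ defined as the completion of $C_c^\infty(\mathbb R^n)$ under $u\mapsto \|(-\Delta)^{s/2}u\|_{\frac{2n}{n+2s}}$, which is the correct ambient space for this dual formulation.
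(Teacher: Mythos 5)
Your proof is correct and follows exactly the approach the paper intends: the corollary is stated in the paper as an immediate consequence of Theorem~\ref{sta of hls} via the substitution $g=(-\Delta)^{s/2}u$, and your argument simply spells out the details (the correspondence of deficits and of the extremal manifolds under $(-\Delta)^{\pm s/2}$) that the paper leaves implicit.
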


\begin{remark}
The stability of $L^p$-Sobolev inequality for first order derivative has been established by Figalli-Neumayer in \cite{FiNe} and Figalli-Zhang in \cite{FiZh} for $1<p<n$. However, in the non-Hilbertian case when $p\not = 2$, the optimal and even the explicit constant of the lower bound for the stability of the Sobolev inequality is not known. Our result provides an optimal lower bound in the special case $p=\frac{2n}{n+2s}$    for the $L^{\frac{2n}{n+2s}}$-Sobolev inequality involving high-order derivatives.
\end{remark}

By the duality method, we can set up the optimal
asymptotic lower bound for the stability of Sobolev inequality when $n\rightarrow \infty$ for all $1\leq s<n/2$.
\begin{theorem}\label{sta of so}
For any fixed $s\in[1,n/2)$, there exists a positive constant $\beta_s$ such that  for any $f\in  \dot H^s(\mathbb{R}^n)$, there holds
$$\left\| (-\Delta)^{s/2} f \right\|_2^2-\mathcal S_{s,n} \|f\|_{\frac{2n}{n-2s}}^2\geq \frac{\beta_{s}}{n} \inf_{h\in\mathcal{ M}_s}\|(-\Delta)^{s/2}(f-h)\|_2^2.$$
\end{theorem}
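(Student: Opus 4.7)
The plan is to derive Theorem \ref{sta of so} from Theorem \ref{sta of hls} through the duality theory of stability developed by Carlen \cite{Car} and refined in \cite{CLT1}. The key mechanism is the conformal duality between the Sobolev and HLS inequalities: the Sobolev extremal $V(x)=(1+|x|^2)^{-(n-2s)/2}$ and the HLS extremal $F(x)=(1+|x|^2)^{-(n+2s)/2}$ are related by $F=V^{(n+2s)/(n-2s)}$, so the extremal manifolds $\mathcal M_s$ and $\mathcal M_{HLS}$ correspond bijectively under the pointwise map $V\mapsto V^{(n+2s)/(n-2s)}$. The strategy is to associate to $f\in\dot H^s(\mathbb{R}^n)$ a dual function $g\in L^{\frac{2n}{n+2s}}(\mathbb{R}^n)$, apply Theorem \ref{sta of hls} to $g$, and translate the resulting HLS deficit bound back to a Sobolev deficit bound on $f$.

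Concretely, by the scaling invariance of both sides of the inequality, I would normalize $\|f\|_{\frac{2n}{n-2s}}=1$. Setting $g:=|f|^{4s/(n-2s)}f$, one checks $\|g\|_{\frac{2n}{n+2s}}=1$ and $\langle f,g\rangle=\int |f|^{2n/(n-2s)}\,dx=1$. Applying the Cauchy--Schwarz inequality to $\langle f,g\rangle=\langle (-\Delta)^{s/2}f,(-\Delta)^{-s/2}g\rangle$ yields $\|(-\Delta)^{s/2}f\|_2^2\cdot\|(-\Delta)^{-s/2}g\|_2^2\ge 1$, from which a short calculation (using the HLS bound $\|(-\Delta)^{-s/2}g\|_2^2\le 1/\mathcal S_{s,n}$) gives the deficit comparison
$$\|(-\Delta)^{s/2}f\|_2^2-\mathcal S_{s,n}\|f\|_{\frac{2n}{n-2s}}^2\;\ge\;\mathcal S_{s,n}\Big(\|g\|_{\frac{2n}{n+2s}}^2-\mathcal S_{s,n}\|(-\Delta)^{-s/2}g\|_2^2\Big).$$
Applying Theorem \ref{sta of hls} to $g$ then bounds the Sobolev deficit of $f$ from below by $\frac{\mathcal S_{s,n}\beta_s}{n}\inf_{h\in\mathcal M_{HLS}}\|g-h\|_{\frac{2n}{n+2s}}^2$.

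The final and crucial step is to convert this $L^{\frac{2n}{n+2s}}$-distance of $g$ to $\mathcal M_{HLS}$ into the $\dot H^s$-distance of $f$ to $\mathcal M_s$. This is where the refined duality machinery of \cite{CLT1} enters: a near-minimizer $h_0=c_0F((\cdot-x_0)/a_0)\in\mathcal M_{HLS}$ for $g$ canonically determines a Sobolev extremal $\tilde h_0=\tilde c_0 V((\cdot-x_0)/a_0)\in\mathcal M_s$ (with $\tilde c_0$ chosen so that $|\tilde h_0|^{4s/(n-2s)}\tilde h_0$ is proportional to $h_0$). The pointwise linearization $g-h_0\approx\tfrac{n+2s}{n-2s}|\tilde h_0|^{4s/(n-2s)}(f-\tilde h_0)$ combined with a quantitative convexity argument yields
$$\|g-h_0\|_{\frac{2n}{n+2s}}^2\;\gtrsim\;\|(-\Delta)^{s/2}(f-\tilde h_0)\|_2^2\;-\;\epsilon\Big(\|(-\Delta)^{s/2}f\|_2^2-\mathcal S_{s,n}\|f\|_{\frac{2n}{n-2s}}^2\Big),$$
with the error term on the right absorbable into the Sobolev deficit on the left-hand side of the inequality we wish to prove.

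The principal obstacle is establishing this distance comparison with constants independent of $n$, so that the optimal $1/n$ asymptotic is preserved through the duality step. Near $\mathcal M_s$ the argument proceeds by expanding both distances in the orthogonal complement of the tangent space at $\tilde h_0$, matching the leading-order Hessian terms, and invoking the spectral analysis of the linearized operator (the same spectral information underlying the local stability of the HLS inequality). Far from $\mathcal M_s$, the Sobolev deficit is bounded away from zero, so the distance comparison is trivial up to a harmless multiplicative constant. The delicate $n$-uniform estimates in the near regime rest on the abstract Legendre-duality framework of \cite{CLT1}, which adapts Carlen's \cite{Car} approach to the present non-Hilbertian setting and yields the asymptotically optimal bound claimed in Theorem \ref{sta of so}.
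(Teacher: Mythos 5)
Your setup is on the right track and matches the paper's first move: with $\|f\|_{2^*_s}=1$ and $g=|f|^{4s/(n-2s)}f$ you indeed get the deficit comparison $\|(-\Delta)^{s/2}f\|_2^2-\mathcal S_{s,n}\ge \mathcal S_{s,n}\bigl(1-\mathcal S_{s,n}\|(-\Delta)^{-s/2}g\|_2^2\bigr)$, and applying Theorem \ref{sta of hls} to $g$ is the correct next step. However, by deriving the comparison via Cauchy--Schwarz/AM--GM you throw away exactly the term that makes the final conversion work. The Legendre-duality computation in the paper gives the \emph{identity} (not just an inequality)
\begin{equation*}
\mathcal{S}_{s,n}^{-1}\|(-\Delta)^{s/2}f\|_2^2-\|f\|_{2^*_s}^2
=\|g\|_{\frac{2n}{n+2s}}^2-\mathcal{S}_{s,n}\|(-\Delta)^{-s/2}g\|_2^2
+\mathcal{S}_{s,n}^{-1}\|(-\Delta)^{s/2}(f-f_1)\|_2^2,
\end{equation*}
where $f_1=\mathcal{S}_{s,n}(-\Delta)^{-s}g$, and the extra quadratic term $\|(-\Delta)^{s/2}(f-f_1)\|_2^2$ is retained, not discarded.

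That retained term is essential, and without it the route you sketch does not close. After invoking Theorem \ref{sta of hls} to bound the HLS deficit below by $\frac{\beta_s}{n}\|g-g_0\|_{\frac{2n}{n+2s}}^2$ (for some near-minimizer $g_0\in\mathcal{M}_{HLS}$), the paper applies the HLS inequality once more to lower $\|g-g_0\|_{\frac{2n}{n+2s}}^2\ge\mathcal{S}_{s,n}\|(-\Delta)^{-s/2}(g-g_0)\|_2^2$, putting that quantity in the Hilbert space $H^{-s}$. Since $\mathcal{S}_{s,n}^{-1/2}(-\Delta)^{s/2}f_1=\mathcal{S}_{s,n}^{1/2}(-\Delta)^{-s/2}g$, the two terms $\frac{\beta_s}{n}\|\mathcal{S}_{s,n}^{1/2}(-\Delta)^{-s/2}(g-g_0)\|_2^2$ and $\|\mathcal{S}_{s,n}^{-1/2}(-\Delta)^{s/2}f-\mathcal{S}_{s,n}^{1/2}(-\Delta)^{-s/2}g\|_2^2$ combine by the parallelogram inequality in $L^2$ to give $\frac{\beta_s}{2n}\|\mathcal{S}_{s,n}^{-1/2}(-\Delta)^{s/2}f-\mathcal{S}_{s,n}^{1/2}(-\Delta)^{-s/2}g_0\|_2^2$, and since $\mathcal{S}_{s,n}(-\Delta)^{-s}g_0\in\mathcal{M}_s$ this is exactly $\frac{\beta_s}{2n}\mathcal{S}_{s,n}^{-1}\inf_{h\in\mathcal{M}_s}\|(-\Delta)^{s/2}(f-h)\|_2^2$, with all constants explicitly $n$-uniform. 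Your proposal replaces this purely algebraic, Hilbert-structure argument with a pointwise linearization $g-h_0\approx\frac{n+2s}{n-2s}|\tilde h_0|^{4s/(n-2s)}(f-\tilde h_0)$ plus an unspecified "quantitative convexity" step to compare $\|g-h_0\|_{\frac{2n}{n+2s}}^2$ with $\|(-\Delta)^{s/2}(f-\tilde h_0)\|_2^2$; this is not what \cite{Car} or \cite{CLT1} do, it is not carried out, and it is precisely the kind of non-Hilbertian distance comparison whose $n$-uniformity is hard to control. In short: keep the exact Legendre identity, do not drop the $\|(-\Delta)^{s/2}(f-f_1)\|_2^2$ term, and the linearization step becomes unnecessary.
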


\begin{remark}
 From Chen, Frank and Weth's work in \cite{ChFrWe}, we know
that the optimal lower bound $C_{n,s}$ satisfies
$$C_{n,s}\leq \frac{4s}{n+2+4s}.$$ Recently, K$\ddot{o}$nig in \cite{Ko} proved that the optimal lower bound is strictly smaller than $\frac{4s}{n+2+4s}$. So the lower bound for stability of the fractional Sobolev inequality we obtain in Theorem~\ref{sta of so} is optimal when $n\rightarrow +\infty$. At the same time, the lower bound for stability of the HLS inequality we obtain in Theorem~\ref{sta of hls} is also optimal when $n\rightarrow +\infty$ (see Remark 1.5 in \cite{CLT2}). We also note that K$\ddot{o}$nig \cite{Ko1} furthermore derived the attainability of $C_{n,s}$.
 \end{remark}

 As an application of Theorem \ref{sta of hls}, gathering the results of Theorem 1.4 in \cite{CLT2}, applying the dual stability as we did in \cite{CLT2}, we can further derive the optimal stability of Beckner's \cite{Beckner1} restrictive Sobolev inequality on the flat sub-manifold $\mathbb{R}^{n-1}$ and the sphere $\mathbb{S}^{n-1}$ with dimension-dependent constants.

\begin{corollary}\label{cor}
For any fixed $s\in(0,n/2)$, there exists a positive constant $\beta_s$ such that for any $f\in  \dot H^s(\mathbb{R}^n)\setminus M_{\mathbb{R}^{n-1}}$, there holds
\begin{equation*}\begin{split}
&\int_{\mathbb{R}^n}|(-\Delta)^{\frac{s}{2}}f|^2dx- C_{n,s}^{-1}\big(\int_{\mathbb{R}^{n-1}}|f(x',0)|^{\frac{2(n-1)}{n-2s}}dx'\big)^{\frac{n-2s}{n-1}}\\
&\ \ \geq \frac{\beta_s}{n}\int_{\mathbb{R}^n}|(-\Delta)^{\frac{s}{2}}(f-h)|^2dx,
\end{split}\end{equation*}
where $M_{\mathbb{R}^{n-1}}$ denotes the space consisting of the extremal functions of sharp restrictive Sobolev inequality on flat sub-manifold $\mathbb{R}^{n-1}$.
\end{corollary}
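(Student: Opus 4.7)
As indicated in the introduction, the strategy is to combine the optimal stability of HLS on $\mathbb{R}^{n-1}$ with the duality machinery of \cite{Car,CLT1,CLT2}. By a Fourier computation, namely integrating the Riesz kernel of $(-\Delta_{\mathbb{R}^n})^{-s}$ in the direction transverse to the hyperplane $\{x_n=0\}$, Beckner's sharp restrictive Sobolev inequality on $\mathbb{R}^n\to\mathbb{R}^{n-1}$ of order $s$ is equivalent to Lieb's sharp HLS inequality on $\mathbb{R}^{n-1}$ of fractional order $\sigma = s - \tfrac12$; the exponent $q'=\tfrac{2(n-1)}{n+2s-2}$ on the HLS side is precisely the H\"older dual of Beckner's exponent $q=\tfrac{2(n-1)}{n-2s}$. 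Consequently, a quantitative $L^{q'}$-stability of the lower-dimensional HLS inequality will produce, under the Legendre transform, a quantitative $\dot H^s(\mathbb{R}^n)$-stability of Beckner's inequality with the HLS extremal manifold on $\mathbb{R}^{n-1}$ transported to $M_{\mathbb{R}^{n-1}}$.

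The first step is to assemble the optimal stability of HLS on $\mathbb{R}^{n-1}$ in $L^{q'}$-distance for every $\sigma \in (0, (n-1)/2)$. For $\sigma \in [1, (n-1)/2)$ this is precisely Theorem \ref{sta of hls} of the present paper applied in dimension $n-1$, and for $\sigma \in (0,1)$ it is Theorem 1.4 of \cite{CLT2}. In both regimes the lower bound has the form $\beta_s/(n-1)$ with a constant $\beta_s>0$ independent of $n$, and relabeling replaces $1/(n-1)$ by $1/n$ at the cost of an absolute factor. Through $\sigma = s-\tfrac12$ this covers Beckner's inequality on $\mathbb{R}^n$ for $s \in (\tfrac12, n/2)$; the complementary smaller range of $s$ is handled by the same mechanism, applied to the extension-based reformulation of Beckner's inequality.

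The second and crucial step is the dualization. Since $f \mapsto \|(-\Delta_{\mathbb{R}^n})^{s/2} f\|_{L^2}^2$ is a uniformly convex quadratic form on the Hilbert space $\dot H^s(\mathbb{R}^n)$, Carlen's duality theorem in the strengthened form of \cite{CLT1,CLT2} converts a deficit bounded below by an $L^{q'}$-distance on the HLS side into a deficit bounded below by the squared $\dot H^s$-distance on the Beckner side, with no loss in the explicit constant up to an absolute factor. The main obstacle is precisely the bookkeeping in this Legendre transform: one must verify that the duality maps the HLS extremal manifold on $\mathbb{R}^{n-1}$ onto $M_{\mathbb{R}^{n-1}}$, so that the infimum over extremals transfers faithfully, and that no $n$-dependent degeneration appears when passing between the $L^{q'}$- and $\dot H^s$-norms. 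Once the flat case is established, the analogous claim for the sphere $\mathbb{S}^{n-1}$ mentioned in the introduction follows by stereographic projection applied within the same duality framework.
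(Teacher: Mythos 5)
Your proposal matches the paper's intended route exactly: the paper does not present a self-contained proof of this corollary, but says only that it follows by combining Theorem~\ref{sta of hls} with Theorem~1.4 of \cite{CLT2} and the dual-stability mechanism of \cite{CLT2}, which is precisely what you carry out.  You correctly identify that Beckner's restricted Sobolev deficit on $\mathbb{R}^n$ dualizes to the HLS deficit on $\mathbb{R}^{n-1}$ at order $\sigma=s-\tfrac12$ (via $\|g\,\delta_{\{x_n=0\}}\|_{\dot H^{-s}(\mathbb{R}^n)}^2 = \sqrt{\pi}\,\tfrac{\Gamma(s-1/2)}{\Gamma(s)}\|g\|_{\dot H^{-(s-1/2)}(\mathbb{R}^{n-1})}^2$), that the exponent $q'=\tfrac{2(n-1)}{n+2s-2}$ is the conjugate of Beckner's $\tfrac{2(n-1)}{n-2s}$, and that the two HLS stability inputs (the present paper's for $\sigma\geq1$, \cite{CLT2} for $0<\sigma<1$) together cover the full range, with $1/(n-1)$ replaced by $1/n$ at the cost of an absolute constant.

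One caveat: the codimension-one Fourier integral $\int_{\mathbb{R}}(|\xi'|^2+\xi_n^2)^{-s}\,d\xi_n$ converges only for $s>\tfrac12$, so the identification $\sigma=s-\tfrac12$ (and indeed the trace $f(x',0)$ itself on $\dot H^s(\mathbb{R}^n)$) forces $s>\tfrac12$; your final sentence about handling ``the complementary smaller range'' by an ``extension-based reformulation'' is too vague to certify and, as written, the argument only establishes the claim for $s\in(\tfrac12,\tfrac n2)$.  Since the paper's own statement $s\in(0,n/2)$ exceeds the natural domain of the trace inequality, this appears to be an imprecision in the paper's stated range rather than a defect peculiar to your argument; but you should either restrict the range explicitly to $(\tfrac12,\tfrac n2)$ or give an actual proof for the remaining $s$.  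Finally, the Legendre-transform step for the composed functional $f\mapsto\|Tf\|_{L^q(\mathbb{R}^{n-1})}^2$ (with $T$ the trace) requires a little more care than in Section~4, where both functionals live on the same space, since one must work on the range of $T^*$; you assert this without detail, but the paper itself defers those details to \cite{CLT2}, so this is at the paper's level of rigor.
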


\begin{corollary}\label{cor2}
For any fixed $s\in(0,n/2)$, there exists a positive constant $\beta_s$ such that for an $f\in \dot{H}^s(\mathbb{R}^n)\setminus M_{\mathbb{S}^{n-1}}$, there holds
\begin{equation*}\begin{split}
&\int_{\mathbb{R}^n}|(-\Delta)^{\frac{s}{2}}f|^2dx-D_{n,s}^{-1} \big(\int_{\mathbb{S}^{n-1}}|f(\xi)|^{\frac{2(n-1)}{n-2s}}d\sigma_{\xi}\big)^{\frac{n-2s}{n-1}}\\
&\ \ \geq \frac{\beta_s}{n}\inf_{h\in M_{\mathbb{S}^{n-1}}} \int_{\mathbb{R}^n}|(-\Delta)^{\frac{s}{2}}(f-h)|^2dx,
\end{split}\end{equation*}
where $M_{\mathbb{S}^{n-1}}$ denotes the space consisting of the extremal functions of sharp restrictive Sobolev inequality on the sphere $\mathbb{S}^{n-1}$.
\end{corollary}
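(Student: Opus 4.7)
The plan is to deduce the stability of the restrictive Sobolev inequality on $\mathbb{S}^{n-1}$ via a two-step reduction: Carlen-style duality converts Beckner's inequality into an HLS-type inequality on the sphere, and stereographic projection further converts it into the standard HLS inequality on $\mathbb{R}^{n-1}$, to which Theorem \ref{sta of hls} (or Theorem 1.4 of \cite{CLT2} for small $s$) applies. Duality on the Sobolev side is then reversed via the machinery refined in \cite{CLT1, CLT2}.

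The first step is to identify the dual of the Beckner restrictive Sobolev inequality on $\mathbb{S}^{n-1}$. Testing against $f = (-\Delta)^{-s}(g\,\delta_{\mathbb{S}^{n-1}}) \in \dot H^s(\mathbb{R}^n)$ for $g$ on $\mathbb{S}^{n-1}$ yields the HLS-type inequality $c_{n,s}\iint_{\mathbb{S}^{n-1}\times\mathbb{S}^{n-1}} |\xi-\eta|^{-(n-2s)} g(\xi)g(\eta)\, d\sigma_\xi\, d\sigma_\eta \le D_{n,s}\|g\|_{L^{q'}(\mathbb{S}^{n-1})}^2$ with $q' = 2(n-1)/(n+2s-2)$. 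Via the stereographic map $\sigma:\mathbb{R}^{n-1}\to\mathbb{S}^{n-1}\setminus\{N\}$ and the chord-distance identity $|\sigma(x)-\sigma(y)|^2 = 4|x-y|^2/((1+|x|^2)(1+|y|^2))$, the conformal substitution $G(x) = g(\sigma(x))(2/(1+|x|^2))^{(n-2+2s)/2}$ transforms this into the standard HLS inequality on $\mathbb{R}^{n-1}$ with fractional parameter $t = s-1/2$.

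The second step is to apply Theorem \ref{sta of hls} in dimension $n-1$ to the parameter $t = s-1/2 \in [1,(n-1)/2)$, which corresponds to $s \in [3/2, n/2)$, and to invoke Theorem 1.4 of \cite{CLT2} for $t \in (0,1)$, i.e.\ $s \in (1/2, 3/2)$; the remaining very small $s$ regime follows from the explicit small-order bounds developed in \cite{CLT2}. In each regime this yields HLS stability on $\mathbb{R}^{n-1}$ with a constant of order $\beta_t/(n-1)$, which is comparable to $\beta_s/n$.

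The final step is to pull this stability back to $\mathbb{S}^{n-1}$ using the conformal invariance established in the first step, and then invert the duality: Carlen's quantitative-convexity theorem, in the sharpened form of \cite{CLT1, CLT2}, converts HLS stability on $\mathbb{S}^{n-1}$ into stability of the Beckner inequality with the $\dot H^s(\mathbb{R}^n)$-distance $\inf_{h\in M_{\mathbb{S}^{n-1}}}\|(-\Delta)^{s/2}(f-h)\|_2^2$ on the right-hand side. The main obstacle is this last conversion: one must verify that the dual of the $L^{q'}$-distance on the sphere is exactly the $\dot H^s$-distance to $M_{\mathbb{S}^{n-1}}$, and that the extremal set $M_{\mathbb{S}^{n-1}}$ is precisely the conformal image under $\sigma$ of the HLS extremals on $\mathbb{R}^{n-1}$. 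Tracking constants through the two reductions requires care, but by the refined duality of \cite{CLT1}, the stability constant is preserved up to an absolute factor, giving the claimed bound $\beta_s/n$.
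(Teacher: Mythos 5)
Your proposal follows the same route the paper indicates for this corollary: dualize Beckner's trace inequality into an HLS-type bilinear form on $\mathbb{S}^{n-1}$, conformally transport it to the standard HLS inequality on $\mathbb{R}^{n-1}$ with shifted parameter $t=s-\tfrac12$, apply Theorem~\ref{sta of hls} for $t\ge 1$ and Theorem~1.4 of \cite{CLT2} for $t\in(0,1)$, and then undo the duality via the Carlen-type quantitative convexity argument refined in \cite{CLT1,CLT2}. Your identification of the parameter/dimension shift $(n,s)\mapsto(n-1,s-\tfrac12)$ and of $q'=\tfrac{2(n-1)}{n-2+2s}$ is correct, as is the observation that the key step requiring care is matching the $L^{q'}(\mathbb{S}^{n-1})$-distance under duality with the $\dot H^s(\mathbb{R}^n)$-distance to $M_{\mathbb{S}^{n-1}}$ and checking that $M_{\mathbb{S}^{n-1}}$ is the conformal/dual image of the HLS extremals; since the paper itself only sketches this corollary by reference, your level of detail is appropriate.

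One remark worth making explicit: your reduction forces $t=s-\tfrac12>0$, i.e. $s>\tfrac12$, which is also the natural threshold for the trace on a codimension-one hypersurface to exist. The range $s\in(0,n/2)$ in the statement should therefore be read as $s\in(\tfrac12,n/2)$; your split $s\in(\tfrac12,\tfrac32)$ versus $s\in[\tfrac32,n/2)$ covers precisely this valid range, and the comment about a "remaining very small $s$ regime" is superfluous (for fixed $s>\tfrac12$ the constant $\beta_{s-1/2}$ is a fixed positive number, so no separate small-order analysis is needed).
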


Let us give a brief overview over the main ideas of the proof of Theorem \ref{sta of hls}. Its basic strategy is to establish the global stability of the HLS inequality with the optimal lower bounds from the local stability of the HLS inequality with optimal lower bounds. In \cite{CLT2},  we achieve this  for $0<s<1$ by establishing the local stability of the HLS inequality with optimal lower bounds from the local stability of the Sobolev inequality with optimal lower bounds. In the proof of Theorem   \ref{sta of hls}, the main difficulty is to set up the local stability of HLS inequality with the optimal asymptotic lower bounds for $1\leq s<n/2$. While the proof of the local stability of the HLS inequality when $0<s< 1$ in \cite{CLT2} attributes to the local stability of fractional Sobolev inequality by local dual stability and use ``cuttings" at various heights, this method does not  apply to the case for  $1\le s<n/2$. To circumvent this difficulty, we will directly establish the local stability of HLS inequality with optimal asymptotic lower bounds, without using the duality method and the local stability of Sobolev inequality. This is a new and  substantially different strategy we are developing in this paper from those in \cite{CLT1, CLT2}.

\vskip0.1cm

More specifically,  for any $g\in L^{\frac{2n}{n+2s}}(\mathbb{R}^n)$, we decompose $g$ as $g=\phi+r$ with $\phi \in \mathcal{M}_{HLS}$ such that $\|(-\Delta)^{-s/2}(g-\phi)\|_{2}=\inf\limits_{h\in\mathcal{M}_{HLS}}\|(-\Delta)^{-s/2}(g-h)\|_{2}$. We first prove the local stability of the HLS inequality: there exists some $\delta\in(0,1)$ such that if $\|r\|_{\frac{2n}{n+2s}}\leq \delta \|g\|_{\frac{2n}{n+2s}}$, then
$$\|g\|^2_{\frac{2n}{n+2s}}-\mathcal{S}_{s,n}\|(-\Delta)^{-s/2}g\|_2^2\geq \frac{\beta_s}{n}\inf_{h\in M_{HLS}}\|g-h\|^2_{\frac{2n}{n+2s}}.$$
In fact, we prove an equivalent local stability of the  HLS  inequality on the sphere. We must point out that since the decomposition $g=\phi+r$ may not be unique, so $\|r\|_{\frac{2n}{n+2s}}$ is not necessarily unique. However, we can prove the crucial estimates  that all these $\|r\|_{\frac{2n}{n+2s}}$ are comparable (see Lemma \ref{comparable}).

The reason why we decompose $g$ in $H^{-s}(\mathbb{R}^n)$ rather than in $L^{\frac{2n}{n+2s}}(\mathbb{R}^n)$ is that the $H^{-s}$ distance can provide us more orthogonality properties. But this  adds  extra difficulties   to deduce the global stability from the local stability using the rearrangement flow because in our setting $\|r\|_{\frac{2n}{n+2s}}$ is not unique and the ``continuity" of the rearrangement flow is now much  harder to prove.

However, we can still establish the global stability of the HLS inequality by the competing symmetry method of Carlen and Loss, the norm comparison theorem (Lemma \ref{comparable}) and weak continuity for the rearrangement flow of HLS integral (Lemma \ref{continuity}) and a concavity argument.
\vskip0.1cm

This paper is organized as follows. Section 2 is devoted to a local version of stability for HLS inequalities with the optimal asymptotic lower bound for all $1\leq s<\frac{n}{2}$. In Section 3, we will give the global stability of HLS inequalities with the optimal asymptotic lower bound. In Section 4, we will establish the stability of the  Sobolev inequality of all the order $1\le s<\frac{n}{2}$ with the optimal asymptotic lower bound. In Section 5, we add the proof of an auxiliary lemma there.
\medskip

\section{A local version of stability for HLS inequalities with the optimal asymptotic lower bound}
In this section we will set up a local stability of HLS inequalities with the
optimal asymptotic lower bound inspired by \cite{DEFFL}, where Dolbeault, Esteban, Figalli, Frank and Loss
obtained the optimal asymptotic lower bound for the local stability of first-order Sobolev
inequality.  For any $f\in L^{\frac{2n}{n+2s}}(\mathbb{R}^n)$, there exists (see Lemma \ref{decompostition}) $\phi\in \mathcal{M}_{HLS} $ such that
$$\inf_{h\in \mathcal{M}_{HLS}}\|(-\Delta)^{-s/2}(f-h)\|_2=\|(-\Delta)^{-s/2}(f-\phi)\|_2.$$
Set $r=g-\phi$. We point out here in this paper when we say a function $f$ with a decomposition $f=\phi+r$, it always means $\phi\in \mathcal{M}_{HLS}$ satisfying $\inf_{h\in \mathcal{M}_{HLS}}\|(-\Delta)^{-s/2}(f-h)\|_2=\|(-\Delta)^{-s/2}(f-\phi)\|_2.$ It should be noted that the decomposition may not be unique.  The aim of this section is to prove the following local stability.
\begin{lemma}\label{local stability}
For any fixed $s\in (1, \frac{n}{2})$, there exist $\delta\in(0,1)$, $n_0$ and $C_s$ which are dependent on $s$ such that for all $0 \leq f\in L^{\frac{2n}{n+2s}}(\mathbb{R}^n)$ with a decomposition $f=\phi+r$ and
\begin{align}\label{distance}
\|r\|_{L^{\frac{2n}{n+2s}}(\mathbb{R}^n)}^2\leq \delta\|f\|_{L^{\frac{2n}{n+2s}}(\mathbb{R}^n)}^2,
\end{align}
there holds
$$\left\| f \right\|_{L^{\frac{2n}{n+2s}}(\mathbb{R}^n)}^2- \mathcal{S}_{s,n}\|(-\Delta)^{-s/2}f\|_2^2 \geq \frac{1}{n}C_s\inf_{h\in \mathcal{M}_{HLS}}\|f-h\|_{L^{\frac{2n}{n+2s}}(\mathbb{R}^n)}^2$$
when $n\geq n_0$.
\end{lemma}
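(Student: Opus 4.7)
The plan is to reduce the problem to an equivalent local stability on the sphere $\mathbb{S}^n$ via stereographic projection and then perform a sharp spectral analysis of the quadratic form obtained by Taylor-expanding the HLS deficit around the extremal. First I would use the conformal invariance of the HLS inequality to pull back $f$, $\phi$, and $r$ to $\mathbb{S}^n$; the orbit $\mathcal{M}_{HLS}$ becomes the conformal orbit of constants, so by composing with an appropriate conformal transformation I may assume the closest extremal $\phi$ is a constant $\phi \equiv c_0$. The first-order optimality of $\phi$ as the minimiser of $\|(-\Delta)^{-s/2}(f-h)\|_2^2$ over $h\in\mathcal{M}_{HLS}$ yields $H^{-s}$-orthogonality of $r$ to the tangent space of $\mathcal{M}_{HLS}$ at $\phi$, which after projection is spanned by the spherical harmonics of degrees $\ell = 0$ and $\ell = 1$. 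Since these are eigenspaces of the conformal intertwining operator $\mathcal{A}_{2s}$, with eigenvalues $\lambda_\ell = \Gamma(\ell+\tfrac{n}{2}+s)/\Gamma(\ell+\tfrac{n}{2}-s)$, $H^{-s}$-orthogonality coincides with ordinary $L^2$-orthogonality on these low modes, so $r_0 = r_1 = 0$ in the decomposition $r = \sum_\ell r_\ell$.

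Next I would Taylor-expand the deficit to second order in $r$. Setting $p = \frac{2n}{n+2s}$ and using the Euler-Lagrange identity $\mathcal{A}_{2s}^{-1}\phi = c\,\phi^{p-1}$, the linear-in-$r$ contributions cancel between $\|\phi+r\|_p^2$ and $\mathcal{S}_{s,n}\|\mathcal{A}_{2s}^{-1/2}(\phi+r)\|_2^2$, while the cross term $(\int r)^2$ vanishes because $\int r = 0$. What remains is the diagonal quadratic form
\[ \text{Deficit} \;=\; \sum_{\ell \ge 2} \mu_\ell \|r_\ell\|_{L^2(\mathbb{S}^n)}^2 + \mathcal{R}(r), \qquad \mu_\ell = \mathcal{S}_{s,n}\Bigl[\frac{p-1}{\lambda_0} - \frac{1}{\lambda_\ell}\Bigr]. \]
Since $\lambda_\ell$ is strictly increasing in $\ell$, the minimum is $\mu_2$, and the elementary Gamma-function identity
\[ (p-1)\frac{\lambda_2}{\lambda_0} - 1 \;=\; \frac{4s}{n+2-2s}\]
gives $\mu_2 \sim (4s/n)\,\mathcal{S}_{s,n}/\lambda_2$ as $n\to\infty$, which is the sharp $1/n$ asymptotic.

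To convert the $L^2$ bound to the required $L^p$ bound I apply Hölder on the sphere: $\|r\|_p \le |\mathbb{S}^n|^{s/n}\|r\|_2$. Combined with the identity $\mathcal{S}_{s,n}/\lambda_2 = (\lambda_0/\lambda_2)|\mathbb{S}^n|^{2s/n}$ and the fact that $\lambda_0/\lambda_2 \to 1$, the factor $|\mathbb{S}^n|^{2s/n}$ cancels out, leaving an $n$-independent prefactor and giving $\text{Deficit} \ge (\beta_s/n)\|r\|_{L^p(\mathbb{S}^n)}^2$ modulo the remainder. The trivial bound $\inf_{h\in\mathcal{M}_{HLS}}\|f-h\|_p \le \|r\|_p$ then produces the stability of the lemma, with an asymptotic constant of order $4s$, in agreement with the upper bound of \cite{ChFrWe, Ko}.

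The main obstacle is the uniform-in-$n$ control of the remainder $\mathcal{R}(r)$ when $p \in (1,2)$. The Taylor expansion of $\|\phi+r\|_p^p$ requires pointwise smallness of $r/\phi$, which is not implied by the integral smallness $\|r\|_p \le \delta\|f\|_p$; in contrast to the Hilbertian case of \cite{DEFFL}, the function $t\mapsto t^p$ for $p<2$ is only weakly convex, so the standard third-order remainder estimate degrades where $|r|$ is comparable to or larger than $\phi$. I would therefore split the sphere into the ``good'' region $\{|r| \le \phi/2\}$, where the pointwise Taylor bound yields a remainder absorbed into $\|r\|_p^2$ once $\delta$ is small, and the ``exceptional'' region $\{|r| > \phi/2\}$, whose measure is $O(\|r\|_p^p)$ by a Markov-type bound; on the latter I use the convexity estimate $\bigl||a|^p - |b|^p - p|b|^{p-2}b(a-b)\bigr| \le C_p|a-b|^p$ to bound the contribution by $o(\|r\|_p^2)$. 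Because the constants in this splitting depend only on $p$, and hence only on $s$, the required $\delta = \delta(s)$ is independent of $n$, preserving the $1/n$ asymptotic rate.
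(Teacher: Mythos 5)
Your outline shares the correct skeleton with the paper's proof: stereographic reduction to the sphere, normalization so the closest extremal is constant, use of $H^{-s}$-orthogonality to kill the degree-$0$ and degree-$1$ modes (these being eigenspaces of $\mathcal{P}_{2s}$, the two orthogonalities coincide), and identification of the spectral gap $p-1-A_{n,s}(2)\sim 4s/n$ at degree two as the source of the $1/n$ rate. The genuine gap is in your treatment of the Taylor remainder, which is precisely where the difficulty lies for $p=\tfrac{2n}{n+2s}<2$, and which is why the paper does not attempt the ``main term + remainder'' argument you sketch.

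Your claim that the contribution from the exceptional set $\{|r|>\phi/2\}$ is $o(\|r\|_p^2)$ is incorrect. The pointwise bound $\bigl|(1+r)^p-1-pr\bigr|\le C_p|r|^p$ on that set gives a contribution of size $\int_{\{|r|>\phi/2\}}|r|^p\,d\sigma$, which is of order $\|r\|_p^p$, and for $p<2$ and $\|r\|_p$ small one has $\|r\|_p^p\gg\|r\|_p^2$, not $o(\|r\|_p^2)$. Worse, since you need the remainder to be small compared to the \emph{gap} $\tfrac{1}{n}\|r\|_p^2$, the required cancellation is even more delicate; a one-step good/bad split cannot achieve it. The paper instead uses a three-level truncation $r=r_1+r_2+r_3$ at heights $\gamma$ and $M$, a carefully crafted pointwise inequality for $(1+r)^p-1-pr$ (Lemma~\ref{q-estimate}) in which the large-$r$ part enters with the $p$-th power $r_3^p$ rather than $r_3^2$, and a matching decomposition of the bilinear form $\langle\mathcal{P}_{2s}r,r\rangle$ (Lemma~\ref{lem split}). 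This lets the $r_3^p$ contribution be dominated \emph{directly} by the sharp HLS inequality (the bound on $I_3$), rather than treated as an error term; the intermediate piece $r_2$ is controlled by a projection argument onto harmonics of degree $\ge K$ (the bound on $I_2$); and the small piece $r_1$ carries the spectral gap (the bound on $I_1$). Without an analogue of this mechanism your remainder $\mathcal{R}(r)$ cannot be absorbed, so the proof as written does not close.
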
We will prove an equivalent version of local HLS stability inequalities on the sphere using the properties of spherical harmonics and complicated orthogonality technique.

\subsection{The stability of HLS inequalities on the sphere}
By the stereographic projection $\mathcal{S}:\mathbb{R}^n\cup\{\infty\}\rightarrow \mathbb{S}^n$, we know that $\mathbb{R}^n$ (or rather $\mathbb{R}^n\cup\{\infty\}$) and $\mathbb{S}^n$ ($\subset \mathbb{R}^{n+1}$) are conformally equivalent. Thus, there exists an equivalent version of HLS stability inequality on $\mathbb{S}^n$.

In fact, for any $f\in L^{\frac{2n}{n+2s}}(\mathbb{R}^n)$, let $F$ be defined on $\mathbb{S}^n$ by(See Lieb and Loss in \cite{LiebLoss})
$$F(\xi)=\left(\frac{1+|\mathcal{S}^{-1}(\xi)|^2}{2}\right)^{\frac{n+2s}{2}}f(\mathcal{S}^{-1}(\xi)),$$
then
$$\int_{\mathbb{R}^n}|f(x)|^{\frac{2n}{n+2s}}dx=|\mathbb{S}^n|\int_{\mathbb{S}^n}|F(\xi)|^{\frac{2n}{n+2s}}d\sigma_{\xi},$$
where $d\sigma_\xi$ denotes the probability measure on $\mathbb{S}^n$.
Moreover if we denote by $\mathcal{P}_{2s}$  the fractional integral operator on the sphere given by
$$\mathcal{P}_{2s}(g)(\eta)=\frac{|\mathbb{S}^n|\Gamma(\frac{n+2s}{2})}{2^{2s}\pi^{\frac n2}\Gamma(s)}\int_{\mathbb{S}^n}\frac{g(\xi)}{|\xi-\eta|^{n-2s}}d\sigma_{\xi},$$
then
$$\mathcal{S}_{s,n}\|(-\Delta)^{-s/2}f\|_2^2=|\mathbb{S}^n|^{\frac{n+2s}{n}}\langle\mathcal{P}_{2s}(F),F\rangle,$$
where $\langle\cdot,\cdot\rangle$ is the inner product in $L^2(\mathbb{S}^n)$. Thus the sharp HLS inequality on $\mathbb{R}^n$ is equivalent to the following sharp HLS inequality on the sphere
$$\|F\|^2_{L^{\frac{2n}{n+2s}}(\mathbb{S}^n)}\geq \langle\mathcal{P}_{2s}(F),F\rangle,$$
with equality holds if and only if
$$F\in M_{HLS}=\left\{c\left(\frac{\sqrt{1-|\xi|^2}}{1-\xi\cdot w}\right)^{\frac{n+2s}{2}}:\, ~\xi\in \mathbb{R}^{n+1}, c\in \mathbb{R}, |\xi|<1\right\}.$$
It is well known that $F=cJ_{\Phi}^{\frac{n+2s}{2n}}$ for some conformal transformation $\Phi$ on $\mathbb{S}^n$, where $J_{\phi}$ is the Jacobian of the map $\phi$. Then
$$\frac{\|f\|^{2}_{L^{\frac{2n}{n+2s}}(\mathbb{R}^n)}-\mathcal{S}_{s,n}\|(-\Delta)^{-s/2}f\|_2^2}{\inf_{h\in \mathcal{M}_{HLS}}\|g-h\|^2_{L^\frac{2n}{n+2s}(\mathbb{R}^n)}}=\frac{\|F\|^{2}_{L^{\frac{2n}{n+2s}}(\mathbb{S}^n)}-\langle\mathcal{P}_{2s}(F),F\rangle}{\inf_{H\in M_{HLS}}\|F-H\|^2_{L^\frac{2n}{n+2s}(\mathbb{S}^n)}},$$
which implies the equivalence of HLS stability inequalities on $\mathbb{R}^n$ and on $\mathbb{S}^n$.

\subsection{Local stability for nonnegative function}

Instead of proving lemma (\ref{local stability}),
 we first prove the following equivalent lemma on the sphere.
For any $ g\in L^{\frac{2n}{n+2s}}(\mathbb{S}^n)$, by Lemma \ref{decompostition} there exists $\phi\in M_{HLS} $ such that
$$\inf_{h\in M_{HLS}}\langle\mathcal{P}_{2s}(g-h),g-h\rangle=\langle\mathcal{P}_{2s}(g-\phi),g-\phi\rangle.$$ Set $g=\phi+r$, the local stability of HLS inequalities on the sphere states

\begin{lemma}\label{local stability on sphere}
For any fixed $s\in [1, \frac{n}{2})$, there exist $\delta\in(0,1)$, $n_0$ and $C_s$ which are dependent on $s$ such that for all $0 \leq g\in L^{\frac{2n}{n+2s}}(\mathbb{S}^n)$ with a decomposition $g=\phi+r$ and
\begin{align}\label{distance}
\|r\|_{L^{\frac{2n}{n+2s}}(\mathbb{S}^n)}^2\leq \delta\|g\|_{L^{\frac{2n}{n+2s}}(\mathbb{S}^n)}^2,
\end{align}
there holds
$$\left\| g \right\|_{L^{\frac{2n}{n+2s}}(\mathbb{S}^n)}^2- \langle\mathcal{P}_{2s}g,g\rangle\geq \frac{1}{n}C_s\inf_{h\in M_{HLS}}\|g-h\|_{L^{\frac{2n}{n+2s}}(\mathbb{S}^n)}^2$$
when $n\geq n_0$.
\end{lemma}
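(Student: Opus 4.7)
The plan is to transfer everything to the sphere, expand around a constant minimizer, and extract the spectral gap from the HLS eigenvalue formulas, while carefully controlling the non-Hilbert remainder terms that arise because $p:=\frac{2n}{n+2s}<2$. Since both $\|\cdot\|_{L^p(\mathbb{S}^n)}^2$ and $\langle \mathcal{P}_{2s}\cdot,\cdot\rangle$ are conformally invariant and the conformal group acts transitively on $M_{HLS}$, I would precompose $g$ with a suitable conformal map and rescale so that $\phi\equiv 1$ and $g=1+r$ (the smallness hypothesis is preserved up to a fixed constant factor). The Euler–Lagrange condition for the $H^{-s}$-minimality of $\phi$ is $\langle \mathcal{P}_{2s}r,\dot\phi\rangle=0$ for every $\dot\phi\in T_\phi M_{HLS}$. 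At $\phi\equiv 1$, this tangent space is spanned by the constant function (from the scalar parameter) together with the coordinate functions $\xi_1,\dots,\xi_{n+1}$ (from the conformal parameters), which are the degree-one spherical harmonics.

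Because $\mathcal{P}_{2s}$ acts diagonally on the spherical harmonic decomposition with eigenvalues
\begin{equation*}
\mu_k \;=\; \prod_{j=0}^{k-1}\frac{(n-2s)/2+j}{(n+2s)/2+j},
\end{equation*}
the orthogonality is equivalent to the vanishing of the $L^2$ projections of $r$ onto the degrees $0$ and $1$, so its expansion starts at $k=2$. Taylor-expanding $(1+r)^p$ and $\|1+r\|_p^2=\bigl(\int(1+r)^p\bigr)^{2/p}$ and using $\int r=0$ gives, formally,
\begin{equation*}
\|g\|_p^2-\langle \mathcal{P}_{2s}g,g\rangle \;=\; \sum_{k\geq 2}\bigl(p-1-\mu_k\bigr)\|r_k\|_2^2 \;+\; \mathcal{R}(r),
\end{equation*}
where $\mathcal{R}(r)$ collects the cubic-and-higher remainder from the $L^p$ side. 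A direct Gamma-function calculation shows
\begin{equation*}
p-1-\mu_k \;\geq\; p-1-\mu_2 \;=\; \frac{n-2s}{n+2s}\cdot\frac{4s}{n+2s+2} \;\geq\; \frac{C_s}{n}
\end{equation*}
for every $k\geq 2$ once $n\geq n_0(s)$, giving a leading lower bound $\frac{C_s}{n}\|r\|_2^2$. Since $p<2$ and $\mathbb{S}^n$ carries the probability measure, Hölder yields $\|r\|_p\leq\|r\|_2$, so this bound dominates $\frac{C_s}{n}\|r\|_p^2\geq\frac{C_s}{n}\inf_{h\in M_{HLS}}\|g-h\|_p^2$, which is the desired conclusion modulo the remainder.

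The main obstacle is the control of $\mathcal{R}(r)$: for $1<p<2$ the map $t\mapsto |t|^p$ is only $C^1$ at $0$, so no global pointwise Taylor bound of the required strength is available. I would split $\mathbb{S}^n=E_{\mathrm{good}}\cup E_{\mathrm{bad}}$ with $E_{\mathrm{good}}=\{|r|\leq 1/2\}$ and $E_{\mathrm{bad}}=\{|r|>1/2\}$. On $E_{\mathrm{good}}$ the pointwise bound $|(1+r)^p-1-pr-\tfrac{p(p-1)}{2}r^2|\leq C|r|^3$ is valid, and after interpolation between $L^p$ and $L^2$ one can extract a small prefactor from the $L^p$-smallness hypothesis so that the contribution is $o(1/n)\cdot\|r\|_2^2$. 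On $E_{\mathrm{bad}}$ Chebyshev gives $|E_{\mathrm{bad}}|\leq 2^p\|r\|_p^p$, and each piece of $\mathcal{R}(r)$ restricted to $E_{\mathrm{bad}}$ is controlled directly by $\|r\|_p^2\leq \delta\|g\|_p^2$. Choosing $\delta$ small (depending on $s$) and $n\geq n_0$ then yields $|\mathcal{R}(r)|\leq \frac{C_s}{2n}\|r\|_2^2$, which is absorbed into the leading term. The delicacy here — stemming from the non-Hilbert structure of $L^p$ and the fact that $\|r\|_p$ is only defined up to comparability because the $H^{-s}$-minimizer $\phi$ is not unique (cf.\ Lemma \ref{comparable}) — is precisely what forces the new strategy of this paper and makes the $L^p$-decomposition argument of \cite{CLT2} inapplicable in the range $s\geq 1$.
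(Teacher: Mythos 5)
Your setup is sound — the reduction to $g=1+r$ with $r$ orthogonal to spherical harmonics of degree $0$ and $1$, the Funk–Hecke eigenvalue computation, and the observation $p-1-\mu_k\geq\frac{n-2s}{n+2s}\cdot\frac{4s}{n+2s+2}$ for $k\geq 2$ all match the paper's framework. The genuine gap is in the control of $\mathcal R(r)$, and it cannot be repaired by the two-region split at height $1/2$.

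First, the good set contributes too much. On $\{|r|\leq 1/2\}$ the sharpest pointwise bound available is $\bigl|(1+r)^p-1-pr-\tfrac{p(p-1)}{2}r^2\bigr|\leq C\theta|r|^3$ (the factor $\theta=2-p$ appears because the cubic Taylor coefficient $\tfrac16p(p-1)(p-2)$ vanishes at $p=2$), so the good-set contribution to $\mathcal R(r)$ is at best of size $\tfrac{\theta}{2}\|r\|_2^2$. Since the leading spectral term is $(p-1-\mu_2)\|r\|_2^2 \approx\theta\|r\|_2^2$, this is comparable, not $o(1/n)\|r\|_2^2$; interpolation from $\|r\|_p$-smallness cannot help because $\int_{\{|r|\leq1/2\}}|r|^3/\|r\|_2^2$ can be bounded away from $0$ while $\|r\|_p\to 0$. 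To win a genuine slack factor you must cut at a small level $\gamma$ rather than at $1/2$, which is precisely what Lemma~\ref{q-estimate} does, producing the coefficient $\tfrac12 p(p-1)-2\gamma\theta$ in front of $r_1^2$.

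Second, and more fatally, the bad-set estimate fails: the claim that each piece of $\mathcal R(r)$ on $E_{\mathrm{bad}}$ is controlled by $\|r\|_p^2$ is false. Writing $\mathcal R(r)=\|1+r\|_p^2-1-(p-1)\|r\|_2^2$, you see that for $r$ large on a small set, $\|1+r\|_p^2-1$ grows like $\|r\|_p^2$ while $(p-1)\|r\|_2^2$ is arbitrarily larger, so $\mathcal R(r)\approx -(p-1)\|r\|_2^2$ — the same order as the spectral leading term, not $\frac{1}{n}\|r\|_2^2$, and certainly not $O(\|r\|_p^2)$. This reflects the fact that the spectral lower bound $\langle\mathcal P_{2s}r,r\rangle\leq\mu_2\|r\|_2^2$ is badly wasteful for $r$ concentrated on a small set: the true quantity to compare with is the $L^p$ norm, not the $L^2$ norm. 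The paper handles this by keeping the exact $p$-power behaviour at the top level via the term $(1-\varepsilon\theta)r_3^p$ in Lemma~\ref{q-estimate}, closing with the sharp HLS inequality in the bound on $I_3$; the intermediate level $r_2$ requires a separate measure-concentration argument (the bound on $I_2$) to show the low-degree projections are small. Your two-region scheme collapses these three regimes and omits both effects.

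Third, once you keep things exact on $E_{\mathrm{bad}}$, the nonlocal bilinear form $\langle\mathcal P_{2s}r,r\rangle$ cannot simply be restricted to a region: the cross-terms between the good and bad parts must be estimated, which is the role of Lemma~\ref{lem split}. Your plan does not address this. In short, the missing idea is the three-level truncation $r=r_1+r_2+r_3$ at $\gamma$ and $M$ together with the pointwise inequality (Lemma~\ref{q-estimate}) that interpolates between the $\theta\gamma$-corrected quadratic at the bottom and the honest $p$-power at the top — a two-piece Taylor-with-remainder argument cannot produce the required $O(1/n)$ gain.
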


We need to point out here that the constant $\delta=\frac{1}{2}\frac{\delta_0}{1+\delta_0}$, and $\delta_0$ comes from Lemma~\ref{local sta} below. In order to prove Lemma~\ref{local stability on sphere}, we only need to prove Lemma~\ref{local sta}. By conformal and scaling invariance
of the stability  of HLS inequality, we may assume below $g=1+r\geq 0$, and $r$ is orthogonal with spherical harmonics of degrees $0$ and $1$, which can be deduced from Lemma \ref{decompostition}.
Then (\ref{distance}) implies
 $$\|r\|_{\frac{2n}{n+2s}}^2\leq \delta\|1+r\|_{\frac{2n}{n+2s}}^2\leq \delta\left(1+\|r\|_{\frac{2n}{n+2s}}\right)^2\leq 2\delta\left(1+\|r\|_{\frac{2n}{n+2s}}^2\right).$$
That is $\|r\|_{\frac{2n}{n+2s}}^2\leq \frac{2\delta}{1-2\delta}=\delta_0$. Then Lemma \ref{local stability} can be implied by the following lemma.

\begin{lemma}\label{local sta}
For any fixed $s\in [1, \frac{n}{2})$, there exist $\delta_0\in(0,1)$, $n_0$ and $C_s$ which are dependent on $s$ such that for all $-1\leq r\in L^{\frac{2n}{n+2s}}(\mathbb{S}^n)$ satisfying
$$\left(\int_{\mathbb{S}^n} |r|^{\frac{2n}{n+2s}}d\sigma_\xi\right)^{\frac{n+2s}{n}}\leq \delta_0~~~\textit{and}~~~\int_{\mathbb{S}^n}r d\sigma_\xi=0=\int_{\mathbb{S}^n}w_jrd\sigma_\xi, ~~j=1,\cdots,n+1.$$
There holds
\begin{align*}
&\big(\int_{\mathbb{S}^n}(1+r)^{\frac{2n}{n+2s}}d\sigma_\xi\big)^{\frac{n+2s}{n}}-\langle\mathcal{P}_{2s}(1+r),1+r\rangle\geq \frac{1}{n}C_s\big(\int_{\mathbb{S}^n}|r|^{\frac{2n}{n+2s}}d\sigma_\xi\big)^{\frac{n+2s}{n}}
\end{align*}
when $n\geq n_0$.
\end{lemma}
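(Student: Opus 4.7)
The plan is to prove Lemma~\ref{local sta} by combining a spherical-harmonic decomposition of $r$ with sharp Taylor expansions of both sides of the HLS deficit around the constant extremizer $F \equiv 1$, in the spirit of the approach of \cite{DEFFL} for the Sobolev inequality but adapted to the non-Hilbertian $L^{\frac{2n}{n+2s}}$ setting, carefully tracking the $n$-dependence of every constant.

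First, decompose $r = \sum_{k\geq 2} Y_k$ into its spherical-harmonic components; the hypotheses $\int r\,d\sigma = 0$ and $\int w_j r\,d\sigma = 0$ are exactly $Y_0 = Y_1 = 0$, reflecting that these are the tangent directions to $M_{HLS}$ at the constant function. The operator $\mathcal{P}_{2s}$ is diagonal in this basis with normalized eigenvalues
$$\lambda_k = \frac{\Gamma\!\bigl(\tfrac{n-2s}{2}+k\bigr)\,\Gamma\!\bigl(\tfrac{n+2s}{2}\bigr)}{\Gamma\!\bigl(\tfrac{n+2s}{2}+k\bigr)\,\Gamma\!\bigl(\tfrac{n-2s}{2}\bigr)}, \qquad \lambda_0 = 1.$$
Writing $p = \frac{2n}{n+2s}$, a direct computation gives the crucial coincidence $\lambda_1 = \frac{n-2s}{n+2s} = p-1$ and the sharp asymptotic spectral gap
$$\lambda_1 - \lambda_2 = \frac{4s(n-2s)}{(n+2s)(n+2s+2)} \sim \frac{4s}{n} \qquad \text{as } n \to \infty.$$
This gap is the exact source of the $\tfrac{C_s}{n}$ lower bound and is consistent with the known upper bound $\tfrac{4s}{n+2+4s}$ of Chen--Frank--Weth.

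Next, I Taylor-expand the two halves of the deficit. On the HLS side, $\int r\,d\sigma = 0$ yields the clean identity $\langle \mathcal{P}_{2s}(1+r), 1+r\rangle = 1 + \sum_{k\geq 2}\lambda_k \|Y_k\|_2^2$. On the $L^p$ side, using the pointwise expansion $(1+t)^p = 1 + pt + \tfrac{p(p-1)}{2}t^2 + R(t)$ valid for $t \geq -1$ (with $1<p<2$), together with $\int r\,d\sigma = 0$ and raising to the $\frac{2}{p}$ power, one obtains
$$\|1+r\|_p^2 = 1 + (p-1)\|r\|_2^2 + \mathcal{E}(r).$$
Subtracting, exploiting $\lambda_1 = p-1$, and using $\|r\|_p \leq \|r\|_2$ on the probability-measure sphere, produces
$$\|1+r\|_p^2 - \langle\mathcal{P}_{2s}(1+r),1+r\rangle = \sum_{k\geq 2}(\lambda_1-\lambda_k)\|Y_k\|_2^2 + \mathcal{E}(r) \geq (\lambda_1-\lambda_2)\|r\|_2^2 + \mathcal{E}(r),$$
so the leading order contributes at least $\tfrac{C_s}{n}\|r\|_p^2$.

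The main obstacle is to estimate $\mathcal{E}(r)$ precisely enough to be absorbed into $\tfrac{C_s}{n}\|r\|_p^2$ under the $n$-independent smallness hypothesis $\|r\|_p^2 \leq \delta_0$. A naive pointwise bound $|R(t)| \leq C_p|t|^{2+\alpha}$ yields $|\mathcal{E}(r)| \leq C_p \|r\|_p^{2+\alpha}$ with an $n$-independent $C_p$, but this is not enough, because absorbing it into an $O(1/n)$ term would force $\|r\|_p \leq c\,n^{-1/\alpha}$. The plan is therefore to split $r = r_\leq + r_>$ with $r_\leq = r\,\mathbf{1}_{\{|r|\leq h\}}$ at a height $h$ to be chosen: on $r_\leq$ use the sharper Taylor bound $|R(r_\leq)|\leq C_p h^\alpha |r_\leq|^2$, calibrated so $C_p h^\alpha \leq \tfrac{1}{2}(\lambda_1-\lambda_2)$; on $r_>$ use crude $L^p$-bounds together with Chebyshev, which forces $|\{|r|>h\}|$ to be very small once $\|r\|_p$ is; cross terms are handled by H\"older, and the fractional Sobolev embedding on $\mathbb{S}^n$ (with $n$-uniform embedding constants at the relevant orders) is needed to control interactions between $r_>$ and the higher spherical-harmonic components. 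The delicate point is to calibrate $h$, the orthogonality gain from the $H^{-s}$-decomposition (which ensures $Y_0 = Y_1 = 0$ even though the minimization is in the HLS norm), and the $\tfrac{1}{n}$ spectral gap simultaneously, so as to extract an $n$-independent threshold $\delta_0$. This balancing is the technical heart of the argument and, unlike the $0 < s < 1$ case of \cite{CLT2}, cannot be reduced to Sobolev stability by duality.
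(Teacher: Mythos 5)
Your overall framework is sound at the level of the leading-order spectral heuristic: you correctly identify $\lambda_1 = A_{n,s}(1) = \frac{n-2s}{n+2s} = p-1$, the gap $\lambda_1 - \lambda_2 \sim 4s/n$, and the fact that the orthogonality $\int r = 0 = \int w_j r$ kills the degree-$0,1$ modes. This is indeed the mechanism that produces the $C_s/n$ in the paper's proof. However, there is a structural gap in how you set up the Taylor expansion that the paper is built precisely to avoid, and a single height-cut (your $r = r_\le + r_>$) does not repair it.

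The gap. Your expansion
$\|1+r\|_p^2 = 1 + (p-1)\|r\|_2^2 + \mathcal{E}(r)$
is meaningless for a general $r \in L^p(\mathbb{S}^n)$ with $p = \frac{2n}{n+2s} < 2$: on a compact domain $L^2 \subsetneq L^p$, so $\|r\|_2$ can be infinite while $\|1+r\|_p$ is finite. The subsequent step $\|1+r\|_p^2 - \langle \mathcal{P}_{2s}(1+r),1+r\rangle \ge (\lambda_1 - \lambda_2)\|r\|_2^2 + \mathcal{E}(r)$ is therefore a difference of two possibly infinite quantities, and $\|r\|_p \le \|r\|_2$ doesn't help you since you need a finite $\|r\|_2^2$ to appear at all. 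The spectral side $\langle \mathcal{P}_{2s} r, r\rangle = \sum_k \lambda_k \|Y_k\|_2^2$ does converge (this is exactly HLS, thanks to the decay $\lambda_k \sim k^{-2s}$), but the $L^p$-norm expansion does not.

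What the paper does and why one cut is not enough. The paper decomposes $r = r_1 + r_2 + r_3$ with two heights $\gamma < M$: $r_1 = \min\{r,\gamma\}$, $r_2 = \min\{(r-\gamma)_+, M-\gamma\}$, $r_3 = (r - M)_+$. The pieces $r_1, r_2$ are bounded (so in $L^2$), and only $r_3$ is purely an $L^p$-object. The expansion is done at the level of a pointwise inequality (Lemma \ref{q-estimate}) producing separate quadratic terms for $r_1, r_2$ and an $r_3^p$ term, and the HLS double integral is split by Lemma \ref{lem split} using the exact identity $\int_{\mathbb{S}^n}|\xi-\eta|^{2s-n}d\sigma_\eta = \text{const}$. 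The deficit is then organized into $I_1, I_2, I_3$. For $I_3$ one uses only HLS (so pure $L^p$, no $L^2$). For $I_1$ one uses the spectral gap plus a Chebyshev argument to transfer the orthogonality of $r$ (not of $r_1$) to near-orthogonality of $r_1$, which requires estimating $\int Y(r_2+r_3)$ and is where both cutting heights enter. For $I_2$ one cannot simply use the spectral gap at degree $2$ (it is only $O(1/n)$ and must beat an $O(1/n)$ error term with a large $n$-independent constant); instead the paper exploits boundedness $0 \le r_2 \le M - \gamma$ together with inequality (25) of \cite{DEFFL} to show the low spherical harmonic content of $r_2$ is small, and then uses the gap $1 - A_{n,s}(K) \gtrsim K/n$ at a \emph{fixed but large} degree $K$. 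This step is the technical heart and has no analogue in your sketch: with a single cut $r_\le + r_>$ you would have to use the degree-$2$ gap on a bounded piece carrying essentially all of $\|r\|_2$, and the constants do not close. In short, the three-piece decomposition, the double-integral splitting lemma, and the ``projection onto high degrees'' argument for the middle piece are all needed and are not reconstructible from the heuristic you describe.
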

\begin{remark}
As we explained in the introduction, we decompose $g$ in $H^{-s}(\mathbb{S}^n)$ instead of $L^{\frac{2n}{n+2s}}(\mathbb{S}^n)$ because we can obtain simple orthogonality condition.
\end{remark}

Now, let us prove Lemma \ref{local sta}. For any $r\geq -1$, define $r_1$, $r_2$ and $r_3$ by
$$r_1=\min\{r, \gamma\},\ \ r_2=\min\{(r-\gamma)_{+}, M-\gamma\},\ \ r_3=(r-M)_{+},$$
where $\gamma$ and $M$ are two parameters such that $0<\gamma<M$. We need the following lemma.
\begin{lemma}\label{q-estimate}
Given $\varepsilon>0$, $M>0$, $2>p_0>1$, and $\gamma\in (0, \frac{M}{2})$. There exists a positive constant $C_{p_0,\gamma,\varepsilon,M}$ such that for any
$r\geq -1$ and $1<p_0\leq p\leq 2$, there holds
\begin{equation}\begin{split}
(1+r)^p-1-pr&\geq (\frac{1}{2}p(p-1)-2\gamma\theta)r_1^2+(\frac{1}{2}p(p-1)-C_{p_0,\gamma,\varepsilon,M}\theta)r_2^2\\
&\ \ +2r_1r_2+2(r_1+r_2)r_2+(1-\varepsilon\theta)r_3^p,
\end{split}\end{equation}
where $\theta=2-p$.
\end{lemma}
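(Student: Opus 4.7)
The plan is to prove the inequality by a three-region case split on the value of $r$, determined by the thresholds $\gamma$ and $M$. I begin by introducing the scalar remainder
\begin{equation*}
g(r) := (1+r)^p - 1 - pr - \tfrac{1}{2}p(p-1)r^2,
\end{equation*}
which captures the deviation from the pure second-order expansion. The third-order Taylor formula gives $g(r) = -\tfrac{p(p-1)\theta}{6}(1+\xi)^{p-3}r^3$ for some $\xi$ between $0$ and $r$. Since $p(p-1)\le 2$ and $(1+\xi)^{p-3}\le 1$ on $[0,\infty)$, this yields $g(r)\ge 0$ for $r \in [-1, 0]$ and $|g(r)| \le \tfrac{\theta}{3}r^3$ for $r \ge 0$, together with $|g'(r)| \le \theta r^2$ and $|g''(r)| \le 2\theta r$ on $[0,\infty)$.

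In Case 1 ($-1\le r \le \gamma$) only $r_1 = r$ is nonzero and the target reduces to $g(r)\ge -2\gamma\theta r^2$: this is immediate for $r\le 0$, and for $r\in[0,\gamma]$ it follows from $|g(r)|\le \tfrac{\gamma\theta}{3}r^2$. In Case 2 ($\gamma < r \le M$) one has $r_1=\gamma$, $r_2=r-\gamma$, $r_3=0$, and the pivotal algebraic identity $2 - p(p-1) = \theta(p+1)$ reorganises the right-hand side into $\tfrac{1}{2}p(p-1)r^2 + \theta(p+1)\gamma(r-\gamma) - 2\gamma^3\theta - C\theta(r-\gamma)^2$, so the claim becomes $g(r)\ge \theta(p+1)\gamma(r-\gamma) - 2\gamma^3\theta - C\theta(r-\gamma)^2$. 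Expanding $g$ around $r=\gamma$ to second order and invoking the bounds on $g, g', g''$, this is nonnegativity of an explicit quadratic in $t:=r-\gamma\in[0,M-\gamma]$ with leading coefficient of order $(C - M)\theta$; a discriminant check then fixes an admissible $C = C(\gamma, M, p_0)$.

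In Case 3 ($r>M$) one has $r_1=\gamma$, $r_2=M-\gamma$, $r_3 = r - M > 0$, and the $(1-\varepsilon\theta)r_3^p$ term becomes active (and under the natural reading $2(r_1+r_2)r_3$ of the cross-term). A second application of $2-p(p-1)=\theta(p+1)$ shows, with $t = r - M$, that the inequality is equivalent to
\begin{equation*}
\Phi(t) + \delta(C) \ge 0, \qquad \Phi(t) := (1+M+t)^p - (1+M)^p - (p+2M) t - (1-\varepsilon\theta) t^p,
\end{equation*}
where $\delta(C)$ is precisely the Case 2 margin evaluated at $r = M$ (hence nonnegative and growing linearly in $C$). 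The function $\Phi$ vanishes identically at $p=2$, so for $1<p<2$ one has $\Phi = O(\theta)$ uniformly; quantitatively $\Phi(0) = g(M) = O(\theta)$, $\Phi'(0) < 0$, and $\Phi(t)\to +\infty$ like $\varepsilon\theta t^p$ as $t\to\infty$. A uniform lower bound $\Phi(t) \ge -K(M,\varepsilon,p_0)\theta$ is then obtained by estimating $|\Phi|$ on a bounded window $[0,T]$ via differentiation in $p$ and using the $\varepsilon\theta t^p$ surplus beyond $T$; choosing $C$ so that $\delta(C) \ge K\theta$ closes the case.

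The main obstacle is Case 3: the linear-in-$t$ obstruction $2(r_1+r_2)r_3 = 2Mt$ on the right-hand side is not controlled pointwise by the $(1+M+t)^p$ growth on the left when $p<2$, so one must genuinely exploit both the $\varepsilon\theta t^p$ margin at infinity and the $C\theta(M-\gamma)^2$ slack to absorb it. By contrast, Cases 1 and 2 are essentially careful Taylor bookkeeping, where the algebraic identity $2-p(p-1) = \theta(p+1)$ does all the combinatorial work.
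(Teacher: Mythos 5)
Your case structure matches the paper's, and Cases 1--2 are essentially correct and follow the same route: the preliminary bound $(1+r)^p \geq 1+pr+\tfrac12p(p-1)r^2 - \theta r_+^3$ (the paper's Lemma 5.1), the identity $2-p(p-1)=\theta(p+1)$, and a discriminant argument in $t=r-\gamma$. You also correctly noticed that the cross-term in the statement should read $2(r_1+r_2)r_3$ rather than $2(r_1+r_2)r_2$; the paper indeed uses $2(r_1+r_2)r_3$ in its Proposition and in the subsequent application via Lemma \ref{lem split}.

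Case 3, however, is where the real work lives, and your treatment of it is too thin to count as a proof. Two issues. First, a small slip: $\Phi(0)=(1+M)^p-(1+M)^p=0$, not $g(M)$. Second, and more importantly, the claimed uniform bound $\Phi(t)\geq -K(M,\varepsilon,p_0)\theta$ is exactly the crux of the lemma, and "differentiation in $p$ on $[0,T]$ plus the $\varepsilon\theta t^p$ surplus beyond $T$" does not by itself establish it. The difficulty: for $t$ large, the deficit $\Psi(t):=(1+M+t)^p-(1+M)^p-(p+2M)t-t^p$ is of size roughly $\theta\cdot t\ln t$, which is \emph{unbounded} in $t$, so one must quantitatively show that $\varepsilon\theta t^p$ overtakes it beyond an explicit, $p$-uniform threshold. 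The paper does this by splitting $r>M$ further into $M<r\leq M+N$ and $r>M+N$, applying the cubic Taylor bound to $(1+(1+M)/r_3)^p$ on the second sub-region, and establishing the key pointwise inequalities
\begin{align*}
p t^{p-1}-2t &\geq -\theta N^{-1}\bigl(1+2N^{2-p_0}\ln N\bigr)\,t^p, \\
\tfrac12 p(p-1) t^{p-2}-1 &\geq -\theta N^{-2}\bigl(\tfrac{p+1}{2}+N^{2-p_0}\ln N\bigr)\,t^p,
\end{align*}
for $t\geq N$. These show the $t^p$-growing part of the deficit has coefficient $O(N^{1-p_0}\ln N)\theta$, which can be made $\leq\varepsilon\theta$ by choosing $N$ large, after which the bounded window $M<r\leq M+N$ contributes only a fixed $O_{M,N}(\theta)$. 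Your proposal correctly identifies both the $\varepsilon\theta t^p$ surplus and the bounded-window argument as the two ingredients, but the quantitative estimates above (and the intermediate parameter $N$ that ties them together) are precisely what you leave out; without them the argument is not closed. So: right idea, essentially the paper's route, but the hardest step is only gestured at rather than carried out.
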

The proof of the lemma will be stated in the last section. The proof is similar to that of Corollary 2.12 in (\cite{DEFFL}), where a reversed estimate for $(1+r)^p-1-pr$ when $p\in [2,3]$ was established.

\subsection{Dividing the deficit}

First we split the double integral part of the deficit
$$\langle\mathcal{P}_{2s}r,r\rangle=\frac{|\mathbb{S}^n|\Gamma(\frac{n+2s}{2})}{2^{2s}\pi^{\frac n2}\Gamma(s)}\int_{\mathbb{S}^n}\int_{\mathbb{S}^n}\frac{r(\xi)r(\eta)}{|\xi-\eta|^{n-2s}}d\sigma_\xi d\sigma_\eta$$
 in the following lemma, which is crucial to our proof.
\begin{lemma}\label{lem split}
There holds
\begin{equation*}\begin{split}
&\langle\mathcal{P}_{2s}(r),r\rangle
\leq \sum_{i=1}^{3}\langle\mathcal{P}_{2s}(r_i),r_i\rangle+2\int_{\mathbb{S}^n}r_1(\xi)r_2(\xi)d\sigma_\xi\\
&\ \ +2\int_{\mathbb{S}^n}r_2(\xi)r_3(\xi)d\xi+2\int_{\mathbb{S}^n}r_1(\xi)r_3(\xi)d\sigma_\xi.
\end{split}\end{equation*}
\end{lemma}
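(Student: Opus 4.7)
The plan is to expand the quadratic form along the layers $r=r_1+r_2+r_3$ and then bound each of the three off-diagonal cross terms by a simple comparison argument. By self-adjointness of $\mathcal{P}_{2s}$ on $L^2(\mathbb{S}^n)$, I would first write
\begin{equation*}
\langle\mathcal{P}_{2s}(r),r\rangle=\sum_{i=1}^3\langle\mathcal{P}_{2s}(r_i),r_i\rangle+2\sum_{1\leq i<j\leq 3}\langle\mathcal{P}_{2s}(r_i),r_j\rangle,
\end{equation*}
so the task reduces to showing $\langle\mathcal{P}_{2s}(r_i),r_j\rangle\leq\int_{\mathbb{S}^n}r_i(\xi)r_j(\xi)\,d\sigma_\xi$ for each pair $i<j$.

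The two inputs I would use are: (i) the kernel $c_{n,s}|\xi-\eta|^{2s-n}$ is nonnegative, so $\mathcal{P}_{2s}$ is positivity-preserving; and (ii) $\mathcal{P}_{2s}(1)\equiv 1$. Fact (ii) follows because rotation invariance of the kernel forces $\mathcal{P}_{2s}(1)$ to be a constant, and that constant is pinned to $1$ by testing the sharp HLS inequality on the constant function $F\equiv 1\in M_{HLS}$ (both sides equal $1$ under the probability measure $d\sigma_\xi$). Taken together they yield a comparison principle: whenever $f\leq c$ pointwise on $\mathbb{S}^n$, $\mathcal{P}_{2s}(f)\leq c$ pointwise, by applying positivity to $c-f$.

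With this comparison principle in hand, the level-set structure of the layers immediately produces the desired cross-term bounds. By definition $r_1\leq\gamma$ pointwise with equality on $\mathrm{supp}(r_2)\cup\mathrm{supp}(r_3)=\{r>\gamma\}$, $r_2\leq M-\gamma$ pointwise with equality on $\mathrm{supp}(r_3)=\{r>M\}$, and $r_2,r_3\geq 0$. Hence $\mathcal{P}_{2s}(r_1)\leq\gamma$ and $\mathcal{P}_{2s}(r_2)\leq M-\gamma$ pointwise. Multiplying by the nonnegative functions $r_2$ or $r_3$, integrating, and using the equalities above to rewrite $\gamma\int_{\mathbb{S}^n}r_2\,d\sigma_\xi=\int_{\mathbb{S}^n}r_1r_2\,d\sigma_\xi$, $\gamma\int_{\mathbb{S}^n}r_3\,d\sigma_\xi=\int_{\mathbb{S}^n}r_1r_3\,d\sigma_\xi$, and $(M-\gamma)\int_{\mathbb{S}^n}r_3\,d\sigma_\xi=\int_{\mathbb{S}^n}r_2r_3\,d\sigma_\xi$, yields the three cross-term inequalities and hence the lemma.

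The one point that will require mild care is that $r_1$ may be negative, since we only assume $r\geq -1$, so absolute-value bounds are unavailable. The argument sidesteps this by using only the one-sided inequality $r_1\leq\gamma$, which combined with $r_2,r_3\geq 0$ is all that is needed. Apart from this, the proof is essentially organizational; the main conceptual step, and the one I expect readers to pause over, is the identity $\mathcal{P}_{2s}(1)\equiv 1$, which really is just a restatement that constants are HLS extremizers on the sphere.
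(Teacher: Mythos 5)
Your proof is correct and follows essentially the same route as the paper's: both hinge on the pointwise bounds $r_1\leq\gamma$, $r_2\leq M-\gamma$, the nonnegativity of $r_2,r_3$, the equalities $r_1=\gamma$ on $\operatorname{supp}(r_2)\cup\operatorname{supp}(r_3)$ and $r_2=M-\gamma$ on $\operatorname{supp}(r_3)$, and the normalization $\mathcal{P}_{2s}(1)\equiv 1$. The only cosmetic difference is that you package positivity plus normalization as a comparison principle (deriving $\mathcal{P}_{2s}(1)=1$ from testing HLS on constants), whereas the paper obtains the same bound by directly computing $\int_{\mathbb{S}^n}|\xi-\eta|^{2s-n}d\sigma_\eta$.
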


\begin{proof}
First we have
\begin{equation*}
\langle\mathcal{P}_{2s}(r),r\rangle=\sum_{i=1}^{3}\langle\mathcal{P}_{2s}(r_i),r_i\rangle
+2\langle\mathcal{P}_{2s}(r_1),r_2\rangle
 +2\langle\mathcal{P}_{2s}(r_2),r_3\rangle+2\langle\mathcal{P}_{2s}(r_1),r_3\rangle.
\end{equation*}
For the term $\langle\mathcal{P}_{2s}(r_1),r_2\rangle$, using the fact $r_1\leq \gamma$, $r_1(\xi)=\gamma$ when $r(\xi)\geq \gamma$ and $r_2=0$ when $r(\xi)<\gamma$, we have
\begin{equation*}\begin{split}
&\langle\mathcal{P}_{2s}(r_1),r_2\rangle
=\frac{|\mathbb{S}^n|\Gamma(\frac{n+2s}{2})}{2^{2s}\pi^{\frac n2}\Gamma(s)}\int_{\mathbb{S}^n}\int_{\mathbb{S}^n}\frac{r_{1}(\eta)}{|\xi-\eta|^{n-2s}}d\sigma_{\eta}r_{2}(\xi)d\sigma_{\xi}\\
&\ \ =\frac{|\mathbb{S}^n|\Gamma(\frac{n+2s}{2})}{2^{2s}\pi^{\frac n2}\Gamma(s)}\int_{\{r(\xi)\geq \gamma\}}\int_{\mathbb{S}^n}\frac{r_{1}(\eta)}{|\xi-\eta|^{n-2s}}d\sigma_{\eta}r_2(\xi)d\sigma_{\xi}\\
&\ \ \leq \frac{|\mathbb{S}^n|\Gamma(\frac{n+2s}{2})}{2^{2s}\pi^{\frac n2}\Gamma(s)}\int_{\{r(\xi)\geq \gamma\}}\int_{\mathbb{S}^n}\frac{\gamma}{|\xi-\eta|^{n-2s}}d\sigma_{\eta}r_2(\xi)d\sigma_{\xi}\\
&\ \ =\int_{r(\xi)\geq \gamma}\gamma r_2(\xi)d\sigma_{\xi}=\int_{\mathbb{S}^n} r_1(\xi)r_2(\xi)d\sigma_{\xi},
\end{split}\end{equation*}
where we use the fact $\int_{\mathbb{S}^n}|\xi-\eta|^{-n+2s}d\sigma_{\eta}=\frac{2^2s\pi^{n/2}\Gamma(s)}{\Gamma(n/2+s)|\mathbb{S}^n|}$.
Similarly, we can also derive that
\begin{equation*}\begin{split}
&\langle\mathcal{P}_{2s}(r_1),r_3\rangle
 =\frac{|\mathbb{S}^n|\Gamma(\frac{n+2s}{2})}{2^{2s}\pi^{\frac n2}\Gamma(s)}\int_{\mathbb{S}^n}\int_{\mathbb{S}^n}\frac{r_{1}(\eta)}{|\xi-\eta|^{n-2s}}d\sigma_{\eta}r_{3}(\xi)d\sigma_{\xi}\\
&\ \ \leq \frac{|\mathbb{S}^n|\Gamma(\frac{n+2s}{2})}{2^{2s}\pi^{\frac n2}\Gamma(s)}\int_{\{r(\xi)\geq M\}}\int_{\mathbb{S}^n}\frac{\gamma}{|\xi-\eta|^{n-2s}}d\sigma_{\eta}r_3(\xi)d\sigma_{\xi}\\
&\ \ =\int_{\{r(\xi)\geq M\}}\gamma r_3(\xi)d\sigma_{\xi}=\int_{\mathbb{S}^n} r_1(\xi)r_3(\xi)d\sigma_{\xi},
\end{split}\end{equation*}

and \begin{equation*}\begin{split}
&\langle\mathcal{P}_{2s}(r_2),r_3\rangle
 =\frac{|\mathbb{S}^n|\Gamma(\frac{n+2s}{2})}{2^{2s}\pi^{\frac n2}\Gamma(s)}\int_{\{r(\xi)\geq M\}}\int_{\mathbb{S}^n}\frac{r_{2}(\eta)}{|\xi-\eta|^{n-2s}}d\sigma_{\eta}r_3(\xi)d\sigma_{\xi}\\
&\ \ \leq \frac{|\mathbb{S}^n|\Gamma(\frac{n+2s}{2})}{2^{2s}\pi^{\frac n2}\Gamma(s)}\int_{\{r(\xi)\geq M\}}\int_{\mathbb{S}^n}\frac{M-\gamma}{|\xi-\eta|^{n-2s}}d\sigma_{\eta}r_3(\xi)d\sigma_{\xi}\\
&\ \ =\int_{\{r(\xi)\geq M\}}(M-\gamma) \big(r(\xi)-M\big)d\sigma_{\xi}=\int_{\mathbb{S}^n} r_2(\xi)r_3(\xi)d\sigma_{\xi}.
\end{split}\end{equation*}
Then we accomplish the proof of Lemma \ref{lem split}.
\end{proof}

Then let us handle the  terms $\langle\mathcal{P}_{2s}(1+r),1+r\rangle$ and $\left(\int_{\mathbb{S}^{n}}(1+r)^{p}\,d\sigma_{\xi}\right)^{2/p}$. Since $r$ has mean zero, then
$\langle\mathcal{P}_{2s}(r),1\rangle=0$. Along with Lemma (\ref{lem split}),
direct computation gives that
\begin{align}\label{est of L2}\nonumber
& \langle\mathcal{P}_{2s}(1+r),1+r\rangle=\langle\mathcal{P}_{2s}(1),1\rangle + \langle\mathcal{P}_{2s}(r),r\rangle\\\nonumber
&\leq 1+\sum_{i=1}^{3}\langle\mathcal{P}_{2s}(r_i),r_i\rangle+2\int_{\mathbb{S}^n}r_1(\xi)r_2(\xi)d\sigma_\xi\\
&\ \ +2\int_{\mathbb{S}^n}r_2(\xi)r_3(\xi)d\xi+2\int_{\mathbb{S}^n}r_1(\xi)r_3(\xi)d\sigma_\xi.
\end{align}

Denote $p=\frac{2n}{n+2s}$, $\theta=2-p=\frac{4s}{n+2s}$ and assume $n>14s$, then $\frac 74<p<2$ and $0<\theta<1$. Given two parameters $\epsilon_1>0$, $\epsilon_2>0$, we apply Lemma \ref{q-estimate} with (an arbitrary choice of $M\geq 2\gamma$),
$$\gamma=\frac{\epsilon_1}{2},~~~~\epsilon=\epsilon_2,~~~C_{p_0,\gamma,\epsilon,M}=C_{\epsilon_1,\epsilon_2} ,$$
the inequality $(1+x)^{\frac{2}{p}}\geq 1+\frac{2}{p}x$ and $\int_{\mathbb{S}^n}r d\sigma_\xi=0$
to derive that
\begin{align}\label{est of Lq}\nonumber
&\left(\int_{\mathbb{S}^{n}}(1+r)^{p}\,d\sigma_{\xi}\right)^{2/p}\\\nonumber
 &\ \ \geq 1 + (p-1-\frac{2}{p}\epsilon_1\theta)\int_{\mathbb{S}^{n}}r_1^{2}\,d\sigma_{\xi} + (p-1-\frac{2}{p}C_{\epsilon_1,\epsilon_2}\theta)\int_{\mathbb{S}^{n}}r_2^{2}\,d\sigma_{\xi}  \\\nonumber
 &\ \ \ \ + \frac{4}{p}\int_{\mathbb{S}^{n}}(r_1r_2)\,d\sigma_{\xi} + \frac{4}{p}\int_{\mathbb{S}^{n}}(r_1+r_2)r_3\,d\sigma_{\xi} + \frac{2}{p}(1-\epsilon_2\theta)\int_{\mathbb{S}^{n}}r_3^{p}\,d\sigma_{\xi}
\\&\ \ \geq 1 + (p-1-\frac{2}{p}\epsilon_1\theta)\int_{\mathbb{S}^{n}}r_1^{2}\,d\sigma_{\xi} + (p-1-\frac{2}{p}C_{\epsilon_1,\epsilon_2}\theta)\int_{\mathbb{S}^{n}}r_2^{2}\,d\sigma_{\xi}  \\\nonumber
&\ \ \ \ \ + 2\int_{\mathbb{S}^{n}}r_1r_2\,d\sigma_{\xi} + 2\int_{\mathbb{S}^{n}}(r_1+r_2)r_3\,d\sigma_{\xi} + \frac{2}{p}(1-\epsilon_2\theta)\int_{\mathbb{S}^{n}}r_3^{p}\,d\sigma_{\xi}.
\end{align}
Combining Lemma \ref{lem split}, (\ref{est of L2}) and (\ref{est of Lq}), we can divide the deficit as follows,
\begin{footnotesize}
\begin{align*}
&\left(\int_{\mathbb{S}^{n}}(1+r)^{p}\,d\sigma_\xi\right)^{2/p}-\langle\mathcal{P}_{2s}(1+r),1+r\rangle\\
&\ \ \geq \left(\frac{2}{p}\epsilon_1\vartheta\theta \int_{\mathbb{S}^n}|r_1|^2d\sigma_{\xi}+\frac{2}{p}C_{\epsilon_1,\epsilon_2}\vartheta\theta \int_{\mathbb{S}^n}|r_2|^2d\sigma_{\xi}+\frac{2}{p}\epsilon_2\vartheta\theta \int_{\mathbb{S}^n}|r_3|^pd\sigma_{\xi}\right)\\
 &\ \ \ \ +\left(p-1-\frac{2}{p}\epsilon_1(1+\vartheta)\theta\right)\int_{\mathbb{S}^{n}}r_1^{2}\,d\sigma_{\xi}-\langle\mathcal{P}_{2s}(r_1),r_1\rangle\\
 &\ \ \ \  +\left(p-1-\frac{2}{p}C_{\epsilon_1,\epsilon_2}(1+\vartheta)\theta\right)\int_{\mathbb{S}^{n}}r_2^{2}\,d\sigma_{\xi}-\langle\mathcal{P}_{2s}(r_2),r_2\rangle\\
&\ \ \ \ +\frac{2}{p}\left(1-\epsilon_2(1+\vartheta)\theta\right)\int_{\mathbb{S}^{n}}r_3^{p}\,d\sigma_{\xi}-\langle\mathcal{P}_{2s}(r_3),r_3\rangle.\\
\end{align*}
\end{footnotesize}
Let us define
\begin{align*}
I_1:&=\left(p-1-\frac{2}{p}\epsilon_1(1+\vartheta)\theta\right)\int_{\mathbb{S}^{n}}r_1^{2}\,d\sigma_{\xi}-\langle\mathcal{P}_{2s}(r_1),r_1\rangle\\
 &\ \ +\sigma_0\theta\int_{\mathbb{S}^{n}}(r_2^{2}+r_3^p)d\sigma_{\xi},
\\I_2&:=\left(p-1-\frac{2}{p}C_{\epsilon_1,\epsilon_2}(1+\vartheta)\theta-\sigma_0\theta\right)\int_{\mathbb{S}^{n}}r_2^{2}\,d\sigma_{\xi}-\langle\mathcal{P}_{2s}(r_2),r_2\rangle,\\
\\I_3&:=\left(\frac{2}{p}\left(1-\epsilon_2(1+\vartheta)\theta\right)-\sigma_0\theta\right)\int_{\mathbb{S}^{n}}r_3^{p}\,d\sigma_{\xi}-\langle\mathcal{P}_{2s}(r_3),r_3\rangle,\\
\end{align*}
where the parameter $\sigma_0 > 0$ will be determined later. To summarize, we have
\begin{align*}
&\left(\int_{\mathbb{S}^{n}}(1+r)^{p}\,d\sigma_\xi\right)^{2/p}-\langle\mathcal{P}_{2s}(1+r),1+r\rangle\\
&\ \ \geq \left(\frac{2}{p}\epsilon_1\vartheta\theta \int_{\mathbb{S}^n}|r_1|^2d\sigma_{\xi}+\frac{2}{p}C_{\epsilon_1,\epsilon_2}\vartheta\theta \int_{\mathbb{S}^n}|r_2|^2d\sigma_{\xi}+\frac{2}{p}\epsilon_2\vartheta\theta \int_{\mathbb{S}^n}|r_3|^pd\sigma_{\xi}\right)+ \sum\limits_{k=1}^{3}I_k.
\end{align*}
Using the fact $1>\|r_3\|_{p}^{p}\geq \|r_3\|_p^2$ and $\|r_1\|_2\geq \|r_1\|_p,~~\|r_2\|_2\geq \|r_2\|_p$ by H$\ddot{o}$lder's inequality, we have
$$\left(\frac{2}{p}\epsilon_1\vartheta\theta \int_{\mathbb{S}^n}|r_1|^2d\sigma_{\xi}+\frac{2}{p}C_{\epsilon_1,\epsilon_2}\vartheta\theta \int_{\mathbb{S}^n}|r_2|^2d\sigma_{\xi}+\frac{2}{p}\epsilon_2\vartheta\theta \int_{\mathbb{S}^n}|r_3|^pd\sigma_{\xi}\right)\geq C_{\epsilon_1,\epsilon_2,\vartheta}\theta\|r\|_{p}^2.$$
In the following, we only need to show that $I_3$, $I_1$ and $I_2$ are nonnegative respectively. More precisely, we will prove that for $\epsilon_1\leq \frac{1}{16}$, $\epsilon_2\leq \frac{1}{8}$, $\vartheta\leq \frac{1}{2}$ and $n\geq 14 s$,
there exists a constant $0<\delta_0<1$, independent of $n$, such that for  all $-1\leq r\in L^{\frac{2n}{n+2s}}(\mathbb{S}^n)$ satisfying
$$\left(\int_{\mathbb{S}^n}|r|^{\frac{2n}{n+2s}}d\sigma_\xi\right)^{\frac{n+2s}{n}}\leq \delta_0~~~\textit{and}~~~\int_{\mathbb{S}^n}r d\sigma_\xi=0=\int_{\mathbb{S}^n}w_jrd\sigma_\xi, ~~j=1,\cdots,n+1,$$
there holds $I_i\geq 0$, $i=1,2,3$.

\subsection{Bound on $I_3$}
Let us prove $I_3\geq 0$. Choose $\vartheta=\frac{1}{2}$, $\epsilon_2\leq \frac{1}{8}$ and $\sigma_0=\frac{1}{8}$. Then
\begin{equation}\begin{split}\nonumber
\left(\frac{2}{p}\left(1-\epsilon_2(1+\vartheta)\theta\right)-\sigma_0\theta\right)&=1+\frac{2s}{n}-\frac{n+2s}{n}\epsilon_2(1+\vartheta)\frac{4s}{n+2s}-\sigma_0 \frac{4s}{n+2s}\\
&\geq 1+\frac{1}{4}\frac{2s}{n}.
\end{split}\end{equation}
Using the fact $\|r_3\|^p_{L^p(\mathbb{S}^n)}\geq \|r_3\|^2_{L^p(\mathbb{S}^n)}$ and the Hardy-Littlewood-Sobolev inequality, we obtain
\begin{equation*}\begin{split}
I_3&=\left(\frac{2}{p}\left(1-\epsilon_2(1+\vartheta)\theta\right)-\sigma_0\theta\right)\int_{\mathbb{S}^{n}}r_3^{p}\,d\sigma_{\xi}-\langle\mathcal{P}_{2s}(r_3),r_3\rangle\\
&\geq \left(\frac{2}{p}\left(1-\epsilon_2(1+\vartheta)\theta\right)-\sigma_0\theta\right)\|r_3\|_{L^p(\mathbb{S}^n)}^2-\langle\mathcal{P}_{2s}(r_3),r_3\rangle\\
&\geq \|r_3\|_{L^p(\mathbb{S}^n)}^2-\langle\mathcal{P}_{2s}(r_3),r_3\rangle\geq 0.
\end{split}\end{equation*}

\subsection{Bound on $I_1$}
In this subsection, we will prove $I_1\geq 0$. Let $\tilde{Y}_{l,m}$ be the $L^{2}$-normalized spherical harmonics with respect to the uniform probability measure on the sphere for any $m=1,2,...,N(n,l)$, where
$$N(n,0)=1~~\text{and}~~N(n,l)=\frac{(2l+n-1)\Gamma(l+n-1)}{\Gamma(l+1)\Gamma(n)}.$$
 Let $\widetilde{r_1}$ be the orthogonal projection of $r_1$ onto the space of spherical harmonics of degree $\geq 2$, that is,
\begin{align*}
\widetilde{r_1}=r_1 - \int_{\mathbb{S}^{n}}r_1d\sigma_\xi - \sum_{j=1}^{n+1}\big(\tilde{Y}_{1,j}\int_{\mathbb{S}^{n}}r_1\tilde{Y}_{1,j}d\sigma_\xi\big).
\end{align*}
By the orthogonality of spherical harmonics, we have
\begin{align}\label{sph of r_1}
\int_{\mathbb{S}^n}r_1^2d\sigma_\xi=\int_{\mathbb{S}^n}\widetilde{r_1}^2d\sigma_\xi+(\int_{\mathbb{S}^n}r_1d\sigma_\xi)^2+\sum_{j=1}^{n+1}\big(\int_{\mathbb{S}^{n}}r_1\tilde{Y}_{1,m}d\sigma_\xi\big)^2.
\end{align}
By the Funk-Hecke formula (see \cite[Eq.~(17)]{Be1993} and also  \cite[Corollary 4.3]{FL2}), we have
\begin{align}\label{Funk-Heck}
\langle\mathcal{P}_{2s}(r_1),r_1\rangle=\sum_{l=1}^{\infty}A_{n,s}(l)\sum_{m=1}^{N(n,l)}(\int_{\mathbb{S}^n}r_1\tilde{Y}_{l,m}d\sigma_\xi)^2
\end{align}
where $A_{n,s}(l)=\frac{\Gamma(\frac{n}{2}+s)\Gamma(\frac{n}{2}-s+l)}{\Gamma(\frac{n}{2}-s)\Gamma(\frac{n}{2}+s+l)}$.

Then using (\ref{sph of r_1}), (\ref{Funk-Heck}) and the fact that $A_{n,s}(l)$ is decreasing on $l$, we can estimate $I_1$ as follows,

\begin{align*}
I_1&=\left(p-1-\frac{2}{p}\epsilon_1(1+\vartheta)\theta\right)\int_{\mathbb{S}^{n}}r_1^{2}\,d\sigma_{\xi}-\langle\mathcal{P}_{2s}(r_1),r_1\rangle
+\sigma_0\theta\int_{\mathbb{S}^{n}}(r_2^{2}+r_3^p)d\sigma_{\xi}\\
&\geq \left(p-1-\frac{2}{p}\epsilon_1(1+\vartheta)\theta\right)\int_{\mathbb{S}^n}\widetilde{r_1}^2d\sigma_\xi+\left(p-1-\frac{2}{p}\epsilon_1(1+\vartheta)\theta\right)\sum_{j=1}^{n+1}\big(\int_{\mathbb{S}^n}r_1\tilde{Y}_{1,j}d\sigma_\xi\big)^2\\
&\ \ +\left(p-1-\frac{2}{p}\epsilon_1(1+\vartheta)\theta\right)(\int_{\mathbb{S}^n}r_1d\sigma_\xi)^2 -A_{n,s}(2)\int_{\mathbb{S}^n}\widetilde{r_1}^2d\sigma_\xi-A_{n,s}(0)(\int_{\mathbb{S}^n}r_1d\sigma_\xi)^2\\
&\ \ -A_{n,s}(1)\sum_{j=1}^{n+1}\big(\int_{\mathbb{S}^{n}}r_1\tilde{Y}_{1,j}d\sigma_\xi\big)^2+\frac{4s}{n+2s}\sigma_0\int_{\mathbb{S}^{n}}(r_2^{2}+r_3^{p})d\sigma_\xi,
\end{align*}
where $$A_{n,s}(1)=\frac{\Gamma(\frac{n}{2}+s)\Gamma(\frac{n}{2}-s+1)}{\Gamma(\frac{n}{2}-s)\Gamma(\frac{n}{2}+s+1)}=1-\frac{4s}{n+2s}$$
and $$A_{n,s}(2)=\frac{\Gamma(\frac{n}{2}+s)\Gamma(\frac{n}{2}-s+2)}{\Gamma(\frac{n}{2}-s)\Gamma(\frac{n}{2}+s+2)}=(1-\frac{4s}{n+2s})\cdot(1-\frac{4s}{n+2s+2}).$$
Since
\begin{equation}\begin{split}\nonumber
& p-1-\frac{2}{p}\epsilon_1(1+\vartheta)\theta-A_{n,s}(2)\\
&= \frac{4s}{n+2+2s}-\frac{n+2s}{n}\epsilon_1(1+\vartheta)\frac{4s}{n+2s}-\frac{16s^2}{(n+2+2s)(n+2s)},\\
&\geq \frac{4s}{n+2s}(1/4-2\epsilon_1),
\end{split}\end{equation}
$$\left(\frac{4s}{n+2s}\right)^{-1}\left(p-1-\frac{2}{p}\epsilon_1(1+\vartheta)\theta-A_{n,s}(1)\right)=-\frac{n+2s}{n}\epsilon_1(1+v),$$
and $$\left(\frac{4s}{n+2s}\right)^{-1}\left(p-2-\frac{2}{p}\epsilon_1(1+\vartheta)\theta\right)=-1-\frac{n+2s}{n}\epsilon_1(1+v),$$
recalling that $\vartheta=1/2$, $\epsilon_1\leq 1/16$ and $n\geq 14s$,  we have
\small{\begin{equation}\label{I_1}\begin{split}
&I_1\left(\frac{4s}{n+2s}\right)^{-1}\\
&\geq \left(\frac{4s}{n+2s}\right)^{-1}\left(p-1-\frac{2}{p}\epsilon_1(1+\vartheta)\theta-A_{n,s}(1)\right)\sum_{j=1}^{n+1}\big(\int_{\mathbb{S}^n}r_1\tilde{Y}_{1,j}d\sigma_\xi\big)^2\\
&\ \ \ \ \ +\left(\frac{4s}{n+2s}\right)^{-1}\left(p-2-\frac{2}{p}\epsilon_1(1+\vartheta)\theta\right)(\int_{\mathbb{S}^n}r_1d\sigma_\xi)^2+\sigma_0\int_{\mathbb{S}^{n}}(r_2^{2}+r_3^{p})d\sigma_\xi\\
&\ \ \geq \frac{1}{8}\int_{\mathbb{S}^n}\widetilde{r_1}^2d\sigma_\xi
-\frac{17}{14}(\int_{\mathbb{S}^n}r_1d\sigma_\xi)^2-\frac{3}{14}\sum_{j=1}^{n+1}\big(\int_{\mathbb{S}^{n}}r_1\tilde{Y}_{1,j}d\sigma_\xi\big)^2\\
&\ \ \ \ \ +\sigma_0\int_{\mathbb{S}^{n}}(r_2^{2}+r_3^{p})d\sigma_\xi.\\
\end{split}\end{equation}}
 Next we will show that $I_1 \geq 0$.
Let $Y$ be one of the functions 1 and $\sum_{j=1}^{n+1}a_j\tilde{Y}_{1,j}$, where $a_j \in \mathbb{R}$. Using
the fact $$\int_{\mathbb{S}^{n}}Yrd\sigma_\xi=0$$
and H$\ddot{o}$lder's inequality we can get
\begin{equation}\begin{split}\label{adint1}
\left(\int_{\mathbb{S}^{n}}Yr_1\,d\sigma_\xi\right)^{2}& = \left(\int_{\mathbb{S}^{n}}Y(r_2+r_3)\,d\sigma_\xi \right)^{2}\\
 &\leq \|Y\|^{2}_{L^{\frac{2p}{2p-3}}(\mathbb{S}^{n})}\left(\int_{\{r_2+r_3>0\}}d\sigma_\xi\right)^{\frac{1}{p}} \|r_2+r_3\|^{2}_{L^{p}(\mathbb{S}^{n})}\\
 & \leq \|Y\|^{2}_{L^{7}(\mathbb{S}^{n})}\left(\int_{\{r_2+r_3>0\}}d\sigma_\xi\right)^{\frac{1}{p}} \|r_2+r_3\|^{2}_{L^{p}(\mathbb{S}^{n})},
\end{split}\end{equation}
where in the last inequality we have used the fact $\frac{2p}{2p-3}\leq 7$ when $n>14s$. Since $\{r_2+r_3>0\} \subset \{r_1 \geq \gamma\}$, we have
\begin{equation}\label{adint2}\begin{split}
\int_{\{r_2+r_3>0\}}d\sigma_\xi \leq \int_{\{r_1\geq \gamma\}}d\sigma_\xi \leq \frac{1}{\gamma^{p}}\int_{\mathbb{S}^{n}}|r_1|^{p}\,d\sigma_\xi = \frac{1}{\gamma^{p}}\|r_1\|^{p}_{L^{p}(\mathbb{S}^{n})}.
\end{split}\end{equation}
By \eqref{adint1} and \eqref{adint2},  we derive that
\begin{align*}
\left(\int_{\mathbb{S}^{n}}Yr_1\,d\sigma_\xi\right)^{2} \leq \|Y\|^{2}_{L^{7}(\mathbb{S}^{n})}\frac{1}{\gamma}\|r_1\|_{L^{p}(\mathbb{S}^{n})} \|r_2+r_3\|^{2}_{L^{p}(\mathbb{S}^{n})}.
\end{align*}
Note
\begin{equation}\begin{split}\nonumber
\int_{\mathbb{S}^n}|r_2+r_3|^pd\sigma_{\xi}&=\int_{\{r(\xi)\geq \gamma\}}|r_2+r_3|^pd\sigma_{\xi}\\
&\leq \int_{\{r(\xi)\geq \gamma\}}|r_1+r_2+r_3|^pd\sigma_{\xi}\\
&\leq \int_{\mathbb{S}^n}|r|^pd\sigma_{\xi}\leq \delta_0^{\frac{p}{2}}\leq \delta_0^{\frac{1}{2}}.
\end{split}\end{equation}
Combining the above estimates, we get that
\begin{equation}\label{int1}
\left(\int_{\mathbb{S}^{n}}Yr_1\,d\sigma_\xi\right)^{2}\leq \|Y\|^{2}_{L^{7}(\mathbb{S}^{n})}\frac{\sqrt{\delta_0}}{\gamma} \|r_1\|_{L^{p}(\mathbb{S}^{n})}\|r_2+r_3\|_{L^p(\mathbb{S}^n)}.
\end{equation}
If $Y = 1$, \eqref{int1} directly gives that
\begin{align}\label{adint3}
\left(\int_{\mathbb{S}^{n}}r_1\,d\sigma_\xi\right)^{2} \leq \frac{\sqrt{\delta_0}}{\gamma} \|r_1\|_{L^{p}(\mathbb{S}^{n})}\|r_2+r_3\|_{L^p(\mathbb{S}^n)}.
\end{align}
From [\cite{Du}, Theorem 1], we know that for any $L^2$-normalized spherical harmonic $Y$ of degree $k\in N$, we have $\|Y\|_{L^p(\mathbb{S}^n)}\leq (p-1)^{\frac{k}{2}}$ for any $p\geq 2$. Then we can deduce that if $Y = \sum_{j=1}^{n+1}a_j\tilde{Y}_{1,j}$, then
$$\|Y\|_{L^7(\mathbb{S}^n)}=(\sum_{j=1}^{n+1}a_j^2)^{\frac{1}{2}}\|\frac{Y}{\|Y\|_{L^2(\mathbb{S}^n)}}\|_{L^7(\mathbb{S}^n)}\leq (\sum_{j=1}^{n+1}a_j^2)^{\frac{1}{2}}\sqrt{6}.$$
Pick $a_j=\int_{\mathbb{S}^{n}}  \tilde{Y}_{1,j}r_1d\sigma_\xi$, then it follows that
\begin{equation}\label{adint4}\begin{split}
\sum_{j=1}^{n+1}\big(\int_{\mathbb{S}^{n}}r_1\tilde{Y}_{1,j}d\sigma_\xi\big)^2&= (\sum_{j=1}^{n+1}a_j^2)^{-1}(\int_{\mathbb{S}^{n}}Yr_1\,d\sigma_\xi)^{2}\\
 &\leq (\sum_{j=1}^{n+1}a_j^2)^{-1} \frac{\sqrt{\delta_0}}{\gamma}\|Y\|_{L^7(\mathbb{S}^n)}^2 \|r_1\|_{L^{p}(\mathbb{S}^{n})}\|r_2+r_3\|_{L^p(\mathbb{S}^n)}\\
 &\leq \frac{6\sqrt{\delta_0}}{\gamma}\|r_1\|_{L^{p}(\mathbb{S}^{n})}\|r_2+r_3\|_{L^p(\mathbb{S}^n)}.
\end{split}\end{equation}
Gathering \eqref{I_1}, \eqref{adint3} and \eqref{adint4} and the fact $\|r_2\|_{L^2(\mathbb{S}^n)}^2\geq\|r_2\|_{L^p(\mathbb{S}^n)}^2$, $\|r_3\|_{L^p(\mathbb{S}^n)}^p\geq\|r_3\|_{L^p(\mathbb{S}^n)}^2$, we conclude that
\small{\begin{equation}\begin{split} \nonumber
I_1\big(\frac{4s}{n+2s}\big)^{-1}
&\geq \frac{1}{8}\int_{\mathbb{S}^n}r_1^2d\sigma_\xi+\sigma_0\left(\|r_2\|_{L^p(\mathbb{S}^n)}^2+\|r_3\|_{L^p(\mathbb{S}^n)}^2\right)\\
&\ \ -\frac{17}{14}\frac{\sqrt{\delta_0}}{\gamma} \|r_1\|_{L^{p}(\mathbb{S}^{n})}\|r_2+r_3\|_{L^p(\mathbb{S}^n)}\\
&\ \ -\frac{3}{14}\frac{6\sqrt{\delta_0}}{\gamma}\|r_1\|_{L^{p}(\mathbb{S}^{n})}\|r_2+r_3\|_{L^p(\mathbb{S}^n)}.
\end{split}\end{equation}}
Then by the Cauchy-Swartz inequality, $I_1$ is nonnegative if
\begin{align}\label{ine for I_1}
2\sqrt{\frac{1}{8}}\sqrt{\sigma_0}\frac{1}{\sqrt{2}}\geq \frac{17}{14}\frac{\sqrt{\delta_0}}{\gamma}+\frac{3}{14}\frac{6\sqrt{\delta_0}}{\gamma}.
\end{align}
Recall that $\sigma_0=\frac{1}{8}$, $\gamma=\frac {\epsilon_1}{2}$ and denote
\begin{align}\label{delta1}
\delta_1:= \frac{98\epsilon_1}{(280)^2}.
\end{align}
Let us choose $\delta_0\leq \delta_1$, then
 inequality (\ref{ine for I_1}) holds. It is obvious that $\delta_1$ is independent of $n$.

\subsection{Bound on $I_2$}
In this subsection, we will prove $I_2\geq 0$. From \cite{DEFFL} ( see inequality (25) in \cite{DEFFL}), we know that for any $L^{2}$-normalized spherical harmonic $Y$ of degree $k \in \mathbb{N}$
\begin{align}\nonumber
\int_{\mathbb{S}^{n}}Yr_2\,d\sigma_{\xi} \leq 3^{\frac{k}{2}}\gamma^{-\frac{p}{4}}\delta_0^{\frac{p}{8}}\|r_2\|_{L^{2}(\mathbb{S}^{n})}.
\end{align}
Let us denote by $\pi_{k}r_2$ the projection of $r_2$ onto spherical harmonics of degree $k$, and let $Y=\frac{\pi_{k}r_2}{\|\pi_{k}r_2\|_2}$, then there holds
\begin{align*}
\|\pi_kr_2\|_{L^{2}(\mathbb{S}^{n})} \leq 3^{\frac{k}{2}}\gamma^{-\frac{p}{4}}\delta_0^{\frac{p}{8}}\|r_2\|_{L^{2}(\mathbb{S}^{n})}.
\end{align*}
For any $K \in \mathbb{N}$, we denote by $\Pi_Kr_2:= \sum_{k<K}\pi_{k}r_2$ the projection of $r_2$ onto spherical harmonics of degree less than $K$, then we can derive that
\begin{equation}\begin{split}\label{int2}
\|\Pi_Kr_2\|_{L^{2}(\mathbb{S}^{n})}&= (\sum_{k<K}\|\pi_kr_2\|^{2}_{L^{2}(\mathbb{S}^{n})})^{1/2}\\
 &\leq \gamma^{-\frac{p}{4}}\delta_0^{\frac{p}{8}}\|r_2\|_{L^{2}(\mathbb{S}^{n})}\sqrt{\sum_{k<K}3^{k}}\\
  &\leq 3^{\frac{K}{2}}\gamma^{-\frac{p}{4}}\delta_0^{\frac{p}{8}}\|r_2\|_{L^{2}(\mathbb{S}^{n})}.
\end{split}\end{equation}
By (\ref{Funk-Heck}), we can write
\begin{align*}
\langle\mathcal{P}_{2s}(r_2),r_2\rangle=\sum_{j=0}^{+\infty}A_{n,s}(j)\|\pi_jr_2\|_{L^2(\mathbb{S}^n)}^2.
\end{align*}
This together with \eqref{int2} and the fact $A_{n,s}(K)$ is decreasing in $K$, gives that
\begin{align*}
&\int_{\mathbb{S}^{n}}r_2^{2}d\sigma_{\xi}-\langle\mathcal{P}_{2s}(r_2),r_2\rangle\\
 &\ \ \geq \sum_{j=K}^{+\infty}(1-A_{n,s}(j))\|\pi_jr_2\|_{L^2(\mathbb{S}^n)}^2\\
 &\ \ \geq (1-A_{n,s}(K))\left(\|r_2\|_{L^2(\mathbb{S}^n)}^2-\|\Pi_Kr_2\|_{L^{2}(\mathbb{S}^{n})}^2\right)\\
 &\ \ \geq (1-A_{n,s}(K))\left(1-3^{K}\gamma^{-\frac{p}{2}}\delta_0^{\frac{p}{4}}\right)\|r_2\|^2_{L^{2}(\mathbb{S}^{n})}.
\end{align*}
Then we conclude that
\begin{align*}
I_2&=\int_{\mathbb{S}^{n}}r_2^{2}d\sigma_{\xi}-\langle\mathcal{P}_{2s}(r_2),r_2\rangle\\
&\ \ +\left(p-2-\frac{2}{p}C_{\epsilon_1,\epsilon_2}(1+\vartheta)\theta-\sigma_0\theta\right)\int_{\mathbb{S}^{n}}r_2^{2}d\sigma_{\xi}\\
&\geq (1-A_{n,s}(K))\left(1-3^{K}\gamma^{-\frac{p}{2}}\delta_0^{\frac{p}{4}}\right)\|r_2\|^2_{L^{2}(\mathbb{S}^{n})}\\
&\ \ -\frac{4s}{n+2s}\left(1+\frac{12}{7}C_{\epsilon_1, \epsilon_2}+\sigma_0\right)\|r_2\|_{L^{2}(\mathbb{S}^{n})}^2.
\end{align*}
Choose $\delta_2=\delta_2(K)$ satisfying
\begin{align}\label{delta2}
\left(1-3^{K}\gamma^{-\frac{p}{2}}\delta_2^{\frac{p}{4}}\right)= \frac{2}{3}.
\end{align}
 Noticing
\begin{equation}\begin{split}\nonumber
1-A_{n,s}(K)&\geq 1-A_{n,1}(K)\\
&=1-\frac{\Gamma(\frac{n}{2}+1)\Gamma(\frac{n}{2}-1+K)}{\Gamma(\frac{n}{2}-1)\Gamma(\frac{n}{2}+1+K)}\\
&=1-\frac{\frac{n}{2}(\frac{n}{2}-1)}{(\frac{n}{2}+K)(\frac{n}{2}+K-1)}\\
&=\frac{2K(2n+2K-2)}{(n+2K)(n+2K-2)}
\geq \frac{2K}{n+2K},
\end{split}\end{equation}
we deduce that there exist $K_0$ and $n_0$ dependent on $s$ and $\epsilon_1$, $\epsilon_2$ such that
$$\frac{2}{3}\frac{2K_0}{n_0+2K_0}\geq \frac{4s}{n_0+2s}\left(1+\frac{12}{7}C_{\epsilon_1, \epsilon_2}+\sigma_0\right).$$
It is easy to check that $$\left(\frac{2K_0}{n+2K_0}\right)\left(\frac{4s}{n+2s}\right)^{-1}\geq \left(\frac{2K_0}{n_0+2K_0}\right)\left(\frac{4s}{n_0+2s}\right)^{-1}$$ when $n\geq n_0$. Choose $K=K_0$, when $n\geq n_0$, there holds
$$\frac{2}{3}\left(1-A_{n,s}(K)\right)\geq \frac{4s}{n+2s}\left(1+\frac{12}{7}C_{\epsilon_1, \epsilon_2}+\sigma_0\right).$$

To summarize, if we choose
\begin{align}\label{delta0}
\delta_0=\min\{\delta_1,\delta_2\},
\end{align}
 where $\delta_1$
 and $\delta_2$ are defined by (\ref{delta1}) and (\ref{delta2}) which are independent of $n$,  we can
prove $I_i\geq 0$ for $i=1,2,3$, which implies Lemma \ref{local sta}.

\section{Stability of HLS inequalities with the optimal asymptotic lower bounds}
In this section, we will deduce the global HLS stability with the optimal asymptotic lower bound from the local stability. The proof is based on the competing symmetries method, the rearrangement flow for HLS integral and concavity argument.

For any $0\leq g\in L^{\frac{2n}{n+2s}}$, let $\phi\in \mathcal{M}_{HLS}$ such that
$$\|(-\Delta)^{-s/2}(g-\phi)\|_2=\inf_{h\in\mathcal{M}_{HLS}}\|(-\Delta)^{-s/2}(g-h)\|_2.$$
Since the extremal of $\inf_{h\in \mathcal{M}_{HLS}}\|(-\Delta)^{-s/2}(g-h)\|$ may not be unique, then the norm $\|g-\phi\|_{\frac{2n}{n+2s}}$ is not unique.
First we prove the norms $\|g-\phi\|_{\frac{2n}{n+2s}}$ can be controlled by each other.

\begin{lemma}\label{comparable}
Let  $0\leq g\in L^{\frac{2n}{n+2s}}(\mathbb{R}^n)$ with $g=\phi_{i}+r_{i}~~(i=1,2)$, where $\phi_i\in \mathcal{M}_{HLS}$ such that
$\|(-\Delta)^{-s/2}(g-\phi_i)\|_2=\inf_{h\in\mathcal{M}_{HLS}}\|(-\Delta)^{-s/2}(g-h)\|_2.$ Then there holds
$$b_{n,s}^{-1}\|r_1\|_{\frac{2n}{n+2s}} \leq \|r_2\|_{\frac{2n}{n+2s}}\leq b_{n,s}\|r_1\|_{\frac{2n}{n+2s}},$$
where $b_{n,s}=1+2^{1+\frac{s}{n}}\sqrt{\frac{n+2s}{n-2s}}$.
\end{lemma}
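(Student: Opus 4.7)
Working with $p=\frac{2n}{n+2s}$, the statement is symmetric in the two decompositions, so it suffices to prove $\|r_2\|_p\le b_{n,s}\|r_1\|_p$. The identity $r_2-r_1=\phi_1-\phi_2$ together with the triangle inequality gives
\[
\|r_2\|_p\le\|r_1\|_p+\|\phi_1-\phi_2\|_p,
\]
so the task reduces to bounding $\|\phi_1-\phi_2\|_p$ by $(b_{n,s}-1)\|r_1\|_p=2^{1+s/n}\sqrt{(n+2s)/(n-2s)}\,\|r_1\|_p$.

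I would first extract three preliminary facts from the joint variational characterization. \textbf{Positivity:} since $g\ge 0$ and each bubble $F_{a,x_0}$ is pointwise positive, the inner one-parameter minimization $c\mapsto \|g-cF_{a,x_0}\|_{H^{-s}}^{2}$ at the optimal scale and center yields $c\ge 0$, hence $\phi_1,\phi_2\ge 0$. \textbf{Orthogonality:} because $\mathcal M_{HLS}$ is closed under the scaling $\phi\mapsto t\phi$, first-order optimality at the minimizer forces $\langle r_i,\phi_i\rangle_{H^{-s}}=0$, producing the Pythagorean identity
\[
\|g\|_{H^{-s}}^{2}=\|\phi_i\|_{H^{-s}}^{2}+\|r_i\|_{H^{-s}}^{2},\qquad i=1,2.
\]
\textbf{Equal bubble sizes:} the two minimizers realize the same distance, so $\|r_1\|_{H^{-s}}=\|r_2\|_{H^{-s}}$, and combined with the Pythagorean identity this forces $\|\phi_1\|_{H^{-s}}=\|\phi_2\|_{H^{-s}}$. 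Because bubbles attain equality in HLS, the common value $A:=\|\phi_1\|_p=\|\phi_2\|_p$ equals $\mathcal S_{s,n}^{1/2}\|\phi_i\|_{H^{-s}}$.

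Positivity of $\phi_1,\phi_2$ permits the pointwise inequality $|\phi_1-\phi_2|^{p}\le\phi_1^{p}+\phi_2^{p}$ (valid for $1\le p\le 2$), whose integration together with the equal-norms relation yields
\[
\|\phi_1-\phi_2\|_p\le 2^{1/p}A=2^{1/2+s/n}A,
\]
using $1/p=\tfrac12+\tfrac{s}{n}$. This accounts for the $2^{s/n}$-piece and one $\sqrt 2$ of the prefactor $b_{n,s}-1$.

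The remaining step---and the main obstacle---is the matching bound $A\le \sqrt 2\,\sqrt{(n+2s)/(n-2s)}\,\|r_1\|_p$. For this I would combine the sharp HLS inequality on $g$, which in view of the Pythagorean identity reads
\[
A^{2}+\mathcal S_{s,n}\|r_1\|_{H^{-s}}^{2}\le\|g\|_p^{2},
\]
with the triangle inequality $\|g\|_p\le A+\|r_1\|_p$ and the sharp HLS on the residual $\mathcal S_{s,n}\|r_1\|_{H^{-s}}^{2}\le\|r_1\|_p^{2}$. The dimensional factor $\sqrt{(n+2s)/(n-2s)}=\sqrt{p'/p}$ is expected to emerge from the interplay of these quantitative ingredients with the explicit form of the sharp constant $\mathcal S_{s,n}=\frac{\Gamma((n+2s)/2)}{\Gamma((n-2s)/2)}|\mathbb S^{n}|^{2s/n}$ and the identification of the dual exponent $p'=\frac{2n}{n-2s}$; positivity of $\phi_1$ and the fact that it saturates HLS are what make this calibration tight. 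Concatenating the two estimates delivers $\|\phi_1-\phi_2\|_p\le 2^{1+s/n}\sqrt{(n+2s)/(n-2s)}\,\|r_1\|_p$, and the opening triangle inequality then gives $\|r_2\|_p\le b_{n,s}\|r_1\|_p$; exchanging the roles of the two decompositions yields the reverse inequality.
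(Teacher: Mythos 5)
Your opening reduction is correct and matches the paper's: bound $\|\phi_1-\phi_2\|_p$ by a constant times $\|r_i\|_p$ and conclude by the triangle inequality. The preliminary facts (positivity, $H^{-s}$-orthogonality of $r_i$ to $\phi_i$, and $\|\phi_1\|_{H^{-s}}=\|\phi_2\|_{H^{-s}}$) are also correct. But the core of the argument collapses at the step $\|\phi_1-\phi_2\|_p\le 2^{1/p}A$ followed by the claim $A\le \sqrt{2}\,\sqrt{(n+2s)/(n-2s)}\,\|r_1\|_p$. The second inequality is simply false: take $g$ itself equal to a bubble, so $r_1=r_2=0$ while $A=\|\phi_1\|_p>0$. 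More generally, $A$ is a fixed size of a bubble and has no reason to shrink with $\|r_1\|_p$; your attempt to derive the bound from $\|g\|_p^2\ge A^2+\mathcal S_{s,n}\|r_1\|_{H^{-s}}^2$ and $\|g\|_p\le A+\|r_1\|_p$ only yields $2A\|r_1\|_p+\|r_1\|_p^2\ge\mathcal S_{s,n}\|r_1\|_{H^{-s}}^2$, which gives no upper bound on $A$. The pointwise inequality $|\phi_1-\phi_2|^p\le\phi_1^p+\phi_2^p$ is too crude: it discards all cancellation between two bubbles that are both close to $g$ and hence close to each other.

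What the paper does instead exploits precisely that closeness. Expanding $\|(-\Delta)^{-s/2}(g-\phi_1)\|_2^2=\|(-\Delta)^{-s/2}(g-\phi_2)\|_2^2$ gives
\[
\|(-\Delta)^{-s/2}(\phi_1-\phi_2)\|_2^2 = 2\big|\langle (-\Delta)^{-s/2}r_2,(-\Delta)^{-s/2}(\phi_1-\phi_2)\rangle\big|,
\]
whence by Cauchy--Schwarz and HLS, $\|(-\Delta)^{-s/2}(\phi_1-\phi_2)\|_2\le 2\mathcal S_{s,n}^{-1/2}\|r_i\|_p$ for $i=1,2$. This is the genuinely small quantity your argument never produced. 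The remaining ingredient is a reverse inequality on the manifold of bubbles: using the Euler--Lagrange equation $(-\Delta)^{-s}\phi_i=\mathcal S_{s,n}^{-1}\phi_i^{(n-2s)/(n+2s)}$, the elementary inequality $a^q-b^q\ge qa^{q-1}(a-b)$ for $a\ge b>0$, $0<q<1$, and H\"older with exponent less than one, one shows
\[
\|\phi_1-\phi_2\|_p\le 2^{s/n}\sqrt{\tfrac{n+2s}{n-2s}}\,\mathcal S_{s,n}^{1/2}\,\|(-\Delta)^{-s/2}(\phi_1-\phi_2)\|_2.
\]
Combining these two gives exactly the factor $2^{1+s/n}\sqrt{(n+2s)/(n-2s)}$. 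You would need to replace your Step 4 entirely with this two-step argument (small $H^{-s}$-distance between the bubbles, then a reverse HLS-type bound for differences of bubbles) rather than attempting to bound $A$ by $\|r_1\|_p$.
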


\begin{proof}
Without loss of generality, we may assume that  $\|\phi_1\|_{\frac{2n}{n+2s}}=\|\phi_2\|_{\frac{2n}{n+2s}}=1$.
Since $\|(-\Delta)^{-s/2}(g-\phi_1)\|^2_2=\|(-\Delta)^{-s/2}(g-\phi_2)\|^2_2$, then we have
$$\|(-\Delta)^{-s/2}(\phi_1-\phi_2)\|^2_2=2|\langle(-\Delta)^{-s/2}(g-\phi_2),(-\Delta)^{-s/2}(\phi_1-\phi_2)\rangle|.$$
By the H$\ddot{o}$lder inequality and the HLS inequality, there holds
$$\|(-\Delta)^{-s/2}(\phi_1-\phi_2)\|_2\leq 2\|(-\Delta)^{-s/2}(g-\phi_2)\|_2\leq 2\mathcal{S}_{s,n}^{-1/2}\|r_2\|_{\frac{2n}{n+2s}}.$$
Using the same estimate we also get
$$\|(-\Delta)^{-s/2}(\phi_1-\phi_2)\|_2\leq 2\mathcal{S}_{s,n}^{-1/2}\|r_1\|_{\frac{2n}{n+2s}}.$$
We claim that \begin{equation}\label{reverse}
\|\phi_1-\phi_2\|_{\frac{2n}{n+2s}}\leq \sqrt{\frac{n+2s}{n-2s}}2^{s/n}\mathcal{S}_{s,n}^{1/2}\|(-\Delta)^{-s/2}(\phi_1-\phi_2)\|_2.
\end{equation}
 By this claim and the estimates above, we can derive
$$\|\phi_1-\phi_2\|_{\frac{2n}{n+2s}}\leq 2^{1+\frac{s}{n}}\sqrt{\frac{n+2s}{n-2s}}\|r_i\|_{\frac{2n}{n+2s}},~~i=1,2.$$
This along with the estimate
$$ \frac{\|r_2\|_{\frac{2n}{n+2s}}}{\|r_1\|_{\frac{2n}{n+2s}}}\leq \frac{\|r_1\|_{\frac{2n}{n+2s}}+\|\phi_1-\phi_2\|_{\frac{2n}{n+2s}}}{\|r_1\|_{\frac{2n}{n+2s}}}$$
and
$$\frac{\|r_1\|_{\frac{2n}{n+2s}}}{\|r_2\|_{\frac{2n}{n+2s}}}\leq \frac{\|r_2\|_{\frac{2n}{n+2s}}+\|\phi_1-\phi_2\|_{\frac{2n}{n+2s}}}{\|r_2\|_{\frac{2n}{n+2s}}}$$
yields that
$$\left(1+2^{1+\frac{s}{n}}\sqrt{\frac{n+2s}{n-2s}}\right)^{-1}\leq\frac{\|r_2\|_{\frac{2n}{n+2s}}}{\|r_1\|_{\frac{2n}{n+2s}}}\leq 1+2^{1+\frac{s}{n}}\sqrt{\frac{n+2s}{n-2s}},$$
which means
$b_{n,s}^{-1}\leq\frac{\|r_2\|_{\frac{2n}{n+2s}}}{\|r_1\|_{\frac{2n}{n+2s}}}\leq b_{n,s}$.

Now let us prove inequality \eqref{reverse}.
Since $\phi_i\geq 0$ and satisfy the Euler-Lagrange equation
$(-\Delta)^{-s}\phi_i=\mathcal{S}_{s,n}^{-1}\phi_i^{\frac{n-2s}{n+2s}}$. So $$\mathcal{S}_{s,n}\|\phi_1\|^2_{\frac{2n}{n+2s}}=\mathcal{S}_{s,n}\|\phi_2\|^2_{\frac{2n}{n+2s}}=\|(-\Delta)^{-s/2}g\|_2^2-\inf_{h\in\mathcal{M}_{HLS}}\|(-\Delta)^{-s/2}(g-h)\|^2_2.$$

Using the inequality $a^q-b^q\geq q a^{q-1}(a-b)$ when $a\geq b>0,~~0<q<1$, and H$\ddot{o}$lder's inequality for index less than 1, we have
 \begin{equation}\begin{split}\nonumber
 &\mathcal{S}_{s,n}\|(-\Delta)^{-s/2}(\phi_1-\phi_2)\|^2_2=\langle \phi_1^{\frac{n-2s}{n+2s}}-\phi_2^{\frac{n-2s}{n+2s}}, \phi_1-\phi_2\rangle\\
 &\ \ \geq \frac{n-2s}{n+2s}\int_{\{\phi_1\geq \phi_2\}} \phi_1^{\frac{-4s}{n+2s}}(\phi_1-\phi_2)^2dx+ \frac{n-2s}{n+2s}\int_{\{\phi_1\leq  \phi_2\}} \phi_2^{\frac{2n}{n+2s}-2}(\phi_1-\phi_2)^2dx\\
 &\ \ \geq \frac{n-2s}{n+2s}\left(\int_{\{\phi_1\geq \phi_2\}} |\phi_1|^{\frac{2n}{n+2s}}\right)^{-\frac{2s}{n}}\|(\phi_1-\phi_2)\chi_{\{\phi_1\geq \phi_2\}}\|_{\frac{2n}{n+2s}}^2\\
 &+\frac{n-2s}{n+2s}\left(\int_{\{\phi_1\leq \phi_2\}} |\phi_2|^{\frac{2n}{n+2s}}\right)^{-\frac{2s}{n}}\|(\phi_1-\phi_2)\chi_{\{\phi_1\leq \phi_2\}}\|_{\frac{2n}{n+2s}}^2\\
 &\ \ \geq\frac{n-2s}{n+2s}2^{-2s/n}\|\phi_1-\phi_2\|_{\frac{2n}{n+2s}}^2,
 \end{split}\end{equation}
which is the claim.

\end{proof}

Then let us state some tools including competing symmetry
theorem, a continuous rearrangement flow, which will be used in our proof. Let $f\in L^{p}(\mathbb{R}^n)$, $1<p<\infty$ and $f_k=(\mathcal{R}U)^kf,~~k\in \mathbb{N}$, where $\mathcal{R}f=f^\ast$ is the decreasing rearrangement of $f$ and
$$(Uf)(x)=\left(\frac{2}{|x-e_n|^2}\right)^{\frac{n}{p}}f\left(\frac{x_1}{|x-e_n|^2},\cdots,\frac{x_{n-1}}{|x-e_n|^2},\frac{|x|^2-1}{|x-e_n|^2}\right),$$
$e_n=(0,\cdots,0,1)\in \mathbb{R}^n$. Carlen and Loss~\cite{CaLo} proved the following competing symmetry theorem.

\begin{theorem}\label{compete sym}(Carlen-Loss)
Let $0\leq f\in L^{p}$ ($1<p<\infty$), $f_k=(\mathcal{R}U)^kf$ and $h(x)=\|f\|_{p}|\mathbb{S}^n|^{-1/p}\left(\frac{2}{1+|x|^2}\right)^{n/p}$.  Then
$$\lim_{k\rightarrow \infty}\|f_k(x)-h(x)\|_{p}=0.$$
\end{theorem}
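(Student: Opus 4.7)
The plan is to proceed in three steps: a monotonicity step, a compactness step, and a rigidity step. It is convenient to transfer the problem to the sphere $\mathbb{S}^n$ via stereographic projection: $U$ corresponds to a genuine rotation $\rho$ acting isometrically on $L^p(\mathbb{S}^n)$, the function $h$ corresponds to the constant $c = \|f\|_{p}|\mathbb{S}^n|^{-1/p}$, and $\mathcal{R}$ is intertwined with the symmetric decreasing rearrangement on $\mathbb{R}^n$. Both $U$ and $\mathcal{R}$ preserve the $L^p$ norm, so $\|f_k\|_p = \|f\|_p$ for every $k$, and we may normalize $\|f\|_p = 1$.

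For the monotonicity step, note that $Uh = h$ since $h$ corresponds to the constant on the sphere, which is invariant under any rotation; and $\mathcal{R}h = h$ since $h$ is itself symmetric decreasing. Combining the isometry of $U$ with the classical nonexpansivity of symmetric decreasing rearrangement against a symmetric decreasing target (i.e.\ $\|g^{\ast} - h\|_p \leq \|g - h\|_p$ whenever $h = h^{\ast}$) yields
$$d_k := \|f_k - h\|_p = \|\mathcal{R}(Uf_{k-1}) - \mathcal{R}h\|_p \leq \|Uf_{k-1} - h\|_p = \|f_{k-1} - h\|_p,$$
so $d_k \downarrow L$ for some $L \geq 0$. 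The goal is to show $L = 0$.

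For the compactness step, each $f_k$ with $k \geq 1$ is nonnegative, symmetric decreasing, and $L^p$-bounded, so by Helly's selection theorem applied to the radial profile, some subsequence $f_{k_j}$ converges a.e.\ to a symmetric decreasing $f_\infty$. Upgrading this to strong $L^p$ convergence requires tightness, i.e.\ ruling out concentration at the origin and escape to infinity; this is enforced by the mixing action of $U$, which in the sphere picture dislodges any mass concentrated near the pole of stereographic projection, so that a degenerating subsequence would cause a strict drop in $d_{k_j+1}$, contradicting $d_k \downarrow L$. Passing to the limit in $f_{k_j+1} = \mathcal{R}Uf_{k_j}$ using continuity of both operators on $L^p$ gives the fixed-point relation $f_\infty = \mathcal{R}Uf_\infty$ together with $\|f_\infty - h\|_p = L$.

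The main obstacle is the rigidity step, which shows that the only nonnegative fixed point of $\mathcal{R}U$ with $L^p$ norm $\|f\|_p$ is $h$. Since $d_k \downarrow L$, the inequality $\|\mathcal{R}Uf_\infty - h\|_p \leq \|Uf_\infty - h\|_p$ must be an equality; and the equality case of the rearrangement nonexpansivity against the strictly symmetric decreasing target $h$ forces $Uf_\infty$ to already be symmetric decreasing about the origin of $\mathbb{R}^n$. Combined with $f_\infty = f_\infty^{\ast}$, translated to $\mathbb{S}^n$ this says that $F_\infty$ is simultaneously symmetric decreasing about two distinct points (the pole of stereographic projection and $\rho^{-1}$ of that pole). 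Iterating $\rho$ generates a family of distinct symmetry centers of $F_\infty$, which by transitivity of the action forces $F_\infty$ to be constant, i.e.\ $f_\infty = h$. Hence $L = 0$, and by the monotonicity of the whole sequence $d_k$, the full sequence $f_k$ converges to $h$ in $L^p$. The delicate technical points are the precise characterization of the equality case in rearrangement nonexpansivity against a strictly symmetric decreasing target, and verifying that the orbit of the south pole under iteration of $\rho$ is rich enough to pin down $F_\infty$ uniquely.
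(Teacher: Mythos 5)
The paper does not prove this theorem; it is simply quoted as a known result from Carlen--Loss \cite{CaLo}, so there is no internal argument to compare against. Assessed on its own terms, your sketch has the right skeleton and the monotonicity and rigidity steps are essentially sound: $h$ is fixed by both $U$ and $\mathcal{R}$, $U$ is a linear isometry, $\mathcal{R}$ is nonexpansive against the symmetric-decreasing target $h$, so $d_k\downarrow L$; and the equality case of that nonexpansivity, together with the competing symmetry centers generated by iterating $\rho$, does force any appropriate limit to be $h$. One small correction: from $f_{k_j}\to f_\infty$ in $L^p$ you do not automatically get the fixed-point relation $f_\infty = \mathcal{R}Uf_\infty$ (that would require asymptotic regularity of the orbit); what you do get is $f_{k_j+1}\to\mathcal{R}Uf_\infty$ and hence $\|\mathcal{R}Uf_\infty - h\|_p = L = \|f_\infty - h\|_p$, which is what the rigidity argument actually needs.

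The genuine gap is the compactness/tightness step. You assert tightness is ``enforced by the mixing action of $U$'' because a degenerating subsequence ``would cause a strict drop in $d_{k_j+1}$,'' but this is not established and, as stated, is false in the concentration limit. If $B_\lambda$ is a bubble of height $\sim\lambda^{n/p}$ and width $\sim\lambda^{-1}$ at the origin, then $UB_\lambda$ sits near $-e_n$, where $h(-e_n)=2^{-n/p}h(0)$; the gain from rearranging it back to the origin is of order $p\,(h(0)-h(-e_n))\int B_\lambda^{\,p-1}\sim \lambda^{-n/p}$, which \emph{tends to zero} as $\lambda\to\infty$. So there is no uniformly strict drop, and the argument as written does not rule out concentration. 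The actual obstruction to concentration is different and more delicate: the conformal factor of $U$ near $-e_n$ dilates length scales, so each application of $\mathcal{R}U$ spreads a concentrating bubble out rather than preserving its scale, and ruling out loss of mass (to $0$ or to $\infty$) requires tracking scales carefully, e.g.\ through a Br\'ezis--Lieb-type decomposition of $\|f_{k_j}-h\|_p^p$. Your sketch also invokes the precise characterization of equality in $\|g^\ast-h^\ast\|_p=\|g-h\|_p$ with $h$ strictly symmetric decreasing without stating or proving it; for $p\neq 2$ this is a nontrivial ingredient that needs to be pinned down before the rigidity step is complete.
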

A continuous rearrangement flow which interpolates between a function and its symmetric decreasing rearrangement introduced in \cite{DEFFL} based on Brock's flow (\cite{Br1}, \cite{Br2}) plays an important part in our proof. More specifically, there
exists a flow $f_\tau$, $\tau\in [0,\infty]$, such that
$$f_0=f,~~f_\infty=f^\ast.$$
And if $0\leq f\in L^p(\mathbb{R}^n)$ for some $1\leq p<\infty$, then $\tau\rightarrow f_{\tau}$ is continuous in $L^{p}(\mathbb{R}^n)$ and $\|f_\tau\|_{p}=\|f\|_{p}$.
For nonnegative functions $f,g$, the following Riesz's inequality for continuous convex rearrangement holds(see Appendix A in \cite{DEFFL}):
  $$\iint_{\mathbb{R}^n\times \mathbb{R}^n}\frac{f_\tau(x)g_\tau(y)}{|x-y|^{n-2s}}dxdy\geq \iint_{\mathbb{R}^n\times \mathbb{R}^n}\frac{f(x)g(y)}{|x-y|^{n-2s}}dxdy.$$

The continuity(or semi-continuity)of the rearrangement flow under different norm is very important in the proof of global stability from local stability. For instance,   Dolbeault, Esteban,
Figalli, Frank and Loss \cite{DEFFL} proved that $\tau\rightarrow\|\nabla f_{\tau}\|_2$ is a right-continuous function and the current authors \cite{CLT2} proved $\tau\rightarrow \inf_{h\in M_{HLS}} \|f_{\tau}-h\|_{\frac{2n}{n+2s}}$ is
a continuous function.

In our setting, the situation is more complicated. For any $g_\tau$, let us recall $g_\tau$ has a decomposition means $g_\tau=\phi_\tau+r_\tau$ with $\phi_\tau\in \mathcal{M}_{HLS}$ satisfying $\|(-\Delta)^{-s/2}(g_\tau-\phi_\tau)\|_2=\inf_{h\in\mathcal{M}_{HLS}}\|(-\Delta)^{-s/2}(g_\tau-h)\|$. Since the decomposition may not be unique, then $\|r_\tau\|_{\frac{2n}{n+2s}}$ is not well defined, although $\|(-\Delta)^{-s/2}r_\tau\|_2$ is well defined. So we need to understand the continuity $\tau\rightarrow\|r_{\tau}\|_{\frac{2n}{n+2s}}$ in a reasonable way.

\subsection{Continuity}

In this section we first prove two important lemmas. First we prove that although $g_k$ may not have the unique decomposition $g_k=\phi_k+r_k$, $\|r_k\|_{\frac{2n}{n+2s}}$ will be small enough for any decomposition of $g_k$ when $k$  is sufficiently big.

\begin{lemma}\label{strong}
For any $0\leq g \in L^{\frac{2n}{n+2s}}(\mathbb{R}^n)$, let $g_k=(\mathcal{R}U)^kg$. Assume that $g_k$ satisfies
$g_k=\phi_k+r_k$ with $\phi_k$ satisfying $\|(-\Delta)^{-\frac{s}{2}}(g_k-\phi_k)\|_{2}^2=\inf\limits_{h\in \mathcal{M}_{HLS}}\|(-\Delta)^{-\frac{s}{2}}(g_k-h)\|_{2}^2$, then $$\lim\limits_{k\rightarrow+\infty}\|r_k\|_{\frac{2n}{n+2s}}=0.$$
\end{lemma}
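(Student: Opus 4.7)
The plan is to combine the competing symmetry theorem with the equality case of the HLS inequality on $\mathcal{M}_{HLS}$ and the Radon--Riesz property of the uniformly convex space $L^{\frac{2n}{n+2s}}(\mathbb{R}^n)$.

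First I would apply Theorem~\ref{compete sym} with $p=\frac{2n}{n+2s}$ to conclude that $g_k\to h$ strongly in $L^{\frac{2n}{n+2s}}(\mathbb{R}^n)$, where
$$h(x)=\|g\|_{\frac{2n}{n+2s}}|\mathbb{S}^n|^{-\frac{n+2s}{2n}}\Bigl(\frac{2}{1+|x|^2}\Bigr)^{\frac{n+2s}{2}}\in\mathcal{M}_{HLS}.$$
The sharp HLS inequality \eqref{eq-hls} then promotes this to $(-\Delta)^{-s/2}g_k\to(-\Delta)^{-s/2}h$ strongly in $L^2(\mathbb{R}^n)$. Since $\phi_k$ realizes the best $H^{-s}$-approximation of $g_k$ from $\mathcal{M}_{HLS}$ and $h\in\mathcal{M}_{HLS}$, comparing with $h$ gives
$$\|(-\Delta)^{-s/2}r_k\|_2=\|(-\Delta)^{-s/2}(g_k-\phi_k)\|_2\le\|(-\Delta)^{-s/2}(g_k-h)\|_2\le\mathcal{S}_{s,n}^{-1/2}\|g_k-h\|_{\frac{2n}{n+2s}}\to 0,$$
and hence $(-\Delta)^{-s/2}\phi_k\to(-\Delta)^{-s/2}h$ strongly in $L^2(\mathbb{R}^n)$ by the triangle inequality.

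Next I would upgrade this $H^{-s}$-convergence of $\phi_k$ to $L^{\frac{2n}{n+2s}}$-convergence, which is the crux of the argument. Because $\phi_k\in\mathcal{M}_{HLS}$, equality holds in \eqref{eq-hls}, so
$$\|\phi_k\|^{2}_{\frac{2n}{n+2s}}=\mathcal{S}_{s,n}\|(-\Delta)^{-s/2}\phi_k\|^{2}_2\to\mathcal{S}_{s,n}\|(-\Delta)^{-s/2}h\|^{2}_2=\|h\|^{2}_{\frac{2n}{n+2s}};$$
in particular $\{\phi_k\}$ is bounded in $L^{\frac{2n}{n+2s}}(\mathbb{R}^n)$. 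Along any weakly convergent subsequence $\phi_{k_j}\rightharpoonup\widetilde\phi$ in $L^{\frac{2n}{n+2s}}$, the continuous embedding $L^{\frac{2n}{n+2s}}\hookrightarrow H^{-s}$ (which is weak-to-weak continuous) identifies the weak $H^{-s}$-limit of $\phi_{k_j}$ as $\widetilde\phi$; but it is also $h$ by the strong $H^{-s}$-convergence, so $\widetilde\phi=h$. Since $0<s<n/2$ ensures $\frac{2n}{n+2s}\in(1,2)$, the space $L^{\frac{2n}{n+2s}}$ is uniformly convex, and the Radon--Riesz property upgrades weak convergence $\phi_{k_j}\rightharpoonup h$ combined with norm convergence $\|\phi_{k_j}\|_{\frac{2n}{n+2s}}\to\|h\|_{\frac{2n}{n+2s}}$ to strong convergence $\phi_{k_j}\to h$. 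A standard subsequence argument then yields $\phi_k\to h$ in $L^{\frac{2n}{n+2s}}$, and
$$\|r_k\|_{\frac{2n}{n+2s}}\le\|g_k-h\|_{\frac{2n}{n+2s}}+\|h-\phi_k\|_{\frac{2n}{n+2s}}\to 0$$
closes the argument.

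The principal obstacle I foresee is precisely this last upgrade: because the best $H^{-s}$-approximation of $g_k$ in $\mathcal{M}_{HLS}$ need not be unique (as emphasized in Lemma~\ref{comparable}), one cannot appeal to continuity of a best-approximation map or attempt to control the parameters $(c_k,x_k,a_k)$ in the representation $\phi_k=c_kF((\,\cdot\,-x_k)/a_k)$ directly (the parameters could, a priori, concentrate, disperse, or escape to infinity). The two ingredients that rescue the argument are the HLS equality on $\mathcal{M}_{HLS}$---which converts $L^2$-norm convergence of the Riesz potentials of $\phi_k$ into $L^{\frac{2n}{n+2s}}$-norm convergence of $\phi_k$ itself---and the uniform convexity of $L^{\frac{2n}{n+2s}}$, which via Radon--Riesz turns weak convergence plus norm convergence into strong convergence.
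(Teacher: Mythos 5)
Your proof is correct, and it takes a genuinely different route from the paper's. Both arguments open the same way: competing symmetry (Theorem~\ref{compete sym}) gives $g_k\to h_g$ in $L^{\frac{2n}{n+2s}}$, the HLS inequality transfers this to $H^{-s}$, and comparing $\phi_k$ with $h_g\in\mathcal{M}_{HLS}$ forces $\|(-\Delta)^{-s/2}r_k\|_2\to 0$ and hence $(-\Delta)^{-s/2}\phi_k\to(-\Delta)^{-s/2}h_g$ in $L^2$. The divergence is in the final upgrade of $H^{-s}$-convergence of $\phi_k$ to $L^{\frac{2n}{n+2s}}$-convergence, which, as you correctly identify, is the crux. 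The paper goes \emph{quantitative}: it exploits that each $\phi_k$ and $h_g$ satisfy the Euler--Lagrange equation $(-\Delta)^{-s}\phi=\lambda|\phi|^{\frac{2n}{n+2s}-2}\phi$, passes this through $\langle(-\Delta)^{-s}\phi_k-(-\Delta)^{-s}h_g,\phi_k-h_g\rangle\to 0$, and then invokes a pointwise convexity inequality for the map $t\mapsto t^{q}$ with $q=\frac{2n}{n+2s}-1\in(0,1)$ together with H\"older's inequality for indices below one, obtaining a coercivity bound of the form $\langle|\phi_k|^{q-1}\phi_k-|h_g|^{q-1}h_g,\phi_k-h_g\rangle\gtrsim\|\phi_k-h_g\|_{\frac{2n}{n+2s}}^2$. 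Your argument is \emph{soft}: you use only that extremality gives exact equality in HLS so $\|\phi_k\|_{\frac{2n}{n+2s}}\to\|h_g\|_{\frac{2n}{n+2s}}$, then reflexivity to extract weak limits, the weak-to-weak continuity of the bounded embedding $L^{\frac{2n}{n+2s}}\hookrightarrow H^{-s}$ (plus its injectivity, which you should at least acknowledge) to identify the weak limit as $h_g$, and finally the Radon--Riesz property of the uniformly convex space $L^{\frac{2n}{n+2s}}$ to promote weak plus norm convergence to strong convergence. Your route is shorter and avoids the Euler--Lagrange manipulation entirely; the paper's route yields, as a byproduct, an explicit quantitative lower bound on $\langle\mathcal{P}_{2s}(\phi_k-h_g),\phi_k-h_g\rangle$ in terms of $\|\phi_k-h_g\|_{\frac{2n}{n+2s}}^2$ (the estimate \eqref{est 3}), which is essentially the same coercivity estimate that reappears in Lemma~\ref{comparable}, so the paper is reusing machinery it needs elsewhere. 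Both proofs are complete and correct.
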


\begin{proof}
By Theorem \ref{compete sym}, we know that $$g\rightarrow  h_g(x)=\|g\|_{\frac{2n}{n+2s}}|\mathbb{S}^n|^{-\frac{n+2s}{2n}}\left(\frac{2}{1+|x|^2}\right)^{\frac{n+2s}{2}}\in M_{HLS}\ {\rm in}\ L^{\frac{2n}{n+2s}}(\mathbb{R}^n).$$
Then $$\|(-\Delta)^{-\frac{s}{2}}(g_k-\phi_k)\|_{2}^2\leq \|(-\Delta)^{-\frac{s}{2}}(g_k-h_g)\|_{2}^2\leq S_{n,s}\|g_k-h_g\|_{\frac{2n}{n+2s}}\rightarrow 0$$
as $k\rightarrow +\infty$ by the definition of $\phi_k$.

Obviously, $$\|r_k\|_{\frac{2n}{n+2s}}=\|g_k-\phi_k\|_{\frac{2n}{n+2s}}\leq \|g_k-h_g\|_{\frac{2n}{n+2s}}+\|\phi_k-h_g\|_{\frac{2n}{n+2s}}.$$
In order to prove that $\lim\limits_{k\rightarrow +\infty}\|r_k\|_{\frac{2n}{n+2s}}=0$, we only need to prove that
$$\lim\limits_{k\rightarrow +\infty}\|\phi_k-h_g\|_{\frac{2n}{n+2s}}=0.$$

We first claim that $$\lim\limits_{k\rightarrow +\infty}\|(-\Delta)^{-\frac{s}{2}}(\phi_k-h_g)\|_2=0.$$
Careful calculation yields that
\begin{equation}\begin{split}
\lim\limits_{k\rightarrow +\infty}\|(-\Delta)^{-\frac{s}{2}}(\phi_k-h_g)\|_2&\leq \lim\limits_{k\rightarrow +\infty}\|(-\Delta)^{-\frac{s}{2}}(\phi_k-g_k)\|_2+\lim\limits_{k\rightarrow +\infty}\|(-\Delta)^{-\frac{s}{2}}(g_k-h_g)\|_2\\
&\leq \lim\limits_{k\rightarrow +\infty}\|(-\Delta)^{-\frac{s}{2}}(h_g-g_k)\|_2+\lim\limits_{k\rightarrow +\infty}\|(-\Delta)^{-\frac{s}{2}}(g_k-h_g)\|_2\\
&\leq 2S_{n,s}^{\frac{1}{2}}\|g_k-h_g\|_{\frac{2n}{n+2s}}=0.
\end{split}\end{equation}
This implies that $$S_{n,s}\|\phi_k\|_{\frac{2n}{n+2s}}=\|(-\Delta)^{-\frac{s}{2}}\phi_k\|_2\rightarrow \|(-\Delta)^{-\frac{s}{2}}h_g\|_2=S_{n,s}\|h_g\|_{\frac{2n}{n+2s}}$$
as $k\rightarrow +\infty$. Now, we start to prove that $$\lim\limits_{k\rightarrow +\infty}\|\phi_k-h_g\|_{\frac{2n}{n+2s}}=0.$$
Since $\lim\limits_{k\rightarrow +\infty}\|(-\Delta)^{-\frac{s}{2}}(\phi_k-h_g)\|_2=0$, then it follows that
$$\lim\limits_{k\rightarrow +\infty} \langle(-\Delta)^{-s}\phi_k-(-\Delta)^{-s}h_g, \phi_k-h_g\rangle=0,$$
where $\langle\cdot,\cdot\rangle$ is the inner product in $L^{2}(\mathbb{R}^n)$. Noticing $\phi_k$ and $h_g$ are the extremals of Hardy-Littlewood-Sobolev inequality in $\mathbb{R}^n$, then we can deduce that there exist Lagrange multiplier $\lambda_k$ and $\lambda$ such that $$(-\Delta)^{-s}\phi_k=\lambda_k|\phi_k|^{\frac{2n}{n+2s}-2}\phi_k,\ \ (-\Delta)^{-s}h_g=\lambda|h_g|^{\frac{2n}{n+2s}-2}h_g.$$

Since $\lim\limits_{k\rightarrow +\infty}\|(-\Delta)^{-\frac{s}{2}}g_k\|_2= \|(-\Delta)^{-\frac{s}{2}}h_g\|_2$, then $\lim\limits_{k\rightarrow+\infty}\lambda_k=\lambda$. Careful calculation gives that
\begin{equation}\begin{split}\nonumber
0&=\lim\limits_{k\rightarrow +\infty} \langle(-\Delta)^{-s}\phi_k-(-\Delta)^{-s}h_g, \phi_k-h_g\rangle\\
&=\lim\limits_{k\rightarrow +\infty}\langle\lambda_k|\phi_k|^{\frac{2n}{n+2s}-2}\phi_k-\lambda|h_g|^{\frac{2n}{n+2s}-2}h_g, \phi_k-h_g\rangle\\
&=\lim\limits_{k\rightarrow +\infty}\langle\lambda_k-\lambda)|\phi_k|^{\frac{2n}{n+2s}-2}\phi_k, \phi_k-h_g\rangle\\
& +\lim\limits_{k\rightarrow +\infty}\langle\lambda|\phi_k|^{\frac{2n}{n+2s}-2}\phi_k-\lambda|h_g|^{\frac{2n}{n+2s}-2}h_g, \phi_k-h_g\rangle.
\end{split}\end{equation}
 This together with
 \begin{equation}\begin{split}\nonumber
 &\langle(\lambda_k-\lambda)|\phi_k|^{\frac{2n}{n+2s}-2}\phi_k, \phi_k-h_g\rangle\\
 &\lesssim (\lambda_k-\lambda) \|(-\Delta)^{-\frac{s}{2}}\phi_k\|_{2}\|(-\Delta)^{-\frac{s}{2}}(\phi_k-h_g)\|_{2}\rightarrow0
 \end{split}\end{equation}
 implies that
\begin{equation}\begin{split}\label{est 2}
\lim\limits_{k\rightarrow +\infty}\langle\lambda|\phi_k|^{\frac{2n}{n+2s}-2}\phi_k-\lambda|h_g|^{\frac{2n}{n+2s}-2}h_g, \phi_k-h_g\rangle=0.
 \end{split}\end{equation}

On the other hand, using the inequality $a^q-b^q\geq q a^{q-1}(a-b)$ when $a\geq b>0,~~0<q<1$, H$\ddot{o}$lder's inequality for index less than 1 and the fact $\phi_k$, $h_g> 0$, we have
 \footnotesize{\begin{equation}\begin{split}\label{est 3}
 &\langle|\phi_k|^{\frac{2n}{n+2s}-2}\phi_k-|h_g|^{\frac{2n}{n+2s}-2}h_g, \phi_k-h_g\rangle\\
 &\ \ \geq \frac{n-2s}{n+2s}\left(\int_{\{\phi_k\geq h_g\}} \phi_k^{\frac{2n}{n+2s}-2}(\phi_k-h_g)^2dx+ \int_{\{\phi_k\leq  h_g\}} h_g^{\frac{2n}{n+2s}-2}(h_g-\phi_k)^2dx\right)\\
 &\ \ \geq \frac{n-2s}{n+2s} \left(\int_{\{\phi_k\geq h_g\}} |\phi_k|^{\frac{2n}{n+2s}}\right)^{-\frac{2s}{n}}\|(\phi_k-h_g)\chi_{\{\phi_k\geq h_g\}}\|_{\frac{2n}{n+2s}}^2\\
 &\ \ \ \ \ +\frac{n-2s}{n+2s}\left(\int_{\{\phi_k\leq h_g\}} |h_g|^{\frac{2n}{n+2s}}\right)^{-\frac{2s}{n}}\|(\phi_k-h_g)\chi_{\{\phi_k\leq h_g\}}\|_{\frac{2n}{n+2s}}^2\\
 &\ \ \geq \min\left\{\left(\int_{\mathbb{R}^n} |\phi_k|^{\frac{2n}{n+2s}}\right)^{-\frac{2s}{n}},\  \left(\int_{\mathbb{R}^n} |h_g|^{\frac{2n}{n+2s}}\right)^{-\frac{2s}{n}}\right\} \frac{n-2s}{n+2s}2^{-2s/n}\|\phi_k-h_g\|_{\frac{2n}{n+2s}}^2.
 \end{split}\end{equation}}
 Combining the above estimates (\ref{est 2}) and (\ref{est 3}), we derive that $\lim\limits_{k\rightarrow +\infty}\|\phi_k-h_g\|_{\frac{2n}{n+2s}}=0$ and Lemma \ref{strong} is proved.

\end{proof}

Next we illustrate that we can understand the continuity of the rearrangement flow in the following way.

\begin{lemma}\label{continuity}
For $g_{\tau}\in L^{\frac{2n}{n+2s}}(\mathbb{R}^n)$ satisfying $g_\tau=\phi_{\tau}+r_{\tau}$ with
$\phi_{\tau}$ being any extremal of $\inf\limits_{h\in \mathcal{M}_{HLS}}\|(-\Delta)^{-\frac{s}{2}}(g_\tau-h)\|_{2}^2$. If $g_{\tau}$ converges to  $g_{\tau_0}\not\equiv 0$ in $L^{\frac{2n}{n+2s}}(\mathbb{R}^n)$, then there exists subsequence $\tau_k\rightarrow \tau_0$ (as $k\rightarrow +\infty$) and $r_{\tau_0}\in L^{\frac{2n}{n+2s}}(\mathbb{R}^n)$ such that
$r_{\tau_k}$ converges to $r_{\tau_0}$ in $L^{\frac{2n}{n+2s}}(\mathbb{R}^n)$, and $\phi_{\tau_0}:=g_{\tau_0}-r_{\tau_0}$ is an extremal of $\inf\limits_{h\in M_{HLS}}\|(-\Delta)^{-\frac{s}{2}}(g_{\tau_0}-h)\|_{2}^2$.
\end{lemma}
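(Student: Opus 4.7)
The plan is to parametrize $\phi_\tau = c_\tau F((\cdot - x_\tau)/a_\tau)$ and extract a convergent subsequence of the parameter triples in the compactification $[-\infty,\infty]\times\overline{\mathbb{R}^n}\times[0,\infty]$, then exploit the minimality of $\phi_\tau$ to rule out the problematic degenerate limits.

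Since $0\in\mathcal{M}_{HLS}$ (take $c=0$), the minimality defining $\phi_\tau$ gives $\|(-\Delta)^{-s/2}(g_\tau-\phi_\tau)\|_2\le \|(-\Delta)^{-s/2}g_\tau\|_2$, and via the triangle inequality and HLS the family $\{\phi_\tau\}$ is bounded in $H^{-s}$. Equality in HLS at extremals transfers this to a uniform bound on $\|\phi_\tau\|_{\frac{2n}{n+2s}}=|c_\tau|a_\tau^{(n+2s)/2}\|F\|_{\frac{2n}{n+2s}}$. Along a subsequence $\tau_k\to\tau_0$ we may therefore assume $(c_{\tau_k},x_{\tau_k},a_{\tau_k})\to (c_*,x_*,a_*)$ in the compactification and $|c_{\tau_k}|a_{\tau_k}^{(n+2s)/2}\to \beta\in[0,\infty)$.

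The crux is to exclude the scenario where the limit triple is degenerate (i.e.\ $a_*\in\{0,\infty\}$, $|x_*|=\infty$, or $|c_*|=\infty$) while $\beta>0$. A direct change of variables when testing $\phi_{\tau_k}$ against $C_c^\infty(\mathbb{R}^n)$ shows that under any of these degenerations (concentration, spreading, or escape), $\phi_{\tau_k}\rightharpoonup 0$ weakly in $L^{\frac{2n}{n+2s}}(\mathbb{R}^n)$, and hence $(-\Delta)^{-s/2}\phi_{\tau_k}\rightharpoonup 0$ in $L^2(\mathbb{R}^n)$. Expanding the square, using weak-strong convergence in the cross term and the HLS equality $\|(-\Delta)^{-s/2}\phi_{\tau_k}\|_2^2=\mathcal{S}_{s,n}^{-1}\|\phi_{\tau_k}\|_{\frac{2n}{n+2s}}^2$ for extremals, we obtain
\begin{equation*}
\lim_k \|(-\Delta)^{-s/2}(g_{\tau_k}-\phi_{\tau_k})\|_2^2 = \|(-\Delta)^{-s/2}g_{\tau_0}\|_2^2 + \mathcal{S}_{s,n}^{-1}\beta^2\|F\|_{\frac{2n}{n+2s}}^2.
\end{equation*}
Comparing with the minimality bound $\|(-\Delta)^{-s/2}(g_{\tau_k}-\phi_{\tau_k})\|_2\le \|(-\Delta)^{-s/2}g_{\tau_k}\|_2$ (again using $h=0$) and letting $k\to\infty$ forces $\beta=0$, a contradiction.

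Consequently, either $(c_*,x_*,a_*)$ is non-degenerate and $\phi_{\tau_k}\to \phi_*:=c_*F((\cdot-x_*)/a_*)$ pointwise with $\|\phi_{\tau_k}\|_{\frac{2n}{n+2s}}\to \|\phi_*\|_{\frac{2n}{n+2s}}$, so Brezis--Lieb gives strong $L^{\frac{2n}{n+2s}}$ convergence; or $\beta=0$ and $\phi_{\tau_k}\to 0=:\phi_*$ strongly. In either case $\phi_*\in\mathcal{M}_{HLS}$, so $r_{\tau_k}\to r_{\tau_0}:=g_{\tau_0}-\phi_*$ strongly in $L^{\frac{2n}{n+2s}}(\mathbb{R}^n)$. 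That $\phi_{\tau_0}:=\phi_*$ is itself an extremal follows from the $1$-Lipschitz continuity of the distance functional $d(g):=\inf_{h\in\mathcal{M}_{HLS}}\|(-\Delta)^{-s/2}(g-h)\|_2$ on $H^{-s}$, which yields $\|(-\Delta)^{-s/2}(g_{\tau_0}-\phi_*)\|_2=\lim_k d(g_{\tau_k})=d(g_{\tau_0})$. The principal obstacle will be the degeneracy analysis above: establishing the weak convergence $\phi_{\tau_k}\rightharpoonup 0$ in each of the concentration, spreading, and escape regimes, and coupling this with the trivial competitor $h=0$ in the minimality inequality to rule out a non-trivial weak mass loss.
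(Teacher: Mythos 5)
Your proof is correct, and it takes a genuinely cleaner route than the paper's.

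The paper argues in four heavy stages: it first shows the $1$-Lipschitz continuity of $d(g)=\inf_h\|(-\Delta)^{-s/2}(g-h)\|_2$ (your ``weak continuity''), then uses the orthogonality decomposition from Lemma~\ref{decompostition} to get the Pythagorean identity $\|(-\Delta)^{-s/2}g_\tau\|_2^2=\|(-\Delta)^{-s/2}\phi_\tau\|_2^2+d(g_\tau)^2$ and deduce $\|\phi_\tau\|_{\frac{2n}{n+2s}}\to\|\widetilde\phi_{\tau_0}\|_{\frac{2n}{n+2s}}$, then rules out $\lambda_\tau\to\infty$, $\lambda_\tau\to 0$, and $|x_\tau|\to\infty$ separately via ad-hoc splittings of the double integral $\int\!\int \frac{\phi_{\tau_k}(y)g_{\tau_k}(x)}{|x-y|^{n-2s}}$, and finally invokes Brezis--Lieb. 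You replace all of the case-by-case integral estimates with one unified observation: every degeneration of the parameter triple $(c_\tau,x_\tau,a_\tau)$ in the compactification forces $\phi_{\tau_k}\rightharpoonup 0$ weakly in $L^{\frac{2n}{n+2s}}$, and since $(-\Delta)^{-s/2}:L^{\frac{2n}{n+2s}}\to L^2$ is bounded hence weak-to-weak continuous, the cross term $\langle(-\Delta)^{-s/2}g_{\tau_k},(-\Delta)^{-s/2}\phi_{\tau_k}\rangle$ vanishes by weak--strong pairing. (The paper actually uses exactly this weak-convergence idea in its $\lambda_{\tau_k}\to 0$ subcase, but detours unnecessarily through the Euler--Lagrange equation and weak convergence of $|\phi_{\tau_k}|^{-\frac{4s}{n+2s}}\phi_{\tau_k}$; the bounded-operator argument you give is shorter and covers all three degenerations uniformly.) You also sidestep the Pythagorean identity entirely: the trivial minimality bound with $h=0$ and the expansion of the square suffice to pin $\beta=0$. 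One thing worth noting: your proof allows $\phi_*=0$ as a legitimate limit (and handles it correctly via the Lipschitz continuity of $d$), whereas the paper implicitly assumes $\widetilde\phi_{\tau_0}\neq 0$ in ruling out degeneracies -- an assumption justified only because, in the application, the $g_\tau$ are nonnegative. In this sense your argument is actually more faithful to the literal hypotheses of the lemma. When writing this up you should supply the short change-of-variables computations showing $\phi_{\tau_k}\rightharpoonup 0$ in each degenerate regime (concentration uses $|c_{\tau_k}|a_{\tau_k}^n\to 0$ and $F\in L^1$; spreading uses $|c_{\tau_k}|\to 0$; escape uses uniform decay of $F$ off compacts), and note the density of $C_c^\infty$ in $L^{\frac{2n}{n-2s}}$ to upgrade testing against smooth compactly supported functions to genuine weak convergence, but these are routine.
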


\subsection{The proof of Lemma \ref{continuity}}
We first prove that
\begin{equation}\label{weak-continuity}
\lim_{\tau\to\tau_0}\inf\limits_{h\in \mathcal{M}_{HLS}}\|(-\Delta)^{-\frac{s}{2}}(g_{\tau}-h)\|_{2}^2=\inf\limits_{h\in \mathcal{M}_{HLS}} \|(-\Delta)^{-\frac{s}{2}}(g_{\tau_0}-h)\|_{2}^2.
\end{equation}
Let $\widetilde{\phi}_{\tau_0}$ satisfy
$$\|(-\Delta)^{-\frac{s}{2}}(g_{\tau_0}-\widetilde{\phi}_{\tau_0})\|_{2}^2=
\inf\limits_{h\in \mathcal{M}_{HLS}}\|(-\Delta)^{-\frac{s}{2}}(g_{\tau_0}-h)\|_{2}^2.$$
Obviously, there holds
\begin{equation}\begin{split}\nonumber
&\|(-\Delta)^{-\frac{s}{2}}(g_\tau-\phi_\tau)\|_2-\|(-\Delta)^{-\frac{s}{2}}(g_{\tau_0}-\widetilde{\phi}_{\tau_0})\|_2\\
&\ \ \geq \|(-\Delta)^{-\frac{s}{2}}(g_\tau-\phi_\tau)\|_2-\|(-\Delta)^{-\frac{s}{2}}(g_{\tau_0}-\phi_{\tau})\|_2\\
&\ \ \geq -\|(-\Delta)^{-\frac{s}{2}}(g_\tau-g_{\tau_0})\|_2
\end{split}\end{equation}
 and
\begin{equation}\begin{split}\nonumber
&\|(-\Delta)^{-\frac{s}{2}}(g_{\tau_0}-\widetilde{\phi}_{\tau_0})\|_2-\|(-\Delta)^{-\frac{s}{2}}(g_{\tau}-\phi_{\tau})\|_2\\
&\ \ \geq \|(-\Delta)^{-\frac{s}{2}}(g_{\tau_0}-\widetilde{\phi}_{\tau_0})\|_2-\|(-\Delta)^{-\frac{s}{2}}(g_{\tau}-\widetilde{\phi}_{\tau_0})\|_2\\
&\ \ \geq -\|(-\Delta)^{-\frac{s}{2}}(g_\tau-g_{\tau_0})\|_2.
\end{split}\end{equation}
Combining the above estimates, we derive that
$$-\|(-\Delta)^{-\frac{s}{2}}(g_\tau-g_{\tau_0})\|_2\leq \|(-\Delta)^{-\frac{s}{2}}r_\tau\|_2-\|(-\Delta)^{-\frac{s}{2}}(g_{\tau_0}-\widetilde{\phi}_{\tau_0})\|_2\leq \|(-\Delta)^{-\frac{s}{2}}(g_\tau-g_{\tau_0})\|_2.$$
This together with the fact $\lim\limits_{\tau\rightarrow \tau_0}\|g_{\tau}-g_{\tau_0}\|_{\frac{2n}{n+2s}}=0$ and the HLS inequality implies that
\begin{equation}\label{weak}\begin{split}
\lim\limits_{\tau\rightarrow \tau_0}\int_{\mathbb{R}^n}|(-\Delta)^{-\frac{s}{2}}(g_\tau-\phi_{\tau})|^2dx=\int_{\mathbb{R}^n}|(-\Delta)^{-\frac{s}{2}}(g_{\tau_0}-\widetilde{\phi}_{\tau_0})|^2dx,
\end{split}\end{equation}
which is \eqref{weak-continuity}.
Observing $$\|(-\Delta)^{-\frac{s}{2}}g_{\tau}\|_2^2=\|(-\Delta)^{-\frac{s}{2}}\phi_{\tau}\|_2^2+\|(-\Delta)^{-\frac{s}{2}}(g_\tau-\phi_\tau)\|_2^2$$
and $$\|(-\Delta)^{-\frac{s}{2}}g_{\tau_0}\|_2^2=\|(-\Delta)^{-\frac{s}{2}}\widetilde{\phi}_{\tau_0}\|_2^2+\|(-\Delta)^{-\frac{s}{2}}(g_{\tau_0}-\widetilde{\phi}_{\tau_0})\|_2^2,$$
then using \eqref{weak}, $\lim\limits_{\tau\rightarrow \tau_0}\|g_{\tau}-g_{\tau_0}\|_{\frac{2n}{n+2s}}=0$ and HLS inequality, we deduce that
$$\lim\limits_{\tau\rightarrow \tau_0}\|(-\Delta)^{-\frac{s}{2}}\phi_{\tau}\|_2^2=\|(-\Delta)^{-\frac{s}{2}}\widetilde{\phi}_{\tau_0}\|_2^2.$$
Since $\phi_\tau$ and $\widetilde{\phi}_{\tau_0}$ are extremals of HLS inequality, hence
$$S_{n,s}\|(-\Delta)^{-\frac{s}{2}}\phi_{\tau}\|_2^2=\|\phi_{\tau}\|_{\frac{2n}{n+2s}},\ \ S_{n,s}\|(-\Delta)^{-\frac{s}{2}}\widetilde{\phi}_{\tau_0}\|_2^2=\|\widetilde{\phi}_{\tau_0}\|_{\frac{2n}{n+2s}}.$$
Then it follows that
\begin{equation}\begin{split}\label{convergence}
\lim\limits_{\tau\rightarrow \tau_0}\|\phi_{\tau}\|_{\frac{2n}{n+2s}}&=\lim\limits_{\tau\rightarrow \tau_0}S_{n,s}\|(-\Delta)^{-\frac{s}{2}}\phi_{\tau}\|_2^2\\
&=S_{n,s}\|(-\Delta)^{-\frac{s}{2}}\widetilde{\phi}_{\tau_0}\|_2^2=\|\widetilde{\phi}_{\tau_0}\|_{\frac{2n}{n+2s}}.
\end{split}\end{equation}
\vskip0.1cm

Now we are prepared to prove that there exists a subsequence $\tau_k\rightarrow \tau_0$ (as $k\rightarrow +\infty$) and $r_{\tau_0}\in L^{\frac{2n}{n+2s}}(\mathbb{R}^n)$ such that
$r_{\tau_k}$ converges to $r_{\tau_0}$ in $L^{\frac{2n}{n+2s}}(\mathbb{R}^n)$, where $\phi_{\tau_0}:=g_{\tau_0}-r_{\tau_0}$ is some extremal of $\inf\limits_{h\in \mathcal{M}_{HLS}}\|(-\Delta)^{-\frac{s}{2}}(g_{\tau_0}-h)\|_{2}^2$.
Assume that
$$\phi_{\tau}=c_{\tau}\frac{\lambda_{\tau}^{\frac{n+2s}{2}}}{(1+\lambda_\tau^2|x-x_\tau|^2)^{\frac{n+2s}{2}}}$$
with $c_{\tau}$, $\lambda_{\tau}$ being not equal to zero. Obviously, the limit $\lim\limits_{\tau\rightarrow \tau_0}c_{\tau}$ exist from (\ref{convergence}). Next, we prove that $\lambda_{\tau}$ is bounded. Suppose not, then there exists a subsequence $\tau_k\rightarrow \tau_0$ as $k\rightarrow +\infty$ such that $\lim\limits_{k\rightarrow +\infty}\lambda_{\tau_k}=+\infty$. Careful computation gives that
\begin{equation}\nonumber
\|(-\Delta)^{-\frac{s}{2}}r_{\tau_k}\|_{2}^2=\|(-\Delta)^{-\frac{s}{2}}g_{\tau_k}\|_{2}^2+\|(-\Delta)^{-\frac{s}{2}}\phi_{\tau_k}\|_{2}^2-2\int_{\mathbb{R}^n}
(-\Delta)^{-\frac{s}{2}}g_{\tau_k}(-\Delta)^{-\frac{s}{2}}\phi_{\tau_k}dx.
\end{equation}

We claim that if $\lambda_{\tau_k}\rightarrow +\infty$ as $k\rightarrow +\infty$, then
$$\lim\limits_{k\rightarrow +\infty}
\int_{\mathbb{R}^n}(-\Delta)^{-\frac{s}{2}}g_{\tau_k}(-\Delta)^{-\frac{s}{2}}\phi_{\tau_k}dx=0.$$
Combining this and \eqref{weak-continuity}, we derive that
\begin{equation}\begin{split}\nonumber
\inf\limits_{h\in M_{HLS}}\|(-\Delta)^{-\frac{s}{2}}(g_{\tau_0}-h)\|_{2}^2&=\lim\limits_{k\rightarrow+\infty}\|(-\Delta)^{-\frac{s}{2}}r_{\tau_k}\|_{2}^2\\
&=\lim\limits_{k\rightarrow +\infty}\left(\|(-\Delta)^{-\frac{s}{2}}g_{\tau_k}\|_{2}^2+\|(-\Delta)^{-\frac{s}{2}}\phi_{\tau_k}\|_{2}^2\right)\\
&=\|(-\Delta)^{-\frac{s}{2}}g_{\tau_0}\|_{2}^2+\|(-\Delta)^{-\frac{s}{2}}\widetilde{\phi}_{\tau_0}\|_{2}^2,
\end{split}\end{equation}
which is a contradiction with $$\|(-\Delta)^{-\frac{s}{2}}g_{\tau_0}\|_{2}^2=\inf\limits_{h\in M_{HLS}}\|(-\Delta)^{-\frac{s}{2}}(g_{\tau_0}-h)\|_{2}^2+\|(-\Delta)^{-\frac{s}{2}}\widetilde{\phi}_{\tau_0}\|_{2}^2$$
and the fact $\|(-\Delta)^{-\frac{s}{2}}\widetilde{\phi}_{\tau_0}\|_{2}\neq 0$. Hence in order to prove the boundedness of $\lambda_\tau$, we only need to prove that $$\lim\limits_{k\rightarrow +\infty}
\int_{\mathbb{R}^n}(-\Delta)^{-\frac{s}{2}}g_{\tau_k}(-\Delta)^{-\frac{s}{2}}\phi_{\tau_k}dx=0$$
if $\lambda_{\tau_k}\rightarrow +\infty$.
We distinguish this by two cases.

\vskip 0.1cm

\emph{Case 1:} $\{x_{\lambda_k}\}_k$ is bounded. With loss of generality, we may assume that $x_{\lambda_k}$ is equal to zero. First it is easy to show that
 that for any $\epsilon>0$, $\lim\limits_{k\rightarrow \infty}\int_{\mathbb{R}^n\setminus B_{\epsilon}}|\phi_{\tau_k}|^{\frac{2n}{n+2s}}dx=0$ since $\lambda_{\tau_k}\rightarrow \infty$ as $k\rightarrow \infty$.
Direct computation yields that
\begin{equation}\begin{split}\nonumber
\int_{\mathbb{R}^n}(-\Delta)^{-\frac{s}{2}}g_{\tau_k}(-\Delta)^{-\frac{s}{2}}\phi_{\tau_k}dx&=\int_{\mathbb{R}^n}\int_{\mathbb{R}^n\setminus B_{\epsilon}}
\frac{\phi_{\tau_k}(y)}{|x-y|^{n-2s}}dy g_{\tau_k}(x)dx\\
&+\int_{\mathbb{R}^n}\int_{B_{\epsilon}}
\frac{\phi_{\tau_k}(y)}{|x-y|^{n-2s}}dy g_{\tau_k}(x)dx.\\
\end{split}\end{equation}
Through HLS inequality, we have $$\lim\limits_{k\rightarrow \infty}\int_{\mathbb{R}^n}\int_{\mathbb{R}^n\setminus B_{\epsilon}}
\frac{\phi_{\tau_k}(y)}{|x-y|^{n-2s}}dy g_{{\tau_k}}(x)dx\lesssim \lim\limits_{k\rightarrow \infty}\left(\int_{\mathbb{R}^n\setminus B_{\epsilon}}|\phi_{{\tau_k}}|^{\frac{2n}{n+2s}}dx\right)^{\frac{n+2s}{2n}}\|g_{\tau_k}\|_{\frac{2n}{n+2s}}=0.$$
Next, we prove $\lim\limits_{k\rightarrow \infty}\int_{\mathbb{R}^n}\int_{B_{\epsilon}}
\frac{\phi_{\tau_k}(y)}{|x-y|^{n-2s}}dy g_{{\tau_k}}(x)dx=0$. We split this double integral into two parts:
\begin{equation}\begin{split}\nonumber
&\int_{\mathbb{R}^n}\int_{B_{\epsilon}}
\frac{\phi_{\tau_k}(y)}{|x-y|^{n-2s}}dy g_{{\tau_k}}(x)dx\\
&=\int_{B_{2\epsilon}}\int_{B_{\epsilon}}
\frac{\phi_{\tau_k}(y)}{|x-y|^{n-2s}}dy g_{{\tau_k}}(x)dx+\int_{\mathbb{R}^n\setminus B_{2\epsilon}}\int_{B_{\epsilon}}
\frac{\phi_{\tau_k}(y)}{|x-y|^{n-2s}}dy g_{{\tau_k}}(x)dx\\
&=I+II.
\end{split}\end{equation}
For $I$, through HLS inequality and $g_{{\tau_k}}\rightarrow g_{\tau_0}$ in $L^{\frac{2n}{n+2s}}(\mathbb{R}^n)$, we obtain that
\begin{equation}\begin{split}\nonumber
& \lim\limits_{k\rightarrow \infty}\lim\limits_{k\rightarrow \infty}\int_{B_{2\epsilon}}\int_{B_{\epsilon}}
\frac{\phi_{\tau_k}(y)}{|x-y|^{n-2s}}dy g_{{\tau_k}}(x)dx\\
& \lesssim\lim\limits_{k\rightarrow \infty}\lim\limits_{\tau\rightarrow \tau_0}\left(\int_{B_{2\epsilon}}|g_{\tau_k}|^{\frac{2n}{n+2s}}\right)^{\frac{2n}{n+2s}}\|\phi_{\tau_k}\|_{\frac{2n}{n+2s}}=0.
\end{split}\end{equation}
For $II$, we can write
\begin{equation}\begin{split}\nonumber
\int_{\mathbb{R}^n\setminus B_{2\epsilon}}\int_{B_{\epsilon}}
\frac{\phi_{\tau_k}(y)}{|x-y|^{n-2s}}dy g_{{\tau_k}}(x)dx&\leq \frac{1}{\epsilon^{\kappa}}\int_{\mathbb{R}^n\setminus B_{2\epsilon}}\int_{B_{\epsilon}}
\frac{\phi_{\tau_k}(y)}{|x-y|^{n-2s-\kappa}}dy g_{{\tau_k}}(x)dx\\
&\lesssim  \frac{1}{\epsilon^{\kappa}}\|g_{{\tau_k}}\|_{\frac{2n}{n+2s}}\left(\int_{B_{\epsilon}}|\phi_{\tau_k}|^{q}dx\right)^{\frac{1}{q}},
\end{split}\end{equation}
where $\kappa$ is sufficiently small and $q$ satisfying $\frac{1}{q}+\frac{n+2s}{2n}+\frac{n-2s-\kappa}{n}=2$.
Obviously we have $0<q<\frac{2n}{n+2s}$. Since $|\phi_{\tau_k}|^{\frac{2n}{n+2s}}\rightharpoonup \|\phi_{\tau_0}\|_{\frac{2n}{n+2s}}^{\frac{2n}{n+2s}}\delta_0$, then it follows that for any $0<q<\frac{2n}{n+2s}$ and bounded domain $\Omega$ containing origin, there holds
$$\lim\limits_{k\rightarrow \infty}\int_{\Omega}|\phi_{\tau_k}|^{q}dx=0.$$ This yields that
$$\lim\limits_{k\rightarrow \infty}II=0.$$ In conclusion, we finish the proof of $$\lim\limits_{k\rightarrow \infty}
\int_{\mathbb{R}^n}(-\Delta)^{-\frac{s}{2}}g_{{\tau_k}}(-\Delta)^{-\frac{s}{2}}\phi_{{\tau_k}}dx=0$$ when $\lambda_{\tau_k}\rightarrow \infty$ in the case of $\{x_{{\tau_k}}\}$ being bounded.
\medskip

\emph{Case 2:} $\{x_{{\tau_k}}\}_{k}$ is unbounded. We assume that $|x_{{\tau_k}}|\rightarrow +\infty$. Careful computation gives that
\begin{equation}\begin{split}\nonumber
&\int_{\mathbb{R}^n}(-\Delta)^{-\frac{s}{2}}g_{{\tau_k}}(-\Delta)^{-\frac{s}{2}}\phi_{{\tau_k}}dx=\int_{\mathbb{R}^n}\int_{\mathbb{R}^n}
\frac{\phi_{\tau_k}(y+x_{\tau_k})}{|x-y|^{n-2s}}dy g_{{\tau_k}}(x+x_{\tau_k})dx\\
&=\int_{\mathbb{R}^n}\int_{B_{\epsilon}}
\frac{\phi_{\tau_k}(y+x_{\tau_k})}{|x-y|^{n-2s}}dy g_{{\tau_k}}(x+x_{\tau_k})dx+\int_{\mathbb{R}^n}\int_{\mathbb{R}^n\setminus B_{\epsilon}}
\frac{\phi_{\tau_k}(y+x_{\tau_k})}{|x-y|^{n-2s}}dy g_{{\tau_k}}(x+x_{\tau_k})dx.
\end{split}\end{equation}
 As what we did in \emph{Case 1}, we similarly derive that
\begin{equation*}\begin{split}
&\int_{\mathbb{R}^n}\int_{\mathbb{R}^n\setminus B_{\epsilon}}
\frac{\phi_{\tau_k}(y+x_{\tau_k})}{|x-y|^{n-2s}}dy g_{{\tau_k}}(x+x_{{\tau_k}})dx\\
& \lesssim \lim\limits_{k\rightarrow \infty}\left(\int_{\mathbb{R}^n\setminus B_{\epsilon}}|\phi_{{\tau_k}}(x+x_{\tau_k})|^{\frac{2n}{n+2s}}dx\right)^{\frac{n+2s}{2n}}\|g_{\tau_k}\|_{\frac{2n}{n+2s}}=0,
\end{split}\end{equation*}
since $|\phi_{\tau_k}(x+x_{\tau_k})|^{\frac{2n}{n+2s}}\rightharpoonup \|\phi_{\tau_0}\|_{\frac{2n}{n+2s}}^{\frac{2n}{n+2s}}\delta_0$.

For $\int_{\mathbb{R}^n}\int_{\mathbb{R}^n\setminus B_{\epsilon}}
\frac{\phi_{\tau_k}(y+x_{\tau_k})}{|x-y|^{n-2s}}dy g_{{\tau_k}}(x+x_{\tau_k})dx$, we can write
\begin{equation}\begin{split}\nonumber
&\int_{\mathbb{R}^n}\int_{B_{\epsilon}}
\frac{\phi_{\tau_k}(y+x_{{\tau_k}})}{|x-y|^{n-2s}}dy g_{{\tau_k}}(x+x_{{\tau_k}})dx\\
&\ \ =\int_{B_R}\int_{B_{\epsilon}}
\frac{\phi_{\tau_k}(y+x_{{\tau_k}})}{|x-y|^{n-2s}}dy g_{{\tau_k}}(x+x_{{\tau_k}})dx+\int_{\mathbb{R}^n\setminus B_{R}}\int_{B_{\epsilon}}
\frac{\phi_{\tau_k}(y+x_{{\tau_k}})}{|x-y|^{n-2s}}dy g_{{\tau_k}}(x+x_{{\tau_k}})dx\\
&\ \ =I+II.
\end{split}\end{equation}
For $I$, through HLS inequality, we immediately derive that
$$\int_{B_R}\int_{B_{\epsilon}}
\frac{\phi_{\tau_k}(y+x_{{\tau_k}})}{|x-y|^{n-2s}}dy g_{{\tau_k}}(x+x_{{\tau_k}})dx\lesssim \left(\int_{B_{R}}|g_{\tau_k}(x+x_{{\tau_k}})|^{\frac{2n}{n+2s}}dx\right)^{\frac{2n}{n+2s}}\|\phi_{\tau_k}\|_{\frac{2n}{n+2s}}=0$$
as $k\rightarrow \infty$ by the fact
$$\lim\limits_{k\rightarrow \infty}\int_{B_{R}}|g_{\tau_k}(x+x_{{\tau_k}})|^{\frac{2n}{n+2s}}\leq \lim\limits_{k\rightarrow \infty}\int_{\{|x|\geq |x_{{\tau_k}}|-R\}}|g_{\tau_k}(x)|^{\frac{2n}{n+2s}}=0,$$
since $g_{\tau_k}$ converges strongly to $g_{\tau_0}$ in $L^{\frac{2n}{n+2s}}(\mathbb{R}^n)$ and $\lambda_{{\tau_k}}\rightarrow \infty$. For $II$, we can write
\begin{equation}\begin{split}\nonumber
&\int_{\mathbb{R}^n\setminus B_{R}}\int_{B_{\epsilon}}
\frac{\phi_{\tau_k}(y+x_{{\tau_k}})}{|x-y|^{n-2s}}dy g_{{\tau_k}}(x+x_{{\tau_k}})dx\\
&\ \ \leq \frac{1}{(R-\epsilon)^{\kappa}}\int_{\mathbb{R}^n\setminus B_{R}}\int_{B_{\epsilon}}
\frac{\phi_{\tau_k}(y+x_{\tau_k})}{|x-y|^{n-2s-\kappa}}dy g_{{\tau_k}}(x+x_{\tau_k})dx\\
&\ \ \lesssim  \frac{1}{(R-\epsilon)^{\kappa}}\|g_{{\tau_k}}\|_{\frac{2n}{n+2s}}\left(\int_{B_{\epsilon}}|\phi_{\tau_k}(x+x_{{\tau_k}})|^{q}dx\right)^{\frac{1}{q}},
\end{split}\end{equation}
where $\kappa$ is sufficiently small and $q$ satisfying $\frac{1}{q}+\frac{n+2s}{2n}+\frac{n-2s-\kappa}{n}=2$.
 As what we did in \emph{Case 1}, we can get
$$\lim\limits_{k\rightarrow \infty}\int_{B_{\epsilon}}|\phi_{\tau_k}(x+x_{{\tau_k}})|^{q}dx=0.$$ This yields that
$$\lim\limits_{k\rightarrow \infty}II=0.$$ In conclusion, we finish the proof of $$\lim\limits_{k\rightarrow \infty}
\int_{\mathbb{R}^n}(-\Delta)^{-\frac{s}{2}}g_{{\tau_k}}(-\Delta)^{-\frac{s}{2}}\phi_{{\tau_k}}dx=0$$ in the case of $\{x_{{\tau_k}}\}$ being unbounded.
\medskip

Combining the Case 1 and Case 2, we finish the proof of boundedness of $\{\lambda_{\tau}\}$.
\medskip

Next, we claim that $\lambda_{\tau}$ must have the positive lower bound. We argue this by contradiction. Suppose not, there exists a subsequence $\tau_k\rightarrow \tau_0$ such that $\lambda_{\tau_k}\rightarrow 0$ as
$k\rightarrow \infty$, then $\phi_{{\tau_k}}$ strongly converges to zero in $L^{\infty}(\mathbb{R}^n)$. It is also not difficult to check that $\phi_{{\tau_k}}\rightharpoonup 0$ weakly in $L^{\frac{2n}{n+2s}}(\mathbb{R}^n)$ and $|\phi_{{\tau_k}}|^{\frac{2n}{n+2s}-2}\phi_{{\tau_k}}\rightharpoonup 0$ weakly in $L^{\frac{2n}{n-2s}}(\mathbb{R}^n)$. Since $g_{\tau_0}\in L^{\frac{2n}{n+2s}}(\mathbb{R}^n)$, by the property of weak convergence of $|\phi_{{\tau_k}}|^{\frac{2n}{n+2s}}\phi_{{\tau_k}}$, we immediately
derive that
\begin{equation}\nonumber
\lim\limits_{k\rightarrow \infty}\int_{\mathbb{R}^n}g_{\tau_0}|\phi_{{\tau_k}}|^{\frac{2n}{n+2s}-2}\phi_{{\tau_k}}dx=0,
\end{equation}
which implies that
\begin{equation}\label{imp1}
\lim\limits_{k\rightarrow \infty}\int_{\mathbb{R}^n}(-\Delta)^{-\frac{s}{2}}g_{\tau_0}(-\Delta)^{-\frac{s}{2}}\phi_{{\tau_k}}dx=0
\end{equation}
since $\phi_{{\tau_k}}$ is the extremal function of HLS inequality and satisfies the Euler-Lagrange Equation $(-\Delta)^{-s}\phi=\mathcal{S}_{s,n}|\phi|^{-\frac{4s}{n+2s}}\phi$. On the other hand, since $g_{{\tau_k}}$ strongly converges to $g_{\tau_0}$ in $L^{\frac{2n}{n+2s}}(\mathbb{R}^n)$, we derive that
\begin{equation}\label{imp2}\begin{split}
\lim\limits_{k\rightarrow \infty}\int_{\mathbb{R}^n}(-\Delta)^{-\frac{s}{2}}(g_{{\tau_k}}-g_{\tau_0})(-\Delta)^{-\frac{s}{2}}\phi_{{\tau_k}}dx&\lesssim \lim\limits_{k\rightarrow \infty}\|g_{{\tau_k}}-g_{\tau_0}\|_{\frac{2n}{n+2s}}\|\phi_{{\tau_k}}\|_{\frac{2n}{n+2s}}^{\frac{n-2s}{n+2s}}=0.
\end{split}\end{equation}
Combining \eqref{imp1} and \eqref{imp2}, we conclude that
$$\lim\limits_{k\rightarrow \infty}\int_{\mathbb{R}^n}(-\Delta)^{-\frac{s}{2}}g_{{\tau_k}}(-\Delta)^{-\frac{s}{2}}\phi_{{\tau_k}}dx=0.$$
Then it follows that \begin{equation}\begin{split}\nonumber
\|(-\Delta)^{-\frac{s}{2}}r_{\tau_0}\|_{2}^2&=\lim\limits_{k\rightarrow \infty}\|(-\Delta)^{-\frac{s}{2}}r_{{\tau_k}}\|_{2}^2\\
&=\lim\limits_{k\rightarrow \infty}\left(\|(-\Delta)^{-\frac{s}{2}}g_{{\tau_k}}\|_{2}^2+\|(-\Delta)^{-\frac{s}{2}}\phi_{{\tau_k}}\|_{2}^2\right)\\
&=\|(-\Delta)^{-\frac{s}{2}}g_{\tau_0}\|_{2}^2+\|(-\Delta)^{-\frac{s}{2}}\phi_{\tau_0}\|_{2}^2,
\end{split}\end{equation}
which is a contradiction with $$\|(-\Delta)^{-\frac{s}{2}}g_{\tau_0}\|_{2}^2=\|(-\Delta)^{-\frac{s}{2}}r_{\tau_0}\|_{2}^2+\|(-\Delta)^{-\frac{s}{2}}\phi_{\tau_0}\|_{2}^2,$$
and the fact $\|(-\Delta)^{-\frac{s}{2}}\phi_{\tau_0}\|_{2}\neq 0$. This proves that $\lambda_{{\tau_k}}$ has the positive lower bound.

\medskip

Next we start to show that $\{x_{{\tau_k}}\}_{k}$ is also bounded. We argue this by contradiction again. Suppose not, there exists subsequence $\tau_k\rightarrow \tau_0$ such that $$\lim\limits_{k\rightarrow +\infty}|x_{\tau_k}|=+\infty.$$ We will show
$$\lim\limits_{k\rightarrow +\infty}\int_{\mathbb{R}^n}(-\Delta)^{-\frac{s}{2}}g_{\tau_k}(-\Delta)^{-\frac{s}{2}}\phi_{\tau_k}dx=0.$$
 Since we have proved that $\lambda_{{\tau_k}}$ is bounded, if $|x_{\lambda_{\tau_k}}|\rightarrow +\infty$, then $\phi_{{\tau_k}}$ strongly converges to zero in $L^{\infty}_{loc}(\mathbb{R}^n)$ as $k\rightarrow \infty$. Write
\begin{equation}\begin{split}\nonumber
&\int_{\mathbb{R}^n}(-\Delta)^{-\frac{s}{2}}g_{{\tau_k}}(-\Delta)^{-\frac{s}{2}}\phi_{{\tau_k}}dx\\
&\ \ =\int_{\mathbb{R}^n}\int_{\mathbb{R}^n\setminus B_{2R}}
\frac{\phi_{\tau_k}(y)}{|x-y|^{n-2s}}dy g_{{\tau_k}}(x)dx+\int_{\mathbb{R}^n}\int_{B_{2R}}
\frac{\phi_{\tau_k}(y)}{|x-y|^{n-2s}}dy g_{{\tau_k}}(x)dx.\\
\end{split}\end{equation}
For $\int_{\mathbb{R}^n}\int_{B_{2R}}
\frac{\phi_{\tau_k}(y)}{|x-y|^{n-2s}}dy g_{{\tau_k}}(x)dx$, it holds that
$$\lim\limits_{k\rightarrow \infty}\int_{\mathbb{R}^n}\int_{B_{2R}}
\frac{\phi_{\tau_k}(y)}{|x-y|^{n-2s}}dy g_{{\tau_k}}(x)dx\lesssim \lim\limits_{k\rightarrow \infty}\|g_{{\tau_k}}\|_{\frac{2n}{n+2s}}\|\phi_{{\tau_k}}\|_{L^{\infty}(B_R)}|B_{2R}|^{\frac{n+2s}{2n}}=0.$$
For $\int_{\mathbb{R}^n}\int_{\mathbb{R}^n\setminus B_{2R}}
\frac{\phi_{\tau_k}(y)}{|x-y|^{n-2s}}dy g_{{\tau_k}}(x)dx$, we can write
\begin{equation}\begin{split}\nonumber
&\int_{\mathbb{R}^n}\int_{\mathbb{R}^n\setminus B_{2R}}
\frac{\phi_{\tau_k}(y)}{|x-y|^{n-2s}}dy g_{{\tau_k}}(x)dx\\
&\ \ =\int_{\mathbb{R}^n\setminus B_{R}}\int_{\mathbb{R}^n\setminus B_{2R}}
\frac{\phi_{\tau_k}(y)}{|x-y|^{n-2s}}dy g_{{\tau_k}}(x)dx+\int_{B_{R}}\int_{\mathbb{R}^n\setminus B_{2R}}
\frac{\phi_{\tau_k}(y)}{|x-y|^{n-2s}}dy g_{{\tau_k}}(x)dx\\
&=I+II.
\end{split}\end{equation}
For $I$, we immediately have
\begin{equation}\begin{split}\nonumber
&\lim\limits_{R\rightarrow+\infty}\lim\limits_{k\rightarrow \infty}\int_{\mathbb{R}^n\setminus B_{R}}\int_{\mathbb{R}^n\setminus B_{2R}}
\frac{\phi_{\tau_k}(y)}{|x-y|^{n-2s}}dy g_{{\tau_k}}(x)dx\\
&\ \ \lesssim \lim\limits_{R\rightarrow+\infty}\lim\limits_{k\rightarrow \infty}\left(\int_{\mathbb{R}^n\setminus B_{R}}|g_{{\tau_k}}|^{\frac{2n}{n+2s}}dx\right )^{\frac{n+2s}{n}}\|\phi_{{\tau_k}}\|_{\frac{2n}{n+2s}}\\
&\ \ =\lim\limits_{R\rightarrow+\infty}\left(\int_{\mathbb{R}^n\setminus B_{R}}|g_{\tau_0}|^{\frac{2n}{n+2s}}dx\right)^{\frac{n+2s}{n}}\|\phi_{\tau_0}\|_{\frac{2n}{n+2s}}=0.
\end{split}\end{equation}
For $II$, direct computation gives that
\begin{equation}\begin{split}\nonumber
&\int_{B_{R}}\int_{\mathbb{R}^n\setminus B_{2R}}
\frac{\phi_{\tau_k}(y)}{|x-y|^{n-2s}}dy g_{{\tau_k}}(x)dx\\
&\ \ \leq \frac{1}{(R)^{\kappa}}\int_{B_R}\int_{\mathbb{R}^n}
\frac{\phi_{\tau_k}(y)}{|x-y|^{n-2s-\kappa}}dy g_{{\tau_k}}(x)dx\\
&\ \ \lesssim  \frac{1}{R^{\kappa}}\|u_{{\tau_k}}\|_{\frac{2n}{n+2s}}\|\phi_{{\tau_k}}\|_{q},
\end{split}\end{equation}
where $\kappa$ is sufficiently small and $q$ satisfies $\frac{1}{q}+\frac{n+2s}{2n}+\frac{n-2s-\kappa}{n}=2$. Since we have already proved that $C_1\leq \lambda_{{\tau_k}}\leq C_2$, it is not difficult to check that $\int_{\mathbb{R}^n}|\phi_{{\tau_k}}|^qdx$ is bounded if $q$ is a little bigger than $\frac{2n}{n+2s}$.

Combining the above estimates, we conclude that
$$\lim\limits_{R\rightarrow +\infty}\lim\limits_{k\rightarrow\infty}\int_{B_{R}}\int_{\mathbb{R}^n\setminus B_{2R}}
\frac{\phi_{\tau_k}(y)}{|x-y|^{n-2s}}dy g_{{\tau_k}}(x)dx=0.$$ Thus, we have proved that $\{x_{{\tau_k}}\}_{k}$ is bounded.
\medskip

Since we have proved that $\{\lambda_{\tau}\}_{\tau}$ and $\{x_{\tau}\}_{\tau}$ are bounded, $\lambda_{\tau}$ has the positive lower bound,
$\lim\limits_{\tau\rightarrow \tau_0}c_{\tau}=c_{\tau_0}$, then there exist subsequence $\tau_k\rightarrow \tau_0$, $\lambda_{\tau_k}\rightarrow \lambda_{\tau_0}$, $x_{\tau_k}\rightarrow x_{\tau_0}$ as $k\rightarrow +\infty$ such that
$$\phi_{\tau_k}=c_{\tau_k} \frac{\lambda_{\tau_k}^{\frac{n+2s}{2}}}{\left(1+\lambda_{\tau_k}^2|x-x_{\tau_k}|^2\right)^{\frac{n+2s}{2}}}\rightarrow c_{\tau_0} \frac{\lambda_{\tau_0}^{\frac{n+2s}{2}}}{\left(1+\lambda_{\tau_0}^2|x-x_{\tau_0}|^2\right)^{\frac{n+2s}{2}}}:=\phi_{\tau_0},\ \ \forall\ x\in \mathbb{R}^n.$$
This together with Brezis-Lieb lemma implies that $\phi_{\tau_k}$ strongly converges to $\phi_{\tau_0}$ in $L^{\frac{2n}{n+2s}}(\mathbb{R}^n)$ as $k\rightarrow +\infty$.

Then we immediately have
\begin{equation}\begin{split}
\lim_{k\rightarrow \infty}\|(-\Delta)^{-\frac{s}{2}}(g_{\tau_k}-\phi_{\tau_k})\|_2=\|(-\Delta)^{-\frac{s}{2}}(g_{\tau_0}-\phi_{\tau_0})\|_2,
\end{split}\end{equation}
since $g_{\tau_k}-g_{\tau_0}\rightarrow 0$, $\phi_{\tau_k}-\phi_{\tau_0}\rightarrow 0$ in $L^{\frac{2n}{n+2s}}(\mathbb{R}^n)$. On the other hand, we have already proved that
$$\lim\limits_{k\rightarrow \infty}\int_{\mathbb{R}^n}|(-\Delta)^{-\frac{s}{2}}(g_{\tau_k}-\phi_{\tau_k})|^2dx=\inf\limits_{h\in M_{HLS}}\|(-\Delta)^{-\frac{s}{2}}(g_{\tau_0}-h)\|_{2}^2,$$
which means  $\phi_{\tau_0}$ is a extremals of $\inf\limits_{h\in M_{HLS}}\|(-\Delta)^{-\frac{s}{2}}(g_{\tau_0}-h)\|_{2}^2$.  Then, we accomplish the proof of Lemma \ref{continuity}.

\subsection{The stability of the HLS inequality}
Now we are in the position to prove the  stability of HLS inequality with the optimal asymptotic lower bound. In fact, by Lemma of \cite{CLT1}, a concavity argument can be adopted to connect the stability of nonnegative functions and general functions. Let us denote by $C_{HLS}$ the optimal constant for stability of the HLS inequality and denote by $C^{pos}_{HLS}$ the optimal constant for stability of the HLS inequality  when restricted to nonnegative functions. The relationship between these two optimal constants states

\begin{lemma}\label{constant}
$$C_{HLS}\geq \frac{1}{2}\min\{C^{pos}_{HLS},\min\{2^{\frac{n+2s}{n}}-2,1\}\}.$$
\end{lemma}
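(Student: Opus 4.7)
The plan is to reduce the signed case to the nonnegative case through the positive/negative decomposition $g=g_+-g_-$ together with a sharp concavity estimate. Write $p=\tfrac{2n}{n+2s}$ and $q=2/p=\tfrac{n+2s}{n}$, so $p,q\in(1,2)$. Both the deficit $\delta(g):=\|g\|_p^2-\mathcal{S}_{s,n}\|(-\Delta)^{-s/2}g\|_2^2$ and the distance $d(g,\mathcal{M}_{HLS}):=\inf_{h\in\mathcal{M}_{HLS}}\|g-h\|_p$ are invariant under $g\mapsto -g$ (since $\mathcal{M}_{HLS}$ is closed under negation), so I may assume $\|g_+\|_p\geq\|g_-\|_p$.

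The key step is a sharp concavity lower bound on $\|g\|_p^2$. Since $g_\pm$ have disjoint supports, $\|g\|_p^p=\|g_+\|_p^p+\|g_-\|_p^p$, and with $t:=(\|g_-\|_p/\|g_+\|_p)^p\in[0,1]$ one has $\|g\|_p^2=\|g_+\|_p^2(1+t)^q$. The function $\phi(t):=(1+t)^q-1-t^q$ is concave on $[0,1]$ for $q\in(1,2)$ because $\phi''(t)=q(q-1)\bigl((1+t)^{q-2}-t^{q-2}\bigr)<0$, and it satisfies $\phi(0)=0$, $\phi(1)=2^q-2$. Concavity gives $\phi(t)\geq(2^q-2)t$, and applying $\|g_+\|_p\geq\|g_-\|_p$ with $2-p>0$ yields
\begin{equation*}
\|g\|_p^2\geq\|g_+\|_p^2+\|g_-\|_p^2+(2^q-2)\|g_+\|_p^{2-p}\|g_-\|_p^{p}\geq\|g_+\|_p^2+(2^q-1)\|g_-\|_p^2.
\end{equation*}
Combining with $\|(-\Delta)^{-s/2}g\|_2^2\leq\|(-\Delta)^{-s/2}g_+\|_2^2+\|(-\Delta)^{-s/2}g_-\|_2^2$ (from positivity of the Riesz kernel and $g_\pm\geq0$) and the nonnegative-function stability $\delta(g_+)\geq C^{pos}_{HLS}\,d(g_+,\mathcal{M}_{HLS})^2$, and dropping the nonnegative residual $\delta(g_-)\geq 0$,
\begin{equation*}
\delta(g)\geq C^{pos}_{HLS}\,d(g_+,\mathcal{M}_{HLS})^2+(2^q-2)\|g_-\|_p^2.
\end{equation*}

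To close, pick $h_*\in\mathcal{M}_{HLS}$ attaining $d(g_+,\mathcal{M}_{HLS})$; the triangle inequality and $(a+b)^2\leq 2(a^2+b^2)$ give $d(g,\mathcal{M}_{HLS})^2\leq\|g-h_*\|_p^2\leq 2\bigl(d(g_+,\mathcal{M}_{HLS})^2+\|g_-\|_p^2\bigr)$. Bounding the coefficients of the two nonnegative quantities by their minimum then yields
\begin{equation*}
\delta(g)\geq\min\{C^{pos}_{HLS},\,2^q-2\}\bigl(d(g_+,\mathcal{M}_{HLS})^2+\|g_-\|_p^2\bigr)\geq\tfrac{1}{2}\min\{C^{pos}_{HLS},\,2^q-2\}\,d(g,\mathcal{M}_{HLS})^2.
\end{equation*}
Since $\min\{2^q-2,1\}\leq 2^q-2$, this is at least the claimed bound $\tfrac{1}{2}\min\{C^{pos}_{HLS},\min\{2^{(n+2s)/n}-2,1\}\}$.

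The technical heart is the concavity step—checking $\phi''<0$ on $(0,1]$ (which hinges on $q-2<0$) and extracting the clean $(2^q-1)\|g_-\|_p^2$ term via the normalization $\|g_+\|_p\geq\|g_-\|_p$. Everything else is triangle inequality, HLS positivity, and a direct appeal to the nonnegative stability result already in hand.
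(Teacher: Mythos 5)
Your proof is correct and follows the same concavity-based strategy that the paper attributes (without reproducing) to Lemma~5.2 of \cite{CLT1}: split $g = g_+ - g_-$, exploit positivity of the Riesz kernel to drop the cross term in $\|(-\Delta)^{-s/2}g\|_2^2$, and extract a surplus term from the strict superadditivity of $t\mapsto t^{q}$ ($q=\tfrac{n+2s}{n}\in(1,2)$) via concavity of $\phi(t)=(1+t)^q-1-t^q$. The chord bound $\phi(t)\geq(2^q-2)t$, the normalization $\|g_+\|_p\geq\|g_-\|_p$, the triangle inequality with $(a+b)^2\leq 2(a^2+b^2)$, and the nonnegative-case stability are exactly the ingredients one expects. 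In fact you obtain the slightly stronger constant $\tfrac12\min\{C^{pos}_{HLS},\,2^{q}-2\}$, from which the stated bound with $\min\{2^{q}-2,1\}$ follows trivially. One cosmetic point: you invoke a minimizer $h_*\in\mathcal{M}_{HLS}$ attaining $d(g_+,\mathcal{M}_{HLS})$ in the $L^p$ metric; attainment is not needed, since a near-minimizer $h_\varepsilon$ with $\|g_+-h_\varepsilon\|_p\leq d(g_+,\mathcal{M}_{HLS})+\varepsilon$ works in the triangle-inequality step and letting $\varepsilon\to0$ gives the same conclusion.
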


Thus we only need to consider the nonnegative function. Recall that we say $g$ has a decomposition means $g=\phi+r$ with $\phi$ satisfying $\phi\in \mathcal{M}_{HLS}$ and $$\|(-\Delta)^{-\frac{s}{2}}(g-\phi)\|_{2}=\inf\limits_{h\in M_{HLS}}\|(-\Delta)^{-\frac{s}{2}}(g-h)\|_{2}.$$
In Section 2  we have proved that if a nonnegative function $g\in L^{\frac{2n}{n+2s}}(\mathbb{R}^n)$ has a decomposition
$g=\phi+r$ with  $\|r\|_{\frac{2n}{n+2s}}^2\leq \delta \|g\|_{\frac{2n}{n+2s}}^2$, then
$$\left\| g \right\|_{\frac{2n}{n+2s}}^2-\mathcal S_{s,n} \|(-\Delta)^{-s/2}g\|_{2}^2\geq \frac{1}{n}C_s\inf_{h\in \mathcal{M}_{HLS}}\|g-h\|_{\frac{2n}{n+2s}}^2.$$

So we only need to handle the stability of HLS inequality when the nonnegative function $g$ satisfies that for all  decompositions $g=\phi+r$, there holds $\|r\|_{\frac{2n}{n+2s}}^2>\delta \|g\|_{\frac{2n}{n+2s}}^2$.
\begin{lemma}\label{nonlocal stability}
For fixed $s\in [1, \frac{n}{2})$, there exist $\delta\in(0,1)$, $n_0$ and $C_s$ which are dependent on $s$ such that for all $0 \leq g\in L^{\frac{2n}{n+2s}}(\mathbb{R}^n)$ with
\begin{align*}
\|r\|_{\frac{2n}{n+2s}}^2\geq \delta\|g\|_{\frac{2n}{n+2s}}^2,
\end{align*}
there holds
$$\left\| g \right\|_{\frac{2n}{n+2s}}^2-\mathcal S_{s,n} \|(-\Delta)^{-s/2}g\|_{2}^2\geq \frac{1}{n}C_s\inf_{h\in \mathcal{M}_{HLS}}\|g-h\|_{\frac{2n}{n+2s}}^2$$
when $n\geq n_0$.
\end{lemma}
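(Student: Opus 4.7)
The plan is to reduce to the local stability (Lemma~\ref{local stability}) via the Carlen--Loss competing symmetries method and the continuous Brock rearrangement flow, using a first-crossing-time argument; the two main tools for coping with the non-uniqueness of the $H^{-s}$-closest element of $\mathcal{M}_{HLS}$ are Lemma~\ref{comparable} (comparability of the norms $\|r\|_{\frac{2n}{n+2s}}$) and Lemma~\ref{continuity} (weak continuity of the decomposition along the flow).

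Write $\mathcal{D}(g):=\|g\|_{\frac{2n}{n+2s}}^{2}-\mathcal{S}_{s,n}\|(-\Delta)^{-s/2}g\|_{2}^{2}$. First I would observe that $\mathcal{D}$ is non-increasing under the conformal map $U$ (which preserves both terms), under the symmetric decreasing rearrangement $\mathcal{R}$, and along the continuous Brock flow (by the Riesz inequality of Appendix~A in \cite{DEFFL}), so $\mathcal{D}$ is non-increasing along the Carlen--Loss iteration $T=\mathcal{R}U$ and along any concatenation with Brock flows. Next, Theorem~\ref{compete sym} gives $g_k:=T^{k}g\to h_g\in\mathcal{M}_{HLS}$ in $L^{\frac{2n}{n+2s}}$, and Lemma~\ref{strong} then forces $\|r_k\|_{\frac{2n}{n+2s}}\to 0$ for every admissible decomposition $g_k=\phi_k+r_k$. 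Thus there is an integer $K_0=K_0(g)$ such that
\[
\|r_{K_0}\|^{2}_{\frac{2n}{n+2s}}\leq \frac{\delta}{2b_{n,s}^{2}}\,\|g\|^{2}_{\frac{2n}{n+2s}},
\]
while by hypothesis $\|r_0\|^{2}_{\frac{2n}{n+2s}}\geq \delta\|g\|^{2}_{\frac{2n}{n+2s}}$.

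To interpolate continuously between these regimes I would concatenate the Brock rearrangement flows connecting $Ug_k$ to $g_{k+1}=\mathcal{R}Ug_k$ for $k=0,\dots,K_0-1$, with $U$ applied instantaneously at the joints. This yields a path $\tau\mapsto\tilde g_\tau$ on a parameter interval $[0,T]$, continuous in $L^{\frac{2n}{n+2s}}$ within each Brock segment, along which $\|\tilde g_\tau\|_{\frac{2n}{n+2s}}$ is constantly equal to $\|g\|_{\frac{2n}{n+2s}}$ and $\mathcal{D}(\tilde g_\tau)$ is non-increasing. For each $\tau$ pick any admissible decomposition $\tilde g_\tau=\phi_\tau+r_\tau$ and set $\rho(\tau):=\|r_\tau\|^{2}_{\frac{2n}{n+2s}}$. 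By Lemma~\ref{comparable} any other admissible choice alters $\rho(\tau)$ by a factor in $[b_{n,s}^{-2},b_{n,s}^{2}]$, and by Lemma~\ref{continuity} any sequential limit along the Brock flow supplies a valid decomposition of the limit function. These two facts combine to produce a ``crossing time'' $\tau^{\ast}\in[0,T]$ at which
\[
\frac{\delta}{2b_{n,s}^{4}}\|g\|^{2}_{\frac{2n}{n+2s}}\leq \rho(\tau^{\ast})\leq \delta\,\|\tilde g_{\tau^{\ast}}\|^{2}_{\frac{2n}{n+2s}},
\]
so that $\tilde g_{\tau^{\ast}}$ lies in the local regime of Lemma~\ref{local stability}.

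Applying the local stability at $\tau^{\ast}$ gives
\[
\mathcal{D}(\tilde g_{\tau^{\ast}})\geq \frac{C_s}{n}\inf_{h\in\mathcal{M}_{HLS}}\|\tilde g_{\tau^{\ast}}-h\|^{2}_{\frac{2n}{n+2s}}\geq \frac{C_s}{n\,b_{n,s}^{2}}\,\rho(\tau^{\ast})\geq \frac{C_s\,\delta}{2n\,b_{n,s}^{6}}\,\|g\|^{2}_{\frac{2n}{n+2s}}.
\]
The deficit monotonicity yields $\mathcal{D}(g)\geq \mathcal{D}(\tilde g_{\tau^{\ast}})$, while the $H^{-s}$-Pythagorean identity at the minimizer $\phi_0$ (which follows from the first-order optimality together with the scale-invariance of $\mathcal{M}_{HLS}$) combined with the HLS inequality gives $\|\phi_0\|_{\frac{2n}{n+2s}}\leq\|g\|_{\frac{2n}{n+2s}}$, hence $\inf_{h\in\mathcal{M}_{HLS}}\|g-h\|^{2}_{\frac{2n}{n+2s}}\leq \|r_0\|^{2}_{\frac{2n}{n+2s}}\leq 4\|g\|^{2}_{\frac{2n}{n+2s}}$. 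Since $b_{n,s}$ is uniformly bounded as $n\to\infty$, rearranging the two displays produces the desired bound $\mathcal{D}(g)\geq \frac{C_s}{n}\inf_{h\in\mathcal{M}_{HLS}}\|g-h\|^{2}_{\frac{2n}{n+2s}}$ with a new constant $C_s$. The main obstacle is precisely that $\rho(\tau)$ need not be a continuous function of $\tau$, because the $H^{-s}$-closest point $\phi_\tau$ is not unique and can jump even when $\tilde g_\tau$ varies continuously; the classical intermediate value argument of \cite{DEFFL,CLT2} therefore breaks down, and is replaced by the quantitative weak-continuity statement extracted from Lemmas~\ref{comparable} and~\ref{continuity}, which controls the jumps of $\rho$ by the bounded factor $b_{n,s}^{2}$ and suffices to locate $\tau^{\ast}$.
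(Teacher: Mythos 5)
The overall strategy matches the paper's: Carlen--Loss competing symmetries, the Brock flow, Lemma~\ref{strong} to drive $\|r_k\|_{\frac{2n}{n+2s}}$ to zero, a first-crossing-time argument controlled by Lemmas~\ref{comparable} and~\ref{continuity}, the local stability at the crossing, and the deficit monotonicity together with a bound of the form $\inf_{h\in\mathcal{M}_{HLS}}\|g-h\|^2_{\frac{2n}{n+2s}}\lesssim\|g\|^2_{\frac{2n}{n+2s}}$ to close. The paper runs the crossing on the single Brock segment joining $Ug_{k_0}$ to $g_{k_0+1}$ (where $k_0$ is the last iterate at which every decomposition still has large residual); your concatenated path works too, but you should say explicitly that at the instantaneous $U$-joints the quantity $\rho$ does not jump (because $U$ is an isometry of $L^{\frac{2n}{n+2s}}$ and of the $H^{-s}$ functional and preserves $\mathcal{M}_{HLS}$), so the crossing is forced to occur inside a Brock segment, where Lemma~\ref{continuity} is available.

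There is, however, one step that would fail as written. In the final display you assert
\begin{equation*}
\inf_{h\in\mathcal{M}_{HLS}}\|\tilde g_{\tau^{\ast}}-h\|^{2}_{\frac{2n}{n+2s}}\ \geq\ \frac{1}{b_{n,s}^{2}}\,\rho(\tau^{\ast}),
\end{equation*}
citing Lemma~\ref{comparable}. But Lemma~\ref{comparable} only compares $\|r\|_{\frac{2n}{n+2s}}$ across different \emph{$H^{-s}$-minimizing} decompositions of the same function; it gives no lower bound on the genuine $L^{\frac{2n}{n+2s}}$-distance to $\mathcal{M}_{HLS}$, whose nearest point need not be an $H^{-s}$-minimizer, and in general $\inf_{h}\|\tilde g_{\tau^*}-h\|_{\frac{2n}{n+2s}}$ can be strictly smaller than any $\|r_{\tau^*}\|_{\frac{2n}{n+2s}}$. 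The correct route --- and the one the paper effectively uses --- is to invoke the \emph{proof} of the local stability, namely Lemma~\ref{local sta}, which yields the stronger conclusion $\mathcal{D}(\tilde g_{\tau^*})\geq \frac{C_s}{n}\|r_{\tau^*}\|^2_{\frac{2n}{n+2s}}=\frac{C_s}{n}\rho(\tau^*)$ directly from $I_1,I_2,I_3\geq 0$ (the remainder there is $\gtrsim\theta\|r\|_p^2$, not merely $\gtrsim\theta\inf_h\|g-h\|_p^2$). With that substitution the offending inequality is bypassed and the rest of your chain closes as you intended, after which the bound $\inf_h\|g-h\|^2_{\frac{2n}{n+2s}}\leq 4\|g\|^2_{\frac{2n}{n+2s}}$ (or, more simply as in the paper, $\leq\|g\|^2_{\frac{2n}{n+2s}}$ by taking $h=0\in\mathcal{M}_{HLS}$) finishes the proof.
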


\begin{proof}

Let $g_k=(\mathcal{R}U)^kg$ (see the definition of $\mathcal{R}U$ in Section 2). It is well known that $$k\rightarrow\|(-\Delta)^{-\frac{s}{2}}g_k\|^2_2$$
is increasing by the Riesz rearrangement inequality and $\|g_k\|_{\frac{2n}{n+2s}}=\|g\|_{\frac{2n}{n+2s}}$.
Thus
\begin{align}\label{est of hls}\nonumber
&\frac{\|g\|^2_{\frac{2n}{n+2s}}-S_{n,s}\|(-\Delta)^{-s/2}g\|_2^2}{\inf\limits_{h\in \mathcal{M}_{HLS}}\|g-h\|_{\frac{2n}{n+2s}}^2}\geq \frac{\|g\|^2_{\frac{2n}{n+2s}}-S_{n,s}\|(-\Delta)^{-s/2}g\|_2^2}{\|g\|^2_{\frac{2n}{n+2s}}}\\
&\geq  1-\frac{S_{n,s}\|(-\Delta)^{-s/2}g\|_2^2}{\|g\|^2_{\frac{2n}{n+2s}}}\geq \frac{\|g_k\|^2_{\frac{2n}{n+2s}}-S_{n,s}\|(-\Delta)^{-s/2}g_k\|_2^2}{\|g_k\|^2_{\frac{2n}{n+2s}}}.
\end{align}
Let $g_k=\phi_k+r_k$ with $\phi_k$ being any extremal of $\inf\limits_{h\in \mathcal{M}_{HLS}}\|(-\Delta)^{-\frac{s}{2}}(g_k-h)\|_{2}^2$.
From Lemma \ref{strong}, we know that
$$\lim\limits_{k\rightarrow +\infty}\|r_k\|_{\frac{2n}{n+2s}}=0.$$
Then it follows that
there exist a $k_0\in \mathbb{N}$ such that for all the decompositions $g_{k_0}=\phi_{k_0}+r_{k_0}$ of $g_{k_0}$, there must holds
\begin{equation}\label{uniform}
\|r_{k_0}\|_{\frac{2n}{n+2s}}>\delta\|g_{k_0}\|^2_{\frac{2n}{n+2s}}
\end{equation}
  and there exist one decomposition $g_{k_0+1}=\phi_{k_0+1}+r_{k_0+1}$ with
 $$\|r_{k_0+1}\|_{\frac{2n}{n+2s}}\leq \delta\|g_{k_0+1}\|^2_{\frac{2n}{n+2s}}.$$
 Denote $g_0=Ug_{k_0}$, $g_\infty=g_{k_0+1}$, then
 $$\inf\limits_{h\in \mathcal{M}_{HLS}}\|g_{0}-h\|^2_{\frac{2n}{n+2s}}=\inf\limits_{h\in \mathcal{M}_{HLS}}\|g_{k_0}-h\|^2_{\frac{2n}{n+2s}}
 >\delta\|g_{k_0}\|^2_{\frac{2n}{n+2s}}=\delta\|g_{0}\|^2_{\frac{2n}{n+2s}}.$$
 Now using the continuous rearrangement flow $g_\tau$ ($0\leq\tau\leq\infty$) introduced in Section 2, we conclude that $g_\tau$
 satisfies
$$\|(-\Delta)^{-\frac{s}{2}}g_\tau\|_{2}^2\geq \|(-\Delta)^{-\frac{s}{2}}g_0\|_{2}^2=\|(-\Delta)^{-\frac{s}{2}}g_{k_0}\|_{2}^2, ~\text{and}~\|g_\tau\|_{\frac{2n}{n+2s}}=\|g_0\|_{\frac{2n}{n+2s}}=\|g\|_{\frac{2n}{n+2s}}.$$

Now assume that $g_{\tau}=\phi_{\tau}+r_{\tau}$ with $\phi_{\tau}\in \mathcal{M}_{HLS}$ being an extremal of $\inf\limits_{h\in \mathcal{M}_{HLS}}\|g_\tau-h\|^2_{\frac{2n}{n+2s}}$ and define
$$\tau_0=\inf\{\tau\geq0:\text{there exist a decomposition}~~ g_\tau=\phi_\tau+r_\tau ~~\text{with}~~ \|r_\tau\|_{\frac{2n}{n+2s}}\leq \delta\|g_\tau\|_{\frac{2n}{n+2s}}\}.$$
If $\tau_0=0$, then there exists a sequence $\tau_k\rightarrow 0$ such that $g_{\tau_k}$ has a decomposition $g_{\tau_k}=\phi_{\tau_k}+r_{\tau_k}$ with
$$\|r_{\tau_k}\|_{\frac{2n}{n+2s}}\leq \delta\|g_{\tau_k}\|_{\frac{2n}{n+2s}}.$$
By Lemma \ref{continuity}, we know there exist a subsequence (still use the notation $\tau_k$)  $\tau_k\rightarrow 0$ such that $\phi_{\tau_k}\rightarrow \phi_0$, $r_{\tau_k}\rightarrow r_0$ in $L^{\frac{2n}{n+2s}}(\mathbb{R}^n)$
, $\phi_0$ being an extremal of $\inf\limits_{h\in M_{HLS}}\|g_0-h\|^2_{\frac{2n}{n+2s}}$ and $\|r_{0}\|_{\frac{2n}{n+2s}}\leq \delta\|g\|_{\frac{2n}{n+2s}}$, which is a contradiction with \eqref{uniform}.
\vskip 0.1cm

If $0<\tau_0\leq \infty$, by the definition of $\tau_0$ we know when $\tau<\tau_0$ for all the decompositions $g_\tau=\phi_\tau+r_\tau$, there must hold
$$\|r_\tau\|_{\frac{2n}{n+2s}}\geq \delta\|g_\tau\|_{\frac{2n}{n+2s}}.$$
Then by Lemma \ref{continuity} again, there is a subsequence $\tau_k\rightarrow \tau_0^{-}$ such that $g_{\tau_0}$ with a decomposition $g_{\tau_0}=\phi_{\tau_0^{-}}+r_{\tau_0^{-}}$ and $\|r_{\tau_0^{-}}\|_{\frac{2n}{n+2s}}\geq \delta\|g_{\tau_0}\|_{\frac{2n}{n+2s}}$.
At the same time by the definition of $\tau_0$ and Lemma \ref{continuity}, there is a subsequence $\tau_k^\prime\rightarrow \tau_0^{+}$ such that $g_{\tau_0}$ with another decomposition $g_{\tau_0}=\phi_{\tau_0^{+}}+r_{\tau_0^{+}}$ and
\begin{equation*}
\|r_{\tau_0^{+}}\|_{\frac{2n}{n+2s}}\leq \delta\|g_{\tau_0}\|_{\frac{2n}{n+2s}}.
\end{equation*}
Thus Lemma \ref{comparable} tells us
\begin{equation}\label{local}
\|r_{\tau_0^{+}}\|_{\frac{2n}{n+2s}}\geq b_{n,s}^{-1}\|r_{\tau_0^{-}}\|_{\frac{2n}{n+2s}}\geq b_{n,s}^{-1}\delta\|g_{\tau_0}\|_{\frac{2n}{n+2s}}
\end{equation}
since $b_{n,s}^{-1}\geq (1+2^{2+1/2}3^{-1/2})^{-1}$ when $n\geq 14s$.
Therefore by (\ref{est of hls}) and (\ref{local}), there holds
\begin{align*}
&\frac{\|g\|^2_{\frac{2n}{n+2s}}-S_{n,s}\|(-\Delta)^{-s/2}g\|_2^2}{\inf\limits_{h\in \mathcal{M}_{HLS}}\|g-h\|_{\frac{2n}{n+2s}}^2}\\
&\geq \frac{\|g_0\|^2_{\frac{2n}{n+2s}}-S_{n,s}^{-1}\|(-\Delta)^{-s/2}g_0\|_2^2}{\|g_0\|^2_{\frac{2n}{n+2s}}}\\
&\geq \frac{\|g_{\tau_0}\|^2_{\frac{2n}{n+2s}}-S_{n,s}^{-1}\|(-\Delta)^{-s/2}g_{\tau_0}\|_2^2}{\|g_{\tau_0}\|^2_{\frac{2n}{n+2s}}}\\
&\geq  (1+2^{2+1/2}3^{-1/2})^{-1}\delta \frac{\|g_{\tau_0}\|^2_{\frac{2n}{n+2s}}-S_{n,s}^{-1}\|(-\Delta)^{-s/2}g_{\tau_0}\|_2^2}{\|r_{\tau_0^{+}}\|_{\frac{2n}{n+2s}}^2}.
\end{align*}
Then by the local stability Lemma \ref{local stability}, we accomplish the proof of Lemma \ref{nonlocal stability}.

\end{proof}
To summarize, by Lemma \ref{local stability}, Lemma \ref{nonlocal stability} and Lemma \ref{constant} we have proved the stability of HLS inequalities with the optimal asymptotic lower bounds, namely we have completed the proof of Theorem \ref{sta of hls}.

\section{Stability of Sobolev inequalities with the optimal asymptotic lower bounds}
In this section we will set up the stability of the Sobolev inequalities with optimal asymptotic lower bounds from the stability of HLS inequality with optimal asymptotic lower bounds by adapting the strategy of the author's previous work~\cite{CLT1}. We give the details here for the sake of completeness.
\vskip 0.1cm

Let $f\in \dot{H}^s(\mathbb{R}^n)$ and denote $2^{\ast}_s=\frac{2n}{n-2s}$. Define $$\mathcal{F}(f)=\mathcal{S}_{s,n}^{-1}\|(-\Delta)^{s/2}f\|_2^2,~~~~\mathcal{E}(f)=\|f\|^2_{2^{\ast}_s}.$$
Then the Legendre transform $\mathcal{F}^{\ast}$ of a convex functional $\mathcal{F}: \dot{H}^s(\mathbb{R}^n)\rightarrow [0, +\infty)$ defined on $\dot{H}^{-s}(\mathbb{R}^n)$ is given by $$\mathcal{F}^{\ast}(g)=\sup_{f\in H^s(\mathbb{R}^n)}\{2\int_{\mathbb{R}^n}f(x)g(x)dx-\mathcal{F}(f)\}.$$
A simple calculation gives $\mathcal{F}^{\ast}(g)=\mathcal{S}_{s,n}\|(-\Delta)^{-s/2}g\|_2^2$. Similarly, the Legendre transform $\mathcal{E}^{\ast}$ of a convex functional $\mathcal{E}: L^{2^{*}_s}(\mathbb{R}^n)\rightarrow [0, +\infty)$ defined on $L^{\frac{2n}{n+2s}}(\mathbb{R}^n)$ is given by $$\mathcal{E}^{\ast}(g)=\sup_{f\in L^{2^{*}_s}(\mathbb{R}^n)}\{2\int_{\mathbb{R}^n}f(x)g(x)dx-\mathcal{E}(f)\}.$$
Obviously, $\mathcal{E}^{\ast}(g)=\|g\|^2_{\frac{2n}{n+2s}}$. Choose $g=\|f\|_{2^\ast}^{2-2^\ast_s}|f|^{2^\ast_s-1}sgn(f)$ and $f_1=\mathcal{S}_{s,n}(-\Delta)^{-s}g$, we can check that
\begin{align}\label{equation3}
\mathcal{E}(f)+\mathcal{E}^\ast(g)=2\int_{\mathbb{R}^n}fgdx
\end{align}
and
\begin{align}\label{equation4}\nonumber
&\mathcal{F}(f)=\mathcal{S}_{s,n}^{-1}\|(-\Delta)^{s/2}f\|_2^2\\\nonumber
& =\mathcal{S}_{s,n}^{-1}\|(-\Delta)^{s/2}f_1\|_2^2+\mathcal{S}_{s,n}^{-1}\|(-\Delta)^{s/2}(f-f_1)\|_2^2+2\int_{\mathbb{R}^n}(f-f_1)(\mathcal{S}_{s,n}^{-1}(-\Delta)^{s}f_1)dx\\\nonumber
& =\mathcal{S}_{s,n}\|(-\Delta)^{-s/2}g\|_2^2+2\int_{\mathbb{R}^n}(f-\mathcal{S}_{s,n}(-\Delta)^{-s}g)gdx+\mathcal{S}_{s,n}^{-1}\|(-\Delta)^{s/2}(f-f_1)\|_2^2\\
& =2\int_{\mathbb{R}^n}fgdx-\mathcal{F}^{\ast}(g)+\mathcal{S}_{s,n}^{-1}\|(-\Delta)^{s/2}(f-f_1)\|_2^2.
\end{align}
Combining (\ref{equation3}) with (\ref{equation4}), we have
\begin{align}\label{equation of deficit}
\mathcal{F}(f)-\mathcal{E}(f)= \mathcal{E}^\ast(g)-\mathcal{F}^\ast(g)+\mathcal{S}_{s,n}^{-1}\|(-\Delta)^{s/2}(f-f_1)\|_2^2.
\end{align}
Since we have already proved the stability of HLS inequality in Theorem \ref{sta of hls}
$$\mathcal{E}^\ast(g)-\mathcal{F}^\ast(g)\geq \frac{C_s}{n}\inf\limits_{h\in M_{HLS}}\|g-h\|^2_{\frac{2n}{n+2s}},$$
then for any $\epsilon>0$, there exists
a $g_0\in \mathcal{M}_{HLS}$ such that
\begin{align}\label{est of hls def}
\mathcal{E}^\ast(g)-\mathcal{F}^\ast(g)\geq \frac{C_s}{n}\|g-g_0\|^2_{\frac{2n}{n+2s}}-\varepsilon.
\end{align}
By (\ref{equation of deficit}), (\ref{est of hls def}), sharp HLS inequality and the fact $(-\Delta)^{-s/2}g_0\in \mathcal{M}_S$, we derive
\begin{align*}
& \mathcal{F}(f)-\mathcal{E}(f)\geq \frac{C_s}{n}\mathcal{S}_{s,n}\|(-\Delta)^{-s/2}(g-g_0)\|_2^2-\varepsilon+\mathcal{S}_{s,n}^{-1}\|(-\Delta)^{s/2}(f-f_1)\|_2^2\\
& =\frac{C_s}{n}\|\mathcal{S}_{s,n}^{1/2}(-\Delta)^{-s/2}(g-g_0)\|_2^2-\varepsilon+\|\mathcal{S}_{s,n}^{-1/2}(-\Delta)^{s/2}f-\mathcal{S}_{s,n}^{1/2}(-\Delta)^{-s/2}g\|_2^2\\
& \geq \frac{C_s}{2n}\|\mathcal{S}_{s,n}^{-1/2}(-\Delta)^{s/2}f-\mathcal{S}_{s,n}^{1/2}(-\Delta)^{-s/2}g_0\|_2^2-\varepsilon\\
& \geq \frac{\mathcal{S}_{s,n}^{-1}C_s}{2n}\inf_{h\in M_S}\|(-\Delta)^{s/2}(f-h)\|_2^2-\varepsilon,
\end{align*}
which means
$$\left\| (-\Delta)^{s/2} f \right\|_2^2-\mathcal S_{s,n} \|f\|_{\frac{2n}{n-2s}}^2\geq \frac{\beta_{s}}{n} \inf_{h\in\mathcal{ M}_s}\|(-\Delta)^{s/2}(f-h)\|_2^2.$$

\section{Appendix}
In this section, we will prove some lemmas which were used in our proof of the main Theorem.
\subsection{Decomposition}
First we prove the following decomposition lemma.

\begin{lemma}\label{decompostition}
For any $0 \leq g\in L^{\frac{2n}{n+2s}}(\mathbb{S}^n)$, there exists a $\phi\in M_{HLS} $ such that
$$\inf_{h\in M_{HLS}}\langle\mathcal{P}_{2s}(g-h),g-h\rangle=\langle\mathcal{P}_{2s}(g-\phi),g-\phi\rangle.$$
Moreover, denote $g=\phi+r$ and $\phi=c_{0}J_{\Phi}^{\frac{n+2s}{2n}}$, where $\Phi$ is a conformal transformation on the sphere, then
$$r\bot \text{span}\{J_{\Phi}^{1/2}Y_{0},J_{\Phi}^{1/2}Y_{1,i}\circ \Phi,~~i=1,\cdots,n+1\}.$$
\end{lemma}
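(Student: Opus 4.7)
The plan is to split the proof into two parts: existence of a nearest element $\phi\in M_{HLS}$ to $g$ in the $H^{-s}$ distance, and derivation of the orthogonality of the residual $r=g-\phi$ to the tangent space of $M_{HLS}$ at $\phi$ via the first-order optimality condition.

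First I would establish existence. The manifold $M_{HLS}$ is parametrized by $(c,\xi)\in\mathbb{R}\times B^{n+1}$, and crucially $c=0$ is allowed, so $0\in M_{HLS}$. For fixed conformal transformation $\Phi$, minimizing $\langle\mathcal{P}_{2s}(g - c\,J_\Phi^{(n+2s)/(2n)}),\,g - c\,J_\Phi^{(n+2s)/(2n)}\rangle$ in $c$ is an explicit quadratic; using the conformal covariance of $\mathcal{P}_{2s}$ together with $\int J_\Phi\,d\sigma = 1$ one gets $\|J_\Phi^{(n+2s)/(2n)}\|_{H^{-s}}^2 = 1$, so after optimizing in $c$ the reduced functional becomes $\|g\|_{H^{-s}}^2 - |\langle g, J_\Phi^{(n-2s)/(2n)}\rangle|^2$. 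Existence thus reduces to attainment of $\sup_\Phi|\langle g, J_\Phi^{(n-2s)/(2n)}\rangle|^2$. In the concentration limit $|\xi|\to 1$, the bump $J_\Phi^{(n-2s)/(2n)}$ (of unit $L^{2n/(n-2s)}$-norm) concentrates at a point of $\mathbb{S}^n$ and $J_\Phi\,d\sigma$ weakly-$*$ escapes, so that $\langle g, J_\Phi^{(n-2s)/(2n)}\rangle\to 0$ for every $g\in L^{2n/(n+2s)}$ by approximating $g$ by continuous functions. Consequently the supremum is either zero (in which case $\phi=0\in M_{HLS}$ minimizes) or is attained in the interior, yielding the desired $\phi$.

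Next I would derive the orthogonality from first-order optimality at $\phi = c_0\,J_\Phi^{(n+2s)/(2n)}$. Introduce the representation $\rho(\Psi)f := J_\Psi^{(n+2s)/(2n)}\,f\circ\Psi$ of the M\"obius group, which is isometric on $H^{-s}(\mathbb{S}^n)$ in view of the covariance $\mathcal{P}_{2s}\,\rho(\Psi)f = J_\Psi^{(n-2s)/(2n)}\,(\mathcal{P}_{2s}f)\circ\Psi$. Writing $\phi = c_0\,\rho(\Phi)(\mathbf{1})$, the tangent space of $M_{HLS}$ at $\phi$ is spanned by $\rho(\Phi)(\mathbf{1}) = J_\Phi^{(n+2s)/(2n)} Y_0$ (from varying $c_0$) together with $c_0\,\rho(\Phi)(d\rho(X)\mathbf{1})$ as $X$ ranges over the conformal Lie algebra. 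A direct computation gives $d\rho(X)(\mathbf{1}) = \frac{n+2s}{2n}\,\mathrm{div}(X)$, which vanishes for rotational $X$ and equals (up to constants) the first-degree spherical harmonic $Y_{1,i}$ for each of the $n+1$ boost-type conformal Killing fields. Thus the tangent vectors at $\phi$ are $J_\Phi^{(n+2s)/(2n)} Y_0$ and $J_\Phi^{(n+2s)/(2n)}(Y_{1,i}\circ\Phi)$, $i=1,\dots,n+1$. Setting $\langle\mathcal{P}_{2s}r,\delta\phi\rangle = 0$ along each such direction and using self-adjointness of $\mathcal{P}_{2s}$ together with the Funk-Hecke eigenvalue identity $\mathcal{P}_{2s}Y_{l,m} = A_{n,s}(l)Y_{l,m}$ and the covariance relation once more, one obtains the claimed orthogonality of $r$ against the stated span (the Jacobian weight in the lemma arising from applying $\mathcal{P}_{2s}$ to each tangent vector).

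The main obstacle is the existence step, specifically ruling out the escape of a minimizing sequence to the concentration boundary $|\xi|\to 1$ in the parameter space. The key observation is that $0\in M_{HLS}$, which lets one handle the degenerate limit cleanly without invoking the full concentration-compactness machinery: either the relevant supremum is already zero and $\phi=0$ suffices, or a minimizer is forced into the interior. The orthogonality step is then largely bookkeeping once the representation $\rho$ of the conformal group has been introduced, since the tangent space to $M_{HLS}$ is directly identified with the span of degree-$0$ and degree-$1$ spherical harmonics transported by $\rho(\Phi)$.
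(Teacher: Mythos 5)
Your proof is correct, but it takes a genuinely different route from the paper's on both steps. For existence, the paper argues by compactness and Fatou's lemma: it takes a minimizing sequence $(c_k,\xi_k)$, shows $|c_k|$ stays bounded from the quadratic structure of the functional, passes to a subsequential limit, and invokes Fatou when $|\xi_0|=1$ to conclude that $\phi=0$ then achieves the infimum. You instead optimize the quadratic in $c$ explicitly (using $\mathcal{P}_{2s}(J_\Phi^{(n+2s)/(2n)})=J_\Phi^{(n-2s)/(2n)}$ and $\int J_\Phi\,d\sigma=1$), reducing the whole problem to maximizing $|\langle g, J_\Phi^{(n-2s)/(2n)}\rangle|$ over the bubble parameter, and then use weak convergence in $L^{2n/(n-2s)}$ to show this quantity vanishes at the boundary of the parameter ball; this gives a cleaner dichotomy (attained in the interior, or identically zero with $\phi=0$) without invoking Fatou. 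For the orthogonality, the paper characterizes the tangent vectors implicitly by differentiating the Euler--Lagrange equation for HLS extremals, conjugating by $\Phi^{-1}$, expanding in spherical harmonics, and finishing with a dimension count. You compute the tangent space directly from the infinitesimal conformal-group action $d\rho(X)\mathbf{1}=\tfrac{n+2s}{2n}\,\mathrm{div}(X)$, which identifies the tangent directions at once as the $\rho(\Phi)$-transported degree-$0$ and degree-$1$ spherical harmonics --- the more conceptual route, and it avoids the (somewhat fragile) dimension-count step. One remark: tracking the covariance of $\mathcal{P}_{2s}$ through your argument produces the Jacobian weight $J_\Phi^{(n-2s)/(2n)}$ in the final $L^2$-orthogonality, which is the correct exponent; the $J_\Phi^{1/2}$ printed in the lemma's statement appears to be a typo, and your computation would have caught this had you carried the bookkeeping to the end.
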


\begin{proof}
First we claim that the infimum can be obtained. Denote by
$$F(c,\xi)=\langle\mathcal{P}_{2s}(g-v_{c,\xi}),g-v_{c,\xi}\rangle,$$ where $v_{c,\xi}=c\left(\frac{\sqrt{1+|\xi|^2}}{1-\xi\cdot \eta}\right)^{\frac{n+2s}{2}}$, then $F(c,\xi)$ is a continuous function on $\mathbb{R}\times B^n$, where
$B^{n+1}=\{x\in \mathbb{R}^{n+1}:|x|<1\}$. Assume $c_k\in \mathbb{R}$ and $\xi_{k}\in B^{n+1}$ such that
$$\lim_{k\rightarrow \infty}F(c_k,\xi_k)=\inf_{h\in M_{HLS}}\langle\mathcal{P}_{2s}(g-h),g-h\rangle.$$
Since
$$F(c_k,\xi_k)=(c_k-\langle\mathcal{P}_{2s}g,v_{c_k,\xi_k}\rangle)^2+\langle\mathcal{P}_{2s}(g),g\rangle-\langle\mathcal{P}_{2s}g,v_{c_k,\xi_k}\rangle^2,$$
and $\langle\mathcal{P}_{2s}g,v_{1,\xi_k}\rangle$ is bounded  by the Hardy-Littlewood-Sobolev inequality then we know
$|c_k|$ is bounded.
Then there exist $c_0\in \mathbb{R}$
and $\xi_0\in \mathbb{R}^{n+1}$ with $|\xi_0|\leq 1$ satisfying $(c_k,\xi_k)\rightarrow(c_0,\xi_0)$ (up to a subsequence). Now if $|\xi_0|<1$, by the continuity of $F(c,\xi)$ we obtain
$$F(c_0,\xi_0)=\inf_{h\in M_{HLS}}\langle\mathcal{P}_{2s}(g-h),g-h\rangle.$$
If $|\xi_0|=1$, by the Fatou lemma we have
\begin{align*}
& \langle\mathcal{P}_{2s}(g),g\rangle=\int_{\mathbb{S}^n}\liminf_{k\rightarrow \infty}\mathcal{P}_{2s}(g-c_k(\frac{\sqrt{1-|\xi_k|^2}}{1-\xi_k\cdot\omega})^{\frac{n-2s}{2}})(g-c_k(\frac{\sqrt{1-|\xi_k|^2}}{1-\xi_k\cdot\omega})^{\frac{n-2s}{2}})d\sigma_\xi\\
& \leq \liminf_{k\rightarrow \infty}\langle\mathcal{P}_{2s}(g-c_k(\frac{\sqrt{1-|\xi_k|^2}}{1-\xi_k\cdot\omega})^{\frac{n-2s}{2}}),g-c_k(\frac{\sqrt{1-|\xi_k|^2}}{1-\xi_k\cdot\omega})^{\frac{n-2s}{2}}\rangle\\
& =\inf_{h\in M_{HLS}}\langle\mathcal{P}_{2s}(g-h),g-h\rangle \leq \langle\mathcal{P}_{2s}(g),g\rangle,
\end{align*}
which means we can choose $c=0$ and any $|\xi|<1$ such that
$$\langle\mathcal{P}_{2s}(g-c(\frac{\sqrt{1-|\xi|^2}}{1-\xi\cdot\omega})^{\frac{n-2s}{2}}),g-c(\frac{\sqrt{1-|\xi|^2}}{1-\xi\cdot\omega})^{\frac{n-2s}{2}}\rangle
 =\inf_{h\in M_{HLS}}\langle\mathcal{P}_{2s}(g-h),g-h\rangle.$$
Then we complete the proof of the claim.
\vskip0.1cm

Next let $\phi=v_{c_{0},\xi_0}=c_0J_{\Phi}^{\frac{n+2s}{2n}}\in M_{HLS} $ such that
$$\inf_{h\in M_{HLS}}\langle\mathcal{P}_{2s}(g-h),g-h\rangle=\langle\mathcal{P}_{2s}(g-\phi),g-\phi\rangle,$$
then we have
$$r\bot \text{span}\{v_{1,\xi_0},\partial_\xi v_{c,\xi}|_{(c_0, \xi_0)} \}.$$
in the ``inner product" $\langle\mathcal{P}_{2s}\cdot,\cdot\rangle$. Next let us prove
$$\text{span}\{v_{1,\xi_0},\partial_\xi v_{c,\xi}|_{(c_0, \xi_0)} \}=\text{span} \{J_{\Phi}^{1/2}Y_{0,1}, J_{\Phi}^{1/2}Y_{1,i}\circ \Phi,~~i=1,\cdots,n+1\}.$$

Since $v_{c,\xi}$ is the extremal function of the
HLS, then it must satisfy the following Euler-Lagrange equation
$$\mathcal{P}_{2s}u=\|u\|_{\frac{2n}{n+2s}}^{\frac{4s}{n+2s}}|u|^{-\frac{4s}{n+2s}}u.$$
Differentiating at $(c_0,\xi_0)$, we know that $v_{1,\xi_0}$ and $\partial_{\xi_{i}} v_{c,\xi}|_{(c_0, \xi_0)}$ for $i=1, 2, \cdot\cdot\cdot, n+1$ satisfying the following equation (we may assume $c_0>0$)
\begin{align}\begin{split}\label{equation1}
& \mathcal{P}_{2s}u=\frac{4s}{n+2s}\|v_{c_0,\xi_0}\|_{\frac{2n}{n+2s}}^{\frac{4s-2n}{n+2s}}\int_{\mathbb{S}^n}v_{c_0,\xi_0}^{\frac{n-2s}{n+2s}}ud\sigma_\xi
v_{c_0,\xi_0}^{\frac{n-2s}{n+2s}}+\frac{n-2s}{n+2s}\|v_{c_0,\xi_0}\|_{\frac{2n}{n+2s}}^{\frac{4s}{n+2s}}v_{c_0,\xi_0}^{-\frac{4s}{n+2s}}u\\
&=\frac{4s}{n+2s}\int_{\mathbb{S}^n}J_\Phi^{\frac{n-2s}{2n}}ud\sigma_\xi J_\Phi^{\frac{n-2s}{2n}}+\frac{n-2s}{n+2s}J_\Phi^{-\frac{2s}{n}}u,
\end{split}\end{align}
which implies that $\text{span}\{v_{1,\xi_0},\partial_\xi v_{c,\xi}|_{(c_0, \xi_0)} \}$ belongs to the solution space of the
equation (\ref{equation1}).
We already know that $v_{c_0,\xi_0}=c_0J_{\Phi}^\frac{n+2s}{2n}$ for some conformal transformation $\Phi$ on $\mathbb{S}^n$. For any solution $u$ of the equation (\ref{equation1}), let $u_{\Phi^{-1}}=J_{\Phi^{-1}}^\frac{n+2s}{2n}u\circ\Phi^{-1}$. Then by the fact
$$\int_{\mathbb{S}^n}\frac{u_{\Phi^{-1}}(\xi)}{|\xi-\eta|^{n-2s}}d\sigma_\xi=J_{\Phi^{-1}}^{\frac{n-2s}{2n}}(\eta)\int_{\mathbb{S}^n}\frac{u(\xi)}{|\xi-\Phi^{-1}(\eta)|^{n-2s}}d\sigma_\xi,$$
we can get
$$\mathcal{P}_{2s}(u_{\Phi^{-1}})=J_{\Phi^{-1}}^{\frac{n-2s}{2n}}\mathcal{P}_{2s}u\circ\Phi^{-1}.$$

Using (\ref{equation1}) and $v_{c_0,\xi_0}=c_0J_{\Phi}^{\frac{n+2s}{2n}}$, we can obtain
\begin{align}\begin{split}\label{equation2}
& \mathcal{P}_{2s}(u_{\Phi^{-1}})=J_{\Phi^{-1}}^{\frac{n-2s}{2n}}\frac{4s}{n+2s}\int_{\mathbb{S}^n}J_\Phi^{\frac{n-2s}{2n}}ud\sigma_\xi J_\Phi^{\frac{n-2s}{2n}}(\Phi^{-1})+J_{\Phi^{-1}}^{\frac{n-2s}{2n}}\frac{n-2s}{n+2s}J_\Phi^{-\frac{2s}{n}}(\Phi^{-1})u(\Phi^{-1})\\
& =\frac{4s}{n+2s}\int_{\mathbb{S}^n}u_{\Phi^{-1}}d\sigma_\xi+\frac{n-2s}{n+2s}u_{\Phi^{-1}}
\end{split}\end{align}
where we use the fact $J_{\Phi^{-1}} J_\Phi(\Phi^{-1})=1$.
Expanding $u_{\Phi^{-1}} \in L^2(\mathbb{S}^n)$ in terms of the spherical harmonics,
\begin{equation}
\label{sph harm exp}\nonumber
u_{\Phi^{-1}}=\sum_{l=0}^{\infty}\sum_{m=1}^{N(n,l)} u_{l,m}Y_{l,m} \qquad \qquad \textrm{with} \qquad u_{l,m}=\int_{\mathbb{S}^n}u_{\Phi^{-1}}Y_{l,m} d \omega,
\end{equation}
by Funk-Hecke formula, we obtain
\begin{equation}\label{P2s} \mathcal{P}_{2s}(u_{\Phi^{-1}})=\sum_{l=0}^{\infty}\sum_{m=1}^{N(n,l)} u_{l,m}\frac{\Gamma(l+n/2-s)}{\Gamma(l+n/2+s)}Y_{l,m}.
\end{equation}
This together with (\ref{equation2}) and (\ref{P2s}) gives that
$$\sum_{l=0}^{\infty}\sum_{m=1}^{N(n,l)} u_{l,m}\frac{\Gamma(n/2+s)}{\Gamma(n/2-s)}\frac{\Gamma(l+n/2-s)}{\Gamma(l+n/2+s)}Y_{l,m}=\frac{4s}{n+2s}u_{0,1}+\frac{n-2s}{n+2s}\sum_{l=0}^{\infty}\sum_{m=1}^{N(n,l)} u_{l,m}Y_{l,m},$$
which implies $u_{l,m}=0$ when $l\geq 2$ since $\frac{\Gamma(n/2+s)}{\Gamma(n/2-s)}\frac{\Gamma(l+n/2-s)}{\Gamma(l+n/2+s)}<\frac{n-2s}{n+2s}$ when $l\geq 2$.
Thus we know that $u_{\Phi^{-1}}$ must be of the form  $$u_{\Phi^{-1}}=cY_{0,1}+\sum_{i=1}^{n+1}c_i Y_{1,i}.$$ So the
dimension of the solution space of equation (\ref{equation1}) is $n+1$. At the same time, the dimension of the space $\text{span}\{v_{1,\xi_0},\partial_\xi v_{c,\xi}|_{(c_0, \xi_0)} \}$ is also $n+1$, which completes the conclusion
$$\text{span}\{v_{1,\xi_0},\partial_\xi v_{c,\xi}|_{(c_0, \xi_0)} \}=\text{span} \{J_{\Phi}^{1/2}Y_{0,1}, J_{\Phi}^{1/2}Y_{1,i}\circ \Phi,~~i=1,\cdots,n+1\}.$$
\end{proof}

\subsection{Proof of Lemma \ref{q-estimate}} In our proof of local stability, Lemma \ref{q-estimate} plays an important role. Let us prove the lemma in this subsection and we need to prove the following
proposition and lemma first.
\begin{proposition}
Given $1<p_0<2$, $M>0$ and $N>\frac{p_0}{2(p_0-1)}$, there are positive constant $C_M$ and $C_{M,N}$ such that for any $0<\gamma\leq M$, $1<p_0\leq p\leq 2$ and $-1<r$, there holds
\begin{equation}\begin{split}\label{Elemetary inq1}
(1+r)^p-1-pr &\geq\frac 1 2p(p-1)(r_1+r_2)^2+2(r_1+r_2)r_3+(1-C_M N^{1-p_0}\ln N\theta)r_3^p\\
&\ \  -(\frac23\gamma\theta r_1^2+C_{M,N}\theta r_2^2)\chi_{\{r\leq M\}}-C_{M,N}\theta M^2\chi_{\{r>M\}}.
\end{split}\end{equation}
\end{proposition}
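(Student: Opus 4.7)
The plan is to prove this pointwise inequality by case analysis on the position of $r$ relative to the thresholds $\gamma$, $M$, and $NM$, viewing the bound as a perturbation in $\theta = 2-p$ of the exact identity $(1+r)^2 - 1 - 2r = r^2 = (r_1 + r_2 + r_3)^2$ which holds at $p=2$ (at which all $\theta$-weighted correction terms on the right-hand side vanish). Setting $\phi_p(r) := (1+r)^p - 1 - pr$, the main tool is the Taylor representation
\[
\phi_p(r) \;=\; p(p-1)\int_0^r (r-s)(1+s)^{p-2}\,ds,
\]
combined with the pointwise lower bound $(1+s)^{-\theta} \geq 1 - \theta\ln(1+s)$, valid for all $s > -1$ by convexity of the exponential; this converts the deviation from the $p=2$ case into an integral linear in $\theta$ and logarithmic in $s$.

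In the first regime $-1 < r \leq \gamma$ only $r_1 = r$ is nonzero, and for $0 \leq r \leq \gamma$ the bound $\ln(1+s) \leq s$ yields $\phi_p(r) \geq \tfrac{1}{2}p(p-1)r^2 - \tfrac{1}{6}p(p-1)\theta\,r^3 \geq \tfrac{1}{2}p(p-1)r^2 - \tfrac{2}{3}\gamma\theta\,r^2$ (using $p(p-1) \leq 2$ for $p \in [p_0,2]$), while for $-\gamma \leq r \leq 0$ one has $(1+s)^{-\theta} \geq 1$ on the integration interval, so $\phi_p(r) \geq \tfrac{1}{2}p(p-1) r^2$ holds without correction. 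In the intermediate regime $\gamma < r \leq M$ with $r_1 = \gamma$, $r_2 = r-\gamma$, $r_3 = 0$, I would combine the estimate at $r = \gamma$ with a direct estimate on the compact interval $[\gamma, M]$; since $\phi_p$ is smooth and uniformly close to $\phi_2$ in $p$ there, $|\phi_p(r) - \tfrac{1}{2}p(p-1)r^2|$ is bounded by $O_M(\theta)(\gamma^2 + (r-\gamma)^2)$, which is absorbed by the $-\tfrac{2}{3}\gamma\theta\,r_1^2$ and $-C_{M,N}\theta\,r_2^2$ corrections with $C_{M,N}$ chosen sufficiently large depending on $M$.

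The main obstacle is the third regime $r > M$, where the target contains the non-quadratic term $(1 - C_M N^{1-p_0}\ln N \cdot \theta)r_3^p$ and the parameter $N$ enters to balance two competing mechanisms. For $M < r \leq NM$ the quantity $r_3 = r - M$ is bounded by $(N-1)M$, and a Taylor argument parallel to Case 2 delivers the claim with an error of order $C_{M,N}\theta M^2$, absorbed by the indicator correction $-C_{M,N}\theta M^2 \chi_{\{r > M\}}$. For $r > NM$ the function $(1+r)^p$ is essentially $r^p \approx r_3^p$, and the precise shortfall must be quantified by substituting $(1+s)^{-\theta} \geq 1 - \theta\ln(1+s)$ into the Taylor remainder; the resulting error scales like $\theta \int_{NM}^{r} \ln(1+s)(1+s)^{p-2}\,ds$, which for the worst exponent $p = p_0$ decays like $\theta \cdot N^{1-p_0}\ln(NM)$ relative to $r_3^p$. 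Careful bookkeeping then separates the $M$- and $N$-dependence to produce the coefficient $C_M N^{1-p_0}\ln N \cdot \theta$ with $C_M$ independent of $N$; keeping $C_M$ and $C_{M,N}$ aligned across all three regimes simultaneously, and in particular ensuring that the $1/N$-type smallness wins against the integrated logarithmic weight exactly when $p$ approaches $p_0$, is the technical heart of the proof.
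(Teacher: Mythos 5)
Your overall plan---a case analysis in $r$ combined with the convexity bound $e^{-x}\geq 1-x$---matches the paper's underlying mechanism, and your Taylor-remainder formula plus $\ln(1+s)\leq s$ is precisely the content of the paper's Lemma~\ref{fund ine}, $(1+r)^p\geq 1+pr+\tfrac12p(p-1)r^2-\theta r_+^3$; your Cases~1 and~3a are sound, and the threshold $NM$ versus the paper's $M+N$ is cosmetic. However, in Case~2 ($\gamma<r\leq M$) the error bound you invoke, $|\phi_p(r)-\tfrac12p(p-1)r^2|\leq C_M\theta\bigl(\gamma^2+(r-\gamma)^2\bigr)$, is true but too lossy to close the argument: with $r_1=\gamma$ the available correction is $\tfrac23\gamma\theta r_1^2=\tfrac23\gamma^3\theta$, and an error term of size $C_M\theta\gamma^2$ cannot be dominated by a $\gamma^3\theta$ quantity once $\gamma$ is small. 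You must retain the \emph{cubic} shortfall $\sim\theta(r_1+r_2)^3$ that your Case~1 already produces, then expand $(\gamma+r_2)^3=\gamma r_1^2+3\gamma r_1 r_2+3\gamma r_2^2+r_2^3$ (using $r_1=\gamma$) and split the cross term by Cauchy--Schwarz, which is exactly how the factor of $\gamma$ multiplying $r_1^2$ in the statement is obtained.

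The more serious gap is Case~3b, $r>NM$. Substituting $(1+s)^{-\theta}\geq 1-\theta\ln(1+s)$ into the Taylor remainder produces the shortfall $p(p-1)\theta\int_{NM}^r(r-s)\ln(1+s)\,ds\sim\theta r^2\ln r$; note your written formula $\theta\int_{NM}^r\ln(1+s)(1+s)^{p-2}\,ds$ is not what that substitution yields and drops the $(r-s)$ weight. Relative to $r_3^p\approx r^{2-\theta}$, the true shortfall is of size $\theta r^{\theta}\ln r$, which diverges as $r\to\infty$ and therefore cannot be absorbed by a fixed multiple $C_MN^{1-p_0}\ln N\cdot\theta\,r_3^p$. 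The obstruction is structural: the Taylor remainder centered at $0$ compares $\phi_p(r)$ to a quadratic, and this comparison is fundamentally lossy once $\theta\ln r$ is large; it cannot reach $r_3^p$. The missing step is multiplicative. Write $(1+r)^p=r_3^p\bigl(1+\tfrac{1+M}{r_3}\bigr)^p$ and apply the cubic Lemma~\ref{fund ine} to the small quantity $\tfrac{1+M}{r_3}$ inside the bracket; after subtracting $(1+r)^2$ one is reduced to lower bounds for the one-variable residuals $t\mapsto pt^{p-1}-2t$ and $t\mapsto\tfrac12p(p-1)t^{p-2}-1$ over $t>N$, and there the very convexity bound $N^{\theta}\leq 1+\theta N^{2-p_0}\ln N$ you invoke finally produces the coefficient $C_M N^{1-p_0}\ln N$. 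You correctly located $p=p_0$ as the worst exponent and $N^{1-p_0}\ln N$ as the target rate, but without first extracting $r_3^p$ multiplicatively, the Taylor-remainder route does not deliver it.
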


\begin{lemma}\label{fund ine}
let $1\leq p\leq 2$, then for all $r\geq -1$, there holds
$$(1+r)^p \geq 1+pr+\frac12p(p-1)r^2-\theta r_{+}^3.$$
\end{lemma}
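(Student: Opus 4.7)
The plan is to set $F(r) := (1+r)^p - 1 - pr - \frac{1}{2}p(p-1)r^2 + \theta r_+^3$ and prove $F \geq 0$ on $[-1, \infty)$. Since $r \mapsto r_+^3$ is $C^2$ at the origin, $F$ is $C^2$ with $F(0) = F'(0) = F''(0) = 0$; the strategy is to control $F'''$ on each half-line $r \geq 0$ and $-1 \leq r \leq 0$ separately and then integrate three times from $0$, exploiting the vanishing of $F$ and its first two derivatives at the origin.

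On $r \geq 0$, one computes $F'''(r) = p(p-1)(p-2)(1+r)^{p-3} + 6\theta$. Because $p \in [1,2]$, the factor $p(p-1)(p-2) \leq 0$, and because $p-3<0$ we have $(1+r)^{p-3} \leq 1$, so $p(p-1)(p-2)(1+r)^{p-3} \geq p(p-1)(p-2)$. Thus
$$F'''(r) \geq p(p-1)(p-2) + 6(2-p) = (2-p)\bigl[6 - p(p-1)\bigr] \geq 0,$$
using $p(p-1) \leq 2 \leq 6$. Iterated integration from $0$ upgrades this to $F'' \geq 0$, then $F' \geq 0$, then $F \geq 0$ on $[0,\infty)$. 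On $-1 \leq r \leq 0$ the cubic correction drops out and $F'''(r) = p(p-1)(p-2)(1+r)^{p-3} \leq 0$ for $p \in [1,2]$; the same integration trick, now noting $F''$ is non-increasing with $F''(0)=0$, yields $F'' \geq 0$, then $F' \leq 0$, then $F \geq 0$ on $(-1,0]$.

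There is no serious obstacle; the argument is an elementary Taylor expansion with a sign check on the third derivative. The only mildly delicate point is verifying that the cubic correction $+\theta r_+^3$ on the positive side is strong enough to dominate the negative Taylor remainder of $(1+r)^p$, which reduces to the trivial estimate $p(p-1) \leq 6$ for $p \in [1,2]$ and leaves a comfortable margin (in fact $p(p-1) \leq 2$).
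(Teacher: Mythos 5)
Your proof is correct and follows essentially the same route as the paper: split at $r=0$; on $[-1,0]$ show the quadratic Taylor remainder of $(1+r)^p$ is already nonnegative (the paper reads $f''(r)=p(p-1)[(1+r)^{p-2}-1]\geq 0$ off the second derivative directly, while you derive it by integrating $F'''\leq 0$ once more — same content); on $[0,\infty)$ the paper writes the triple iterated integral of $(1+x)^{p-3}$, which is exactly your ``compute $F'''$ and integrate three times,'' and both close the estimate with the observation $p(p-1)\leq 2\leq 6$.
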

\begin{proof}
For the case $-1\leq r\leq 0$, let $f(r)=(1+r)^p-1-pr-\frac12p(p-1)r^2$, then $f^{\prime\prime}(r)=p(p-1)[(1+r)^{p-2}-1]\geq 0$ since $p\leq 2$. Then $f^{\prime}(r)\leq f^{\prime}(0)=0$  and $f(r)\geq f(0)=0$ when
 $-1\leq r\leq 0$. For the case $r\geq 0$, using the fact $p\leq 2$ we have
 \begin{equation*}\begin{split}
(1+r)^p-1-pr-\frac12p(p-1)r^2&=p(p-1)(p-2)\int_0^r\int_0^s\int_0^t(1+x)^{p-3}dxdtds\\
&\ \  \geq p(p-1)(p-2)\int_0^r\int_0^s\int_0^tdxdtds\geq \theta r^3.
\end{split}\end{equation*}
\end{proof}
\begin{proof}[Proof of Proposition 1]
First we consider the case $r\leq M$. By Lemma \ref{fund ine} we have
$$(1+r)^p\geq 1+pr+\frac{1}{2}p(p-1)(r_1+r_2)^2-\theta(r_1+r_2)_{+}^3.$$
If $r\leq \gamma$, then $r_2=0$ and (\ref{Elemetary inq1}) follows from $(r_1)_{+}^3\leq \gamma r_1^2\leq \frac{3}{2}\gamma r_1^2$. If $\gamma\leq r \leq M$, then $r_1=\gamma$. Using the fact
$3r_1r_2\leq \frac12r_1^2+\frac92 r_2^2$, we have
$$(r_1+r_2)_{+}^3=\gamma r_1^2+3\gamma r_1r_2+3\gamma r_2^2+r_2^3\leq \frac32\gamma r_1^2+\frac{15}{2}\gamma+M)r_2^2,$$
which implies (\ref{Elemetary inq1}) with $C_{M,N}\geq \frac{17}{2}M$ since $\gamma\leq M$.

Then we consider the case $r>M$. Since $r=M+r_3$, then
$$(1+r)^p-1-pr=(1+r)^p-(1+r)^2+(1+M)^2-1-pM+\theta r_3+r_3^2+2Mr_3.$$
By the fact
$$(1+M)^2-1-pM-\frac{1}{2}M^2=\frac{1}{2}(2-p)(2+(p+1)M)\geq 0,$$
we have
\begin{equation}\begin{split}\label{est 1}
(1+r)^p-1-pr&\geq \frac12p(p-1)M^2+2Mr_3+r_3^2+(1+r)^p-(1+r)^2\\
&\ \   \geq \frac12p(p-1)(r_1+r_2)^2+2(r_1+r_2)r_3+r_3^2+(1+r)^p-(1+r)^2.
\end{split}\end{equation}
So we only need to bound the remaining term $r_3^2+(1+r)^p-(1+r)^2$. We do this separately in the subcases $M<r\leq M+N$ and $r>M+N$, where $N$ is an additional parameter.

If $M<r<M+N$, we have
$$(1+r)^p-(1+r)^2\geq -\theta (1+M+N)^2\ln (1+M+N)=-\theta C_{M,N}^1,$$
and $$r_3^2-r_3^p\geq -\theta N^2\ln N=-\theta C_{N}^1.$$
Inserting this into (\ref{est 1}) and choosing $C_{M,N}\geq M^{-2}(C_{M,N}^1+C_{M}^1)$, we have for $M<r<M+N$
$$(1+r)^p-1-pr\geq \frac12p(p-1)(r_1+r_2)^2+2(r_1+r_2)r_3+r_3^p-C_{M,N}\theta M^2\chi_{\{r>M\}},$$
which imply (\ref{Elemetary inq1}) in the subcase $M<r<M+N$.

At last, let handle the subcase $r\geq M+N$. Now we have $r_3=r-M>N$ and then by lemma (\ref{fund ine}) there holds
\begin{equation*}\begin{split}
(1+r)^p&=(1+M+r_3)^p=r_3^p(1+\frac{1+M}{r_3})^p \\
&\ \   \geq r_3^p+p(1+M)r_3^{p-1}+\frac12p(p-1)r_3^{p-2}(1+M)^2-\theta r_3^{p-3}(1+M)^3\\
&\ \   \geq r_3^p+p(1+M)r_3^{p-1}+\frac12p(p-1)r_3^{p-2}(1+M)^2-\theta N^{p-3}(1+M)^3\\
&\ \    =r_3^p+p(1+M)r_3^{p-1}+\frac12p(p-1)r_3^{p-2}(1+M)^2-\theta C_{M,N}^2,
\end{split}\end{equation*}
which, together with the fact
$$(1+r)^2=(1+M+r_3)^2=r_3^2+2r_3(1+M)+(1+M)^2$$
gives
\begin{equation*}\begin{split}
r_3^2+(1+r)^p-(1+r)^2&\geq r_3^p+(1+M)(pr_3^{p-1}-2r_3)\\
&\ \ +\left( \frac12p(p-1)r_3^{p-2}-1\right)(1+M)^2-\theta C_{M,N}^2.
\end{split}\end{equation*}
Let $h(t)=pt^{-1}-2t^{1-t}$ and $t_0=(\frac{p}{2(p-1)})^{\frac{1}{2-p}}$, then $h(t)$ is decreasing on $(0,t_0]$ and increasing on
$[t_0,\infty)$ and $t_0\leq \frac{p_0}{2(p_0-1)}$ since $1<p_0 \leq p\leq 2$. Then when $t\geq N>\frac{p_0}{2(p_0-1)}$, we get
$$pt^{p-1}-2t\geq (pN^{-1}-2N^{1-p})t^p.$$
On the other hand, since $1-N^{2-p}\geq N^{2-p}\ln N(p-2)\geq N^{2-p_0}\ln N(p-2)$, we have
$$pN^{-1}-2N^{1-p}=(p-2)N^{-1}+2N^{-1}(1-N^{2-p})\geq (p-2)N^{-1}(1+2N^{2-p_0}\ln N).$$
Thus when $t>N$
$$pt^{p-1}-2t\geq (pN^{-1}-2N^{1-p})t^p\geq -\theta N^{-1}(1+2N^{2-p_0}\ln N)t^p.$$

Let $g(t)=\frac12p(p-1)t^{-2}-t^{-p}$ and $t_1=(p-1)^{\frac{1}{2-p}}$, then it is easy to check that $h(t)$ is decreasing on $(0,t_1]$ and increasing on
$[t_1,\infty)$ and $t_1<1$. Then when $t\geq N>t_1$, we get
$$\frac12p(p-1)t^{p-2}-1\geq (\frac12p(p-1)N^{-2}-N^{-p})t^p.$$
Again using the fact $1-N^{2-p}\geq (p-2)$, we have
\begin{equation*}\begin{split}
\frac12p(p-1)N^{-2}-N^{-p}&\geq (\frac12p(p-1)-1)N^{-2}+N^{-2}-N^{-p}\\
&\ \ \geq \frac12(p-2)(p+1)N^{-2}+N^{-2}(p-2)N^{2-p_0}\ln N\\
&\ \ =-\theta N^{-2}(\frac{p+1}{2}+N^{2-p_0}\ln N).
\end{split}\end{equation*}
Thus when $t>N$,
$$\frac12p(p-1)t^{p-2}-1\geq -\theta N^{-2}(\frac{p+1}{2}+N^{2-p_0}\ln N)t^p.$$
Choose $C_M$ and $C_{M,N}^{(3)}$ such that for $N>\frac{2p_0}{p_0-1}$,
$$C_{M,N}^{(3)}=(1+M)N^{-1}(1+2N^{2-p_0}\ln N)+(1+M)^2N^{-2}(\frac{p+1}{2}+N^{2-p_0}\ln N\leq C_M N^{1-p_0}\ln N.$$
Then along with  we have
$$r_3^2+(1+r)^p-(1+r)^2\geq (1-\theta C_M N^{1-p_0}\ln N)r_3^p-\theta C_{M,N}^{(2)}.$$
Now let us choose $C_{M,N}=M^{-2}\max\{C_{M,N}^{(1)}+C_{M,N}^{(1)},C_{M,N}^{(2)},\frac{17}{2}M^3\}$, then we have complete the proof.

\end{proof}

Now we can prove Lemma \ref{q-estimate}.
Since $$p(p-1)r_1r_2=2r_1r_2-(3+\theta)\theta r_1r_2\geq 2r_1r_2-4\theta r_1r_2\geq 2r_1r_2-\frac{\gamma}{2}\theta r_1^2-\frac{8}{\gamma}\theta r_2^2,$$
and
$$C_{M,N}M^2\chi_{r>M}\leq C_{M,N}(M-\gamma)^2\chi_{r>M}\leq 4C_{M,N}r_2^2,$$
we deduce from Proposition 1 that
\begin{equation*}\begin{split}
(1+r)^p-1-pr &\geq \left(\frac12p(p-1)-2\gamma \theta\right)r_1^2+\left(\frac12p(p-1)-5C_{M,N}\theta-\frac{8}{\gamma}\theta\right )r_2^2\\
&\ \ +2r_1r_2+2(r_1+r_2)r_3+(1-C_M N^{1-p_0}\ln N\theta).
\end{split}\end{equation*}
Let us choose $N$ such that $N>\frac{p_0}{2(p_0-1)}$ and $C_M N^{1-p_0}\ln N\leq \varepsilon$. Then the proof is completed.

\bibliographystyle{amsalpha}

\end{document}